\newcommand{\norm}[1]{\left\lVert#1\right\rVert}
\newcommand{\innOm}[2]{(#1,#2)_\Omega}
\newcommand{\innGa}[2]{\langle #1,#2\rangle_\Gamma}
\numberwithin{equation}{section}
\theoremstyle{definition}
\newtheorem{theorem}{Theorem}
\newtheorem{lemma}{Lemma}
\newtheorem{corollary}{Corollary}
\newtheorem{definition}{Definition}
\newtheorem{remark}{Remark}
\begin{document}
	
	\title{An HDG Method for Dirichlet Boundary  Control
		of Convection Dominated Diffusion PDE}%\tnoteref{mytitlenote}}

\author{Gang Chen%
	\thanks{School of Mathematics Sciences, University of Electronic Science and Technology of China, Chengdu, China (\mbox{cglwdm@uestc.edu.cn}).}
	\and
	John~R.~Singler%
	\thanks{Department of Mathematics and Statistics, Missouri University of Science and Technology, Rolla, MO, USA (\mbox{singlerj@mst.edu}).}
	\and
	Yangwen Zhang%
	\thanks{Department of Mathematics Science, University of Delaware, Newark, DE, USA (\mbox{ywzhangf@udel.edu}).}
}

\date{\today}

\maketitle

\begin{abstract}
		We first propose a hybridizable discontinuous Galerkin (HDG) method to approximate the solution of a \emph{convection dominated} Dirichlet boundary control problem. Dirichlet boundary control problems and  convection dominated problems are each very challenging numerically due to solutions with low regularity and sharp layers, respectively. Although there are some numerical analysis  works in the literature on \emph{diffusion dominated} convection diffusion Dirichlet boundary control problems, we are not aware of any existing numerical analysis works  for convection dominated boundary control problems. Moreover, the  existing numerical analysis techniques for convection dominated PDEs are not directly  applicable for the Dirichlet boundary control problem because of the low regularity solutions. In this work, we obtain an optimal a priori error estimate for the control under some conditions on the domain and the desired state. We also present some numerical experiments to illustrate  the performance of the HDG method for convection dominated  Dirichlet boundary control problems. 
\end{abstract}

%%%%%%%%%%%%%%%%%%%%%%%
%%------------------------------------------------------------------------------------------------
\section{Introduction}
Let $\Omega\subset \mathbb{R}^{d}  \ (d= 2,3)$ be a Lipschitz polyhedral domain  with boundary $\Gamma = \partial \Omega$. We consider the following Dirichlet boundary control problem:
\begin{align}
\min J(y,u)=\frac{1}{2}\| y- y_{d}\|^2_{L^{2}(\Omega)}+\frac{\gamma}{2}\|u\|^2_{L^{2}(\Gamma)}, \quad \gamma>0, \label{cost1}
\end{align}
subject to
\begin{equation}\label{Ori_problem}
\begin{split}
-\varepsilon\Delta y+\nabla\cdot(\bm \beta y)+\sigma y &=f\quad\text{in}~\Omega,\\
y&=u\quad \text{on}~\Gamma,
\end{split}
\end{equation}
where $f\in L^2(\Omega), \varepsilon \ll 1$, and we make other assumptions on $\bm \beta $ and  $\sigma$ for our analysis.

%Dirichlet boundary control problems are  well-known to be very challenging theoretically. First,  the control does not directly enter into the variational formulation, a standard approach is to use the concept of the very weak solutions; see,  e.g. \cite{MR2084239}. Second, the regularity of the solution is low; see, e.g.,  \cite{MR3432846}. Only in the last ten years have researchers developed thorough well-posedness, regularity and error estimation results for the Dirichlet boundary control of elliptic PDEs. Casas et al.  in \cite{MR2272157} obtained an  error estimate of order $\mathcal O(h^s)$ for all $s<{\min\{1, {\pi}/{2\omega}\}}$, where $\omega$ is the largest angle of the boundary polygon. Deckelnick et al. in \cite{MR2558321} proved an order of $\mathcal O(h\sqrt{\ln h} )$ for the control based on a variational discretization on two- and three-dimensional curved domains.  
%May et al. in \cite{MR3070527} improved the convergence rate  for the state and the dual state by a dual argument. Recently, Apel et al. in \cite{ApelMateosPfeffererRosch17} obtained a higher convergence rate $\mathcal O(h^s)$ for all $s<{\min\{1, {\pi}/{\omega}-1/2\}}$ uses the standard quasi-uniform meshes, and for all $s<{\min\{3/2, {\pi}/{\omega}-1/2\}}$ uses a special mesh.

Researchers have performed numerical analysis of computational methods for Dirichlet boundary control problems for over a decade. Many  researchers  considered the standard finite element method and obtained an error estimate for the optimal control of order $h^s$ for all $s<{\min\{1, {\pi}/{2\omega}\}}$, where $\omega$ is the largest angle of the boundary polygon; see, e.g., \cite{MR2272157,MR3070527,Mateos_Neitzel_Constrained_COA_2016}. Apel et al. in \cite{ApelMateosPfeffererRosch17} considered special meshes and obtained an optimal  convergence rate with  $s<{\min\{3/2, {\pi}/{\omega}-1/2\}}$.  Some mixed finite element methods have also been used for Dirichlet boundary control problems because the essential Dirichlet boundary condition becomes natural, i.e., the Dirichlet boundary data directly enters the variational setting.  In \cite{MR2806572}, Gong et al.\ used a standard mixed method to obtain  an  error estimate  for all $s<{\min\{1, {\pi}/{2\omega}\}}$. Recently, we used an HDG method to obtain an optimal convergence rate for all $s<{\min\{3/2, {\pi}/{\omega}-1/2\}}$ without using higher order elements or a special mesh \cite{HuShenSinglerZhangZheng_HDG_Dirichlet_control1}. Moreover, the number of degrees of freedom are lower for HDG methods than standard mixed methods.

All of the above works focus on Dirichlet boundary control of the Poisson equation. However, Dirichlet boundary control problems play an important role in many applications governed by more complicated models, such as the Navier-Stokes equations; see, e.g.,  \cite{MR1145711,MR1135991,MR1720145,MR1759904,MR1613873}. In order to work towards numerical analysis results for more difficult PDEs, one essential and necessary step is to fully understand the convection diffusion Dirichlet boundary control problem.  Benner and Y\"ucel in \cite{PeterBenner} used a local discontinuous Galerkin (LDG) method and  they obatined  an  error estimate for the control of order $\mathcal O(h^s)$ for all $s<{\min\{1, {\pi}/{2\omega}\}}$. Also, very recently, we proposed a new HDG method to  study  this problem and obtained an optimal convergence rate  $\mathcal O(h^s)$ for all $s<{\min\{3/2, {\pi}/{\omega}-1/2\}}$;  see \cite{HuMateosSinglerZhangZhang1,HuMateosSinglerZhangZhang2} for more details. 

However, the previous works only approximated  solutions of convection diffusion Dirichlet boundary control problems in the \emph{diffusion dominated} case. They did not consider the more difficult \emph{convection dominated} case, i.e., $\varepsilon \ll |\bm \beta| $.  Even without the Dirichlet boundary control,  solutions of convection dominated diffusion PDEs  typically  have layers; therefore, designing a robust numerical scheme for this problem is a major difficulty difficulty and has been considered in many works; see, e.g., \cite{MR0600325,MR890252,MR1031443,MR2068903} and the references therein. Discontinuous Galerkin (DG) methods have proved very useful for solving convection dominated PDEs; see, e.g.,  \cite{MR2970739,MR2585178,MR2257111,MR1655854,MR3614017,MR2945143,MR1885610} for standard  DG methods and  \cite{MR3487281,MR3342199} for HDG methods. For more information on HDG methods; see, e.g., \cite{MR2485455,MR2772094, MR3626531,MR3522968,MR3463051,MR3343926,MR3440284,QiuShi16NS}. Moreover, there are some existing convection dominated diffusion \emph{distributed} optimal control numerical analysis works; see, e.g., \cite{MR2302057,MR3022208,MR2595051}.  However, the techniques in the above works are \emph{not} applicable for convection dominated Dirichlet boundary control problems since the solutions of \eqref{cost1}-\eqref{Ori_problem} frequently have low regularity, i.e., $y\in H^{1+s}(\Omega)$ with $0\le s<1/2$.

Formally, the optimal control $ u \in L^2(\Gamma) $ and the optimal state $ y \in L^2(\Omega) $ minimizing the cost functional satisfy a mixed weak formulation of the  optimality system
\begin{subequations}\label{eq_adeq}
	\begin{align}
	-\varepsilon\Delta y+\nabla\cdot(\bm \beta y)+\sigma y&=f\qquad\quad~~\text{in}~\Omega,\label{eq_adeq_a}\\
	y&=u\qquad\quad~~\text{on}~\Gamma,\label{eq_adeq_b}\\
	-\varepsilon\Delta z-\nabla\cdot(\bm \beta z)+(\nabla\cdot\bm\beta+\sigma )z&=y-y_d\quad~~\text{in}~\Omega,\label{eq_adeq_c}\\
	z&=0\qquad\qquad\text{on}~\Gamma,\label{eq_adeq_d}\\
	\gamma u-\varepsilon\nabla z\cdot\bm n&=0\qquad\qquad\text{on}~\Gamma.\label{eq_adeq_e}
	\end{align}
\end{subequations}
In this  work, we use polynomials of degree $k$ to approximate the state $y$, dual state $z$ and their fluxes $\bm q = -\varepsilon\nabla  y $ and $ \bm p = -\varepsilon\nabla z$, respectively. Moreover, we also use polynomials of degree $k$ to approximate the numerical trace of the state and dual state on the edges (or faces) of the spatial mesh, which are the only globally coupled unknowns. The HDG  method considered here is different from the HDG  method we considered for convection diffusion Dirichlet boundary control problems in  \cite{HuMateosSinglerZhangZhang1,HuMateosSinglerZhangZhang2}. A major difference is that the HDG method here has a lower computational cost.

In \Cref{erroranalysis}, we obtain an optimal convergence rate for the optimal control in 2D under certain basic assumptions on the desired state $ y_d $ and the domain $ \Omega $; specifically, we prove
\begin{align}\label{error_u}
\| u - u_h\|_{\Gamma} \le C h^s,
\end{align}
for all $s<{\min\{3/2, {\pi}/{\omega}-1/2\}}$, and the constant $C$ only depends on the exact solution, the domain and the polynomial degree.  To prove the estimate \eqref{error_u}, we cannot use the numerical analysis strategy from \cite{PeterBenner,HuMateosSinglerZhangZhang1,HuMateosSinglerZhangZhang2} because the constants in their error estimates may blow up as $\varepsilon$ approaches zero. In order to obtain the  estimate \eqref{error_u} with the constant $C$ independent of  $\varepsilon$, we follow a strategy from \cite{MR3342199} and use  weighted test functions in an energy argument. However, the techniques used in \cite{MR3342199} are not directly applicable for solutions with low regularity. Moreover, unlike all the previous Dirichlet boundary control numerical analysis works, we only assume the mesh is shape regular, not quasi-uniform.  We present numerical results in \Cref{sec:numerics} to illustrate the performance of the HDG method.

\section{Optimality system, regularity and HDG formulation}
\label{sec:Background}

We begin with some notation. For any bounded domain $\Lambda \subset \mathbb{R}^d$ $(d=2,3)$, let $H^{m}(\Lambda)$ and $H^m_0(\Lambda)$  denote the usual  $m$th-order Sobolev spaces on $\Lambda$, and let $\|\cdot\|_{m, \Lambda}$, $|\cdot|_{m,\Lambda}$  denote the norm and  seminorm on these spaces.
We use $(\cdot,\cdot)_{m,\Lambda}$ to denote the inner product on $H^m(\Lambda)$, and set  $(\cdot,\cdot)_{\Lambda}:=(\cdot,\cdot)_{0,\Lambda}$.
When $\Lambda=\Omega$, we denote $\|\cdot\|_{m }:=\|\cdot\|_{m, \Omega}$, and $ |\cdot|_{m}:=|\cdot|_{m,\Omega}$.  Also, when $\Lambda$ is the boundary of a set in $\mathbb R^d$,  we use $\langle\cdot,\cdot\rangle_{\Lambda}$ to replace $(\cdot,\cdot)_{\Lambda}$. Bold face fonts will be used for vector Sobolev spaces along with vector-valued functions. In addition, we introduce the following  space:
%%$$L^2_0(\Lambda):=\{v| v\in L^2(\Lambda):= H^0(\Lambda), (v,1)_{\Lambda}=0\},$$
\begin{align*}
\bm{H}(\text{div},\Lambda):=\{\bm{v}\in [L^2(\Lambda)]^d: \nabla\cdot\bm{v}\in L^{2}(\Lambda)\}.
\end{align*}

We now present the optimality system for  problem \eqref{cost1}-\eqref{Ori_problem} and give a regularity result.
\subsection{Optimality system and regularity}
\label{sec:Analysis_of_the_Dirichlet_Control_Problem} 
Throughout the paper,  we suppose $\Omega$ is a convex polygonal domain, and let $\omega \in [\pi/3, \pi)$ denote its largest interior angle. The optimal control $ u $ is determined by the optimality system for the state $ y $ and the dual state $ z $.  For the HDG method, we use a mixed formulation of the optimality system; therefore we introduce the primary flux $ \bm q = -\varepsilon \nabla y $ and the dual flux $ \bm p = -\varepsilon \nabla z $.  The well-posedness and regularity of the mixed formulation of the optimality system is contained in the result below. The proof of \Cref{MT210}  is omitted here since it is very similar with a proof of a similar result in  \cite{HuMateosSinglerZhangZhang1}.
\begin{theorem}\label{MT210}
	If $y_d\in H^{t^*}(\Omega)$ for some $0\leq t^*<1$, $\sigma\in L^{\infty}(\Omega)\cap H^1(\Omega)$, $f=0$ and the velocity vector field $\bm \beta$ satisfies
	\begin{equation}\label{eqn:beta_assumptions1}
	\bm \beta \in [L^\infty(\Omega)]^2,  \quad  \nabla\cdot\bm{\beta}\in L^\infty(\Omega),  \quad \sigma + \frac 1 2 \nabla \cdot \bm \beta \geq 0,  \quad  \nabla \nabla \cdot \bm{\beta}\in[L^2(\Omega)]^2,
	\end{equation}
	then problem \eqref{cost1}-\eqref{Ori_problem} has a unique solution $  u\in L^2(\Gamma)$.  Moreover, for any $ s > 0 $ satisfying $s\leq 1/2 +t^*$ and $s<\min\{ 3/ 2,{\pi}/{\omega}- 1/ 2\}$, we have 
	\begin{align*}
	(u,\bm q, \bm p, y, z) &\in H^s (\Gamma)\times\bm H^{s-\frac{1}{2}} (\Omega)\times \bm H^{s+ \frac 1 2}(\Omega) \times H^{s+\frac 1 2}(\Omega)\times (H^{s+ \frac 3 2}(\Omega)\cap H_0^1(\Omega)),
	\end{align*}
	is the unique solution of 
	\begin{subequations}\label{mixed}
		\begin{align}
		\varepsilon^{-1}\innOm{ {\bm q}}{\bm r} -\innOm{ y}{\nabla\cdot \bm r} + \innGa{ u}{\bm r\cdot \bm n} &= 0,\label{mixed_a1}\\
		\innOm{\nabla\cdot( {\bm q}+\bm{\beta}  y)}{w} +\innOm{\sigma y}{w}  &= 0,\label{mixed_b1}\\
		\varepsilon^{-1}\innOm{ {\bm p}}{\bm r}-\innOm{ z}{\nabla\cdot \bm r} &= 0,\label{mixed_c1}\\
		\innOm{\nabla\cdot( {\bm p}-\bm{\beta}  z)}{w} +\innOm{(\nabla\cdot\bm \beta+\sigma) y}{w} &= \innOm{ y- {y_d}}{w},\label{mixed_d1}\\
		\innGa{\gamma  u+{\bm p}\cdot\bm n}{v} &=  0,\label{mixed_e1}
		\end{align}
		for all $(\bm r,w,v)\in \bm H(\textup{div},\Omega)\times L^2(\Omega)\times L^2(\Gamma)$.
		Furthermore, we have $\Delta y \in L^2(\Omega)$.
	\end{subequations}
\end{theorem}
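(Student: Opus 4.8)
\medskip
\noindent\emph{Proof proposal.} The plan is to reconstruct the argument of \cite{HuMateosSinglerZhangZhang1} in three stages: existence and uniqueness of the optimal control via the transposition (very weak) solution framework; derivation of the first order optimality system together with its adjoint state; and a regularity bootstrap exploiting $H^2$ elliptic regularity on convex polygons. A fourth step then verifies that the regular solution of the strong optimality system \eqref{eq_adeq} is exactly the solution of the mixed weak form \eqref{mixed}.

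Since $f=0$ and $u\in L^2(\Gamma)$ only, the state problem \eqref{Ori_problem} has no $H^1$ solution, so I would first define the state $y=Su\in L^2(\Omega)$ by transposition against the formal adjoint problem $-\varepsilon\Delta\psi-\bm\beta\cdot\nabla\psi+\sigma\psi=\phi$ in $\Omega$, $\psi=0$ on $\Gamma$, which for $\phi\in L^2(\Omega)$ has a unique solution $\psi\in H^2(\Omega)\cap H^1_0(\Omega)$ by convexity of $\Omega$ and the sign condition $\sigma+\tfrac12\nabla\cdot\bm\beta\ge 0$; explicitly, $(Su,\phi)_\Omega=\varepsilon\innGa{u}{\partial_{\bm n}\psi}$. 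The bound $\|Su\|_{0,\Omega}\le C\|u\|_{0,\Gamma}$ follows from the $H^2$ estimate for $\psi$, and interpolating the two endpoint maps $H^{-1/2}(\Gamma)\to L^2(\Omega)$ and $H^{1/2}(\Gamma)\to H^1(\Omega)$ gives $Su\in H^{1/2}(\Omega)$. The reduced functional $j(u)=\tfrac12\|Su-y_d\|_{0,\Omega}^2+\tfrac{\gamma}{2}\|u\|_{0,\Gamma}^2$ is strictly convex, continuous and coercive on $L^2(\Gamma)$, so a unique minimizer $u$ exists, and its G\^ateaux derivative gives the optimality condition $\gamma u+S^\star(Su-y_d)=0$ on $\Gamma$. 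Identifying $S^\star(y-y_d)$ with $\bm p\cdot\bm n=-\varepsilon\partial_{\bm n}z$, where $z$ solves \eqref{eq_adeq_c}--\eqref{eq_adeq_d} with right hand side $y-y_d\in L^2(\Omega)$, yields \eqref{eq_adeq_e}; convexity of $\Omega$ already gives $z\in H^2(\Omega)\cap H^1_0(\Omega)$ at this stage.

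For the regularity I would iterate on \eqref{eq_adeq}. From $z\in H^2(\Omega)$ we get $\bm p=-\varepsilon\nabla z\in\bm H^1(\Omega)$ and $\bm p\cdot\bm n\in H^{1/2}(\Gamma)$, hence $u=-\gamma^{-1}\bm p\cdot\bm n\in H^{1/2}(\Gamma)$; inserting this boundary datum into \eqref{eq_adeq_a}--\eqref{eq_adeq_b} gives $y\in H^1(\Omega)$ and $\bm q=-\varepsilon\nabla y\in[L^2(\Omega)]^2$. Then $\varepsilon\Delta y=\nabla\cdot(\bm\beta y)+\sigma y$, with $\bm\beta,\nabla\cdot\bm\beta,\sigma\in L^\infty(\Omega)$, shows its right hand side lies in $L^2(\Omega)$, which already proves $\Delta y\in L^2(\Omega)$. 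I would then cycle: if $y\in H^{\mu}(\Omega)$ then $y-y_d\in H^{\min\{\mu,t^*\}}(\Omega)$, the adjoint solve on the convex polygon gives $z\in H^{r}(\Omega)$ for $r$ up to $\min\{\min\{\mu,t^*\}+2,\ 1+\pi/\omega\}$, its normal trace gives $u\in H^{r-3/2}(\Gamma)$, and the state solve gives $y\in H^{r-1}(\Omega)$; each cycle raises the exponent of $y$ by up to one, and the process stops at the value governed by the three constraints in the statement, namely the smoothness of $y_d$ ($s\le 1/2+t^*$), the largest-angle corner singularity ($s<\pi/\omega-1/2$), and the a priori ceiling $s<3/2$. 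The flux regularity $\bm q\in\bm H^{s-1/2}(\Omega)$, $\bm p\in\bm H^{s+1/2}(\Omega)$ follows from $\bm q=-\varepsilon\nabla y$, $\bm p=-\varepsilon\nabla z$. Finally, with this regularity every term in \eqref{mixed} is well defined: $\bm q=-\varepsilon\nabla y$ with $y|_\Gamma=u$ yields \eqref{mixed_a1} and $\bm p=-\varepsilon\nabla z$ with $z|_\Gamma=0$ yields \eqref{mixed_c1} after integration by parts, \eqref{mixed_b1} and \eqref{mixed_d1} are the weak forms of \eqref{eq_adeq_a} and \eqref{eq_adeq_c}, and \eqref{mixed_e1} is \eqref{eq_adeq_e}, so $(u,\bm q,\bm p,y,z)$ solves \eqref{mixed}.

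The hard part will be the fractional-order bootstrap on the polygonal domain. Because the transposition state is only $H^{1/2}$-regular, one must track carefully how the corner exponent $\pi/\omega$ interacts with the limited smoothness of $y_d$ and of the state, and confirm that the iteration really terminates at the claimed exponent rather than stalling earlier; this low-regularity feature is precisely what blocks a direct appeal to smooth-domain convection-diffusion control theory. A secondary technical point is justifying the transposition bound $\|Su\|_{H^{1/2}(\Omega)}\le C\|u\|_{L^2(\Gamma)}$ on the non-smooth domain, which rests on the convex-domain $H^2$ regularity of the adjoint problem together with an interpolation argument in fractional Sobolev spaces on $\Omega$ and $\Gamma$.
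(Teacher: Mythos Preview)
Your proposal is correct and follows essentially the same approach as the paper indicates: the paper itself omits the proof of this theorem entirely, stating only that it ``is very similar with a proof of a similar result in \cite{HuMateosSinglerZhangZhang1},'' which is precisely the argument you set out to reconstruct. The transposition/very weak solution framework for the state, the adjoint identification of $S^\star$ with the conormal trace of $z$, and the fractional bootstrap on the convex polygon capped by the corner exponent $\pi/\omega$ and the smoothness of $y_d$ are exactly the ingredients of that reference, so there is nothing to compare.
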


%Throughout this paper, we use $C$ to denote a positive constant independent of mesh size and $\varepsilon$,  which may take on different values at its each occurrence.   

\subsection{The HDG formulation} 
Let $\mathcal{T}_h=\bigcup\{T\}$  be a conforming simplex mesh that partitions the domain $\Omega$. For any $T\in\mathcal{T}_h$, we let $h_T$ be the diameter of  $T$ and denote the mesh size by $h:=\max_{T\in\mathcal{T}_h}h_T$.  
Denote the edges of $T$ by $E$,  let $\mathcal E_h$ be the set of all edges $E$, let  $\mathcal E_h^\partial$ be the set of edges $E$ such that $E\subset\Gamma$, and set $\mathcal E_h^o = \mathcal E_h \setminus \mathcal E_h^\partial$.  
Let $h_E$ denote the diameter of $E$.
The mesh dependent inner products are denoted by
\begin{align*}
(w,v)_{\mathcal{T}_h} = \sum_{T\in\mathcal{T}_h} (w,v)_T,   \quad\quad\quad\quad\left\langle \zeta,\rho\right\rangle_{\partial\mathcal{T}_h} = \sum_{T\in\mathcal{T}_h} \left\langle \zeta,\rho\right\rangle_{\partial T}.
\end{align*}
We use $\nabla $ and $\nabla \cdot$ to denote the broken gradient and broken divergence with respect to
$\mathcal{T}_h$.
For an integer $k\ge 0$, $\mathcal {P}_k(\Lambda)$  denotes the set of all  polynomials  defined on $\Lambda$ with degree not greater than $k$. We introduce the discontinuous finite element spaces.
\begin{align*}
\bm{V}_h  &:= \{\bm{v}\in [L^2(\Omega)]^2: \bm{v}|_{T}\in [\mathcal {P}_k(T)]^d, \forall T\in \mathcal{T}_h\},\\
{W}_h  &:= \{{w}\in L^2(\Omega): {w}|_{T}\in \mathcal {P}_k(T), \forall T\in \mathcal{T}_h\},\\
M^o_h  &:= \{{\mu}\in L^2(\mathcal{E}_h): {\mu}|_{E}\in \mathcal {P}_k(E), \forall E\in \mathcal{E}_h, \text{ and }\mu|_{\Gamma}=0\},\\
M_h^\partial  &:= \{{\mu}\in L^2(\mathcal{E}^{\partial}_h): {\mu}|_{E}\in \mathcal {P}_k(E), \forall E\in \mathcal{E}_h^{\partial}\}.
\end{align*}
In our earlier works \cite{HuMateosSinglerZhangZhang1,HuMateosSinglerZhangZhang2}, we used a  $\mathcal P_{k+1}$ local space for the spaces $W_h$ and $M_h$. In this work, we use polynomial degree $k$ for all spaces. Since the globally coupled degrees of freedom depend on the space $M_h$, the computational cost of the HDG method in this paper is much lower than the HDG method  in \cite{HuMateosSinglerZhangZhang1,HuMateosSinglerZhangZhang2}.

The HDG method for mixed weak form of the  optimality system \eqref{mixed} is to find   $(\bm q_h,y_h,\widehat{y}^o_h,\bm p_h,z_h,\widehat{z}^o_h,u_h)$ $\in [\bm V_h\times W_h\times M^{o}_h]^2\times M^{\partial}_h$ such that
\begin{subequations}\label{HDG_discrete2}
	\begin{align}
	\varepsilon^{-1}(\bm q_h,\bm r_h)_{\mathcal{T}_h} -( y_h,\nabla \cdot\bm r_h)_{\mathcal{T}_h} +\langle \widehat y^o_h,\bm r_h\cdot\bm n\rangle_{\partial\mathcal T_h}=-\langle u_h,\bm r_h\cdot\bm{n} \rangle_{\mathcal{E}_h^\partial},\label{HDG_discrete2_a}
	\end{align}
	for all $\bm r_h \in \bm V_h$,
	\begin{align}\label{HDG_discrete2_b}
	\begin{split}
	&\quad-(w_h,\nabla \cdot\bm{q}_h)_{\mathcal{T}_h}+\langle \widehat{w}_h^o,\bm{q}_h\cdot\bm{n} \rangle_{\partial\mathcal{T}_h}
	-\langle \tau_1(  y_h-\widehat{y}_h^o), w_h-\widehat{w}_h^o \rangle_{\partial\mathcal{T}_h}\\
	&\quad+( y_h,\bm\beta\cdot\nabla  w_h)_{\mathcal{T}_h}-\langle \widehat{y}^o_h,\bm \beta\cdot\bm n w_h\rangle_{\partial\mathcal{T}_h}
	-(\sigma y_h,w_h)_{\mathcal{T}_h}\\
	&=-(f,w_h)_{\mathcal{T}_h}-\langle  (\tau_1- \bm \beta \cdot\bm n) u_h, v_h\rangle_{\mathcal{E}_h^{\partial}},  
	\end{split}
	\end{align}
	for all $(w_h,\widehat w_h^o)\in W_h\times M^o_h$,
	\begin{align}
	\varepsilon^{-1}(\bm p_h,\bm r_h)_{\mathcal{T}_h} -( z_h,\nabla \cdot\bm r_h)_{\mathcal{T}_h} +\langle \widehat z^o_h,\bm r_h\cdot\bm n\rangle_{\partial\mathcal T_h}&=0,\label{HDG_discrete2_c}
	\end{align}
	for all $r_h\in \bm V_h$,
	\begin{align} \label{HDG_discrete2_d}
	\begin{split}
	&\quad -(w_h,\nabla \cdot\bm{p}_h)_{\mathcal{T}_h}+\langle \widehat{w}_h^o,\bm{p}_h\cdot\bm{n} \rangle_{\partial\mathcal{T}_h} -\langle \tau_2(  z_h-\widehat{z}_h^o)_{\mathcal{T}_h}, w_h-\widehat{w}_h^o \rangle_{\partial\mathcal{T}_h}\\
	&\quad-( z_h,\bm\beta\cdot\nabla  w_h)_{\mathcal{T}_h}+\langle \widehat{z}_h^o,\bm \beta\cdot\bm n  w_h\rangle_{\partial\mathcal{T}_h}
	-((\sigma+\nabla\cdot\bm{\beta}) z_h,w_h)_{\mathcal{T}_h}\\
	&=-(y_h-y_d,{w}_h)_{\mathcal{T}_h}, 
	\end{split}
	\end{align}
	for all  $(w_h,\widehat w_h^o)\in W_h\times M^o_h$,
	\begin{align}
	\langle \gamma u_h+\bm p_h\cdot\bm n+\tau_2( z_h-\widehat{z}_h^o), \widehat w_h^{\partial}\rangle_{\varepsilon_h^\partial}& =0,\label{6e}
	\end{align}
	for all $ \widehat w_h^{\partial} \in  M_h^{\partial}$. Here, the positive stabilization functions $\tau_1$ and $\tau_2$ are chosen as 
	\begin{align}
	\tau_1 |_{\partial T}&= \|\bm \beta\cdot\bm n\|_{0,\infty,\partial T}+\frac{1}{2}\bm\beta\cdot\bm{n}+\varepsilon h_T^{-1},\label{def_tau1}\\
	\tau_2 |_{\partial T}&= \|\bm \beta\cdot\bm n\|_{0,\infty,\partial T}-\frac{1}{2}\bm\beta\cdot\bm{n}+\varepsilon h_T^{-1}.\label{def_tau2}
	\end{align}
	To simplify the presentation later, we define 
	\begin{align}\label{def_tau}
	\tau = \frac{\tau_1 + \tau_2}{2} = \|\bm \beta\cdot\bm n\|_{0,\infty,\partial T}+\varepsilon h_T^{-1}.
	\end{align}
\end{subequations}

\subsection{A compact formulation}
To simplify the notation, for $(\bm q_h, y_h, \widehat y_h^o, \\ \bm p_h, z_h,\widehat z_h^o, \bm r_h, w_h,\widehat w_h^o) \in [\bm V_h\times W_h\times M_h^o]^3$, we denote
\begin{align}
\hspace{1em}&\hspace{-1em}\mathcal B_1(\bm{q}_h,y_h,\widehat y_h^o;\bm r_h,w_h,\widehat w_h^o)\nonumber\\
&=\varepsilon^{-1}(\bm q_h,\bm r_h)_{\mathcal{T}_h} -( y_h,\nabla \cdot\bm r_h)_{\mathcal{T}_h} +\langle \widehat y^o_h,\bm r_h\cdot\bm n\rangle_{\partial\mathcal T_h} \nonumber\\
&\quad-(w_h,\nabla \cdot\bm{q}_h)_{\mathcal{T}_h}+\langle \widehat{w}_h^o,\bm{q}_h\cdot\bm{n} \rangle_{\partial\mathcal{T}_h} -\langle \tau_1(  y_h-\widehat{y}_h^o), w_h-\widehat{w}_h^o \rangle_{\partial\mathcal{T}_h}\nonumber\\
&\quad +( y_h,\bm\beta\cdot\nabla w_h)_{\mathcal{T}_h}-\langle \widehat{y}^o_h,\bm \beta\cdot\bm n w_h\rangle_{\partial\mathcal{T}_h}
-(\sigma y_h,w_h)_{\mathcal{T}_h},\label{def_B1}\\
\hspace{1em}&\hspace{-1em}\mathcal B_2(\bm{p}_h,z_h,\widehat z_h^o;\bm{r}_h,w_h,\widehat w_h^o)\nonumber\\
&=\varepsilon^{-1}(\bm p_h,\bm r_h)_{\mathcal{T}_h} -( z_h,\nabla \cdot\bm r_h)_{\mathcal{T}_h} +\langle \widehat z^o_h,\bm r_h\cdot\bm n\rangle_{\partial\mathcal T_h}\nonumber\\
&\quad-(w_h,\nabla \cdot\bm{p}_h)_{\mathcal{T}_h}+\langle \widehat{w}_h^o,\bm{p}_h\cdot\bm{n} \rangle_{\partial\mathcal{T}_h} -\langle \tau_2(  z_h-\widehat{z}_h^o), w_h-\widehat{w}_h^o \rangle_{\partial\mathcal{T}_h}\nonumber\\
&\quad -( z_h,\bm\beta\cdot\nabla  w_h)_{\mathcal{T}_h}+\langle \widehat{z}_h^o,\bm \beta\cdot\bm n  w_h\rangle_{\partial\mathcal{T}_h}
-((\sigma+\nabla\cdot\bm{\beta}) z_h,w_h)_{\mathcal{T}_h}. \label{def_B2}
\end{align}

Then we can rewrite \eqref{HDG_discrete2} as follows:
find $\left(\bm q_h,y_h,\widehat{y}_h^o,\bm p_h,z_h,\widehat{z}_h^o,u_h\right)\in [\bm V_h\times W_h\times M^{o}_h]^2\times M^{\partial}_h$ such that
\begin{subequations}\label{HDG_full_discrete}
	\begin{align}
	\mathcal B_1(\bm q_h,y_h,\widehat y^o_h;\bm r_1,w_1,\widehat w_1^o)&=-( f, w_1)_{\mathcal T_h}-\langle  u_h, \tau_2 w_1+\bm r_1\cdot\bm{n}\rangle_{\mathcal{E}_h^{\partial}},\label{HDG_full_discrete_a}\\
	\mathcal B_2(\bm p_h,z_h,\widehat z^o_h;\bm r_2,w_2,\widehat w_2^o)&=-(y_h-y_d,w_2)_{\mathcal{T}_h},\label{HDG_full_discrete_b}\\
	\langle \bm p_h\cdot\bm n+\tau_2( z_h-\widehat{z}_h^o), \widehat w_h^\partial\rangle_{\varepsilon_h^\partial} &=\langle \gamma u_h, \widehat w_h^\partial\rangle_{\varepsilon_h^\partial} ,\label{HDG_full_discrete_c}
	\end{align}
	for all  $\left(\bm r_1,w_1,\widehat{w}^o_1,\bm r_2,w_2,\widehat{w}^o_2,  \widehat w_h^\partial \right)\in [\bm V_h\times W_h\times M^{o}_h]^2\times M^{\partial}_h$.
\end{subequations}

The following basic result, which is similar to results in \cite{HuMateosSinglerZhangZhang1,HuMateosSinglerZhangZhang2},  is crucial to the proof of the well-posedness of the discrete optimality system \eqref{HDG_discrete2_a}-\eqref{6e}, and is also a very important part of  the final stage of numerical analysis (see the proof of \Cref{lemma_u_par}).
\begin{lemma}\label{sys}
	For all $\left(\bm q_h,y_h,\widehat{y}_h^o,\bm r_h,w_h,\widehat{w}_h^o\right)\in [\bm V_h\times W_h\times M^{o}_h]^2$, we have 
	\begin{eqnarray}
	\mathcal B_1(\bm{q}_h,y_h,\widehat y_h^o;\bm r_h,w_h,\widehat w_h^o)=
	\mathcal B_2(\bm r_h,w_h,\widehat w_h^o;\bm{q}_h,y_h,\widehat y_h^o).\label{sym}
	\end{eqnarray}
\end{lemma}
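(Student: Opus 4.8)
The plan is to prove the identity \eqref{sym} by expanding both $\mathcal B_1$ and $\mathcal B_2$ term by term according to their definitions \eqref{def_B1} and \eqref{def_B2}, and then matching up the pieces. Since $\mathcal B_2$ differs from $\mathcal B_1$ only in that $\tau_1$ is replaced by $\tau_2$, $\bm\beta$ by $-\bm\beta$, and $\sigma$ by $\sigma + \nabla\cdot\bm\beta$, the right-hand side $\mathcal B_2(\bm r_h,w_h,\widehat w_h^o;\bm q_h,y_h,\widehat y_h^o)$ can be written out explicitly with those substitutions and with the roles of the two argument triples swapped. The first (diffusion/mixed) block of $\mathcal B_1$ — namely $\varepsilon^{-1}(\bm q_h,\bm r_h)_{\mathcal T_h} - (y_h,\nabla\cdot\bm r_h)_{\mathcal T_h} + \langle\widehat y_h^o,\bm r_h\cdot\bm n\rangle_{\partial\mathcal T_h} - (w_h,\nabla\cdot\bm q_h)_{\mathcal T_h} + \langle\widehat w_h^o,\bm q_h\cdot\bm n\rangle_{\partial\mathcal T_h}$ — is manifestly symmetric under the exchange $(\bm q_h,y_h,\widehat y_h^o)\leftrightarrow(\bm r_h,w_h,\widehat w_h^o)$, so that part matches immediately against the corresponding block of $\mathcal B_2$.

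Next I would treat the stabilization terms. The term $-\langle\tau_1(y_h-\widehat y_h^o),w_h-\widehat w_h^o\rangle_{\partial\mathcal T_h}$ in $\mathcal B_1$ is symmetric in $(y_h,\widehat y_h^o)\leftrightarrow(w_h,\widehat w_h^o)$ for any fixed weight. In $\mathcal B_2(\bm r_h,w_h,\widehat w_h^o;\bm q_h,y_h,\widehat y_h^o)$ the analogous term is $-\langle\tau_2(w_h-\widehat w_h^o),y_h-\widehat y_h^o\rangle_{\partial\mathcal T_h}$; so after using symmetry of the pairing this reads $-\langle\tau_2(y_h-\widehat y_h^o),w_h-\widehat w_h^o\rangle_{\partial\mathcal T_h}$. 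These do not agree pointwise, but their difference is $\langle(\tau_2-\tau_1)(y_h-\widehat y_h^o),w_h-\widehat w_h^o\rangle_{\partial\mathcal T_h} = -\langle(\bm\beta\cdot\bm n)(y_h-\widehat y_h^o),w_h-\widehat w_h^o\rangle_{\partial\mathcal T_h}$ by \eqref{def_tau1}--\eqref{def_tau2}. I expect this convection-generated discrepancy to be exactly cancelled by the mismatch among the convection terms, so the heart of the computation is bookkeeping of the $\bm\beta$ terms.

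The main obstacle, then, is the convection block. In $\mathcal B_1$ we have $(y_h,\bm\beta\cdot\nabla w_h)_{\mathcal T_h} - \langle\widehat y_h^o,\bm\beta\cdot\bm n\, w_h\rangle_{\partial\mathcal T_h} - (\sigma y_h,w_h)_{\mathcal T_h}$; in $\mathcal B_2(\bm r_h,w_h,\widehat w_h^o;\bm q_h,y_h,\widehat y_h^o)$ the corresponding contribution is $-(w_h,\bm\beta\cdot\nabla y_h)_{\mathcal T_h} + \langle\widehat w_h^o,\bm\beta\cdot\bm n\, y_h\rangle_{\partial\mathcal T_h} - ((\sigma+\nabla\cdot\bm\beta)w_h,y_h)_{\mathcal T_h}$. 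To compare, I would integrate by parts elementwise: $(y_h,\bm\beta\cdot\nabla w_h)_{\mathcal T_h} = -(w_h,\bm\beta\cdot\nabla y_h)_{\mathcal T_h} - ((\nabla\cdot\bm\beta)y_h,w_h)_{\mathcal T_h} + \langle\bm\beta\cdot\bm n\, y_h, w_h\rangle_{\partial\mathcal T_h}$. Substituting this into the $\mathcal B_1$ expression turns its convection block into $-(w_h,\bm\beta\cdot\nabla y_h)_{\mathcal T_h} - ((\nabla\cdot\bm\beta + \sigma)y_h,w_h)_{\mathcal T_h} + \langle\bm\beta\cdot\bm n\,(y_h-\widehat y_h^o),w_h\rangle_{\partial\mathcal T_h}$. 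Comparing with the $\mathcal B_2$ side, all volume terms now coincide, and the boundary remainder from $\mathcal B_1$ minus that from $\mathcal B_2$ is $\langle\bm\beta\cdot\bm n\,(y_h-\widehat y_h^o),w_h\rangle_{\partial\mathcal T_h} - \langle\bm\beta\cdot\bm n\,(y_h-\widehat y_h^o),\widehat w_h^o\rangle_{\partial\mathcal T_h} = \langle\bm\beta\cdot\bm n\,(y_h-\widehat y_h^o),w_h-\widehat w_h^o\rangle_{\partial\mathcal T_h}$; this is precisely the negative of the stabilization discrepancy computed above, so everything cancels and \eqref{sym} follows. I would also note the single-valuedness of $\widehat w_h^o,\widehat y_h^o$ and $\bm n$ changing sign across interior faces is used implicitly in writing these boundary sums, but no such term survives since each appears symmetrically; the only genuine subtlety is keeping the $\widehat{(\cdot)}^o$ versus non-hat bookkeeping straight in the integration by parts.
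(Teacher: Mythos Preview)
Your proposal is correct and follows essentially the same route as the paper's proof: compute $\mathcal B_1-\mathcal B_2$, observe the diffusion block is symmetric, use $\tau_1-\tau_2=\bm\beta\cdot\bm n$ for the stabilization mismatch, and integrate the convection term by parts so that the remaining boundary discrepancy cancels. The one identity you invoke only in passing---that $\langle\bm\beta\cdot\bm n\,\widehat y_h^o,\widehat w_h^o\rangle_{\partial\mathcal T_h}=0$ because $\widehat y_h^o,\widehat w_h^o\in M_h^o$ are single-valued on interior faces and vanish on $\Gamma$---is exactly the fact the paper states explicitly at the end of its proof, and it is what makes your step $\langle\widehat w_h^o,\bm\beta\cdot\bm n\, y_h\rangle_{\partial\mathcal T_h}=\langle\bm\beta\cdot\bm n\,(y_h-\widehat y_h^o),\widehat w_h^o\rangle_{\partial\mathcal T_h}$ legitimate.
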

\begin{proof} Using the definitions in \eqref{def_B1}-\eqref{def_B2} and integration by parts give
	\begin{eqnarray}
	&&\mathcal B_1(\bm{q}_h,y_h,\widehat y_h^o;\bm r_h,w_h,\widehat w_h^o)-
	\mathcal B_2(\bm r_h,w_h,\widehat w_h^o;\bm{q}_h,y_h,\widehat y_h^o)\nonumber\\
	&&\qquad=-\langle \tau_1(  y_h-\widehat{y}_h^o), w_h-\widehat{w}_h^o \rangle_{\partial\mathcal{T}_h}\nonumber\\
	&&\qquad\quad+\langle \tau_2(y_h-\widehat{y}_h^o), w_h-\widehat{w}_h^o \rangle_{\partial\mathcal{T}_h} \nonumber\\
	&&\qquad\quad+( y_h,\bm\beta\cdot\nabla  w_h)_{\mathcal{T}_h}-\langle \widehat{y}^o_h,\bm \beta\cdot\bm n w_h\rangle_{\partial\mathcal{T}_h}
	-(\sigma y_h,w_h)_{\mathcal{T}_h}\nonumber\\
	&&\qquad\quad+( w_h,\bm\beta\cdot\nabla  y_h)_{\mathcal{T}_h}-\langle \widehat{w}_h^o,\bm \beta\cdot\bm n  y_h\rangle_{\partial\mathcal{T}_h}
	+((\sigma+\nabla\cdot\bm{\beta}) w_h,y_h)_{\mathcal{T}_h} \nonumber\\
	&&\qquad=-\langle \bm\beta\cdot\bm n( y_h-\widehat{y}_h^o), w_h-\widehat{w}_h^o \rangle_{\partial\mathcal{T}_h}\nonumber\\
	&&\qquad\quad+\langle y_h,\bm \beta\cdot\bm n w_h\rangle_{\partial\mathcal{T}_h}
	-\langle \widehat y^o_h,\bm \beta\cdot\bm n w_h\rangle_{\partial\mathcal{T}_h}
	-\langle \widehat w^o_h,\bm \beta\cdot\bm n y_h\rangle_{\partial\mathcal{T}_h}\nonumber\\
	&&\qquad=0,\nonumber
	\end{eqnarray}
	where we used $\langle 
	\bm\beta\cdot\bm n,\widehat{y}_h^o\widehat{w}_h^o
	\rangle_{\partial\mathcal{T}_h}=0$.
	This proves our result.
\end{proof}

\section{Stability}
To perform the stability and error analysis for the convection dominated boundary control problem, we need to assume some conditions on the velocity vector field $\bm{\beta}$ and the effective reaction function $\bar{\sigma}:=\sigma+\frac{1}{2}\nabla\cdot\bm \beta$.

%
%follow the idea in \cite{MR2485457,MR3342199}, we need more finer requirements  \textbf{A1}-\textbf{A3}  on the velocity vector  $\bm{\beta}$ and the effective reaction function $\bar{\sigma}:=\sigma+\frac{1}{2}\nabla\cdot\bm \beta$. 

\begin{description}	
	\item[\textbf{(A1)}] $\bar{\sigma}$ has a nonnegative lower bound, i.e,
	\begin{align}\label{sigma}
	\sigma_0:=\inf_{\bm{x}\in\Omega}\bar{\sigma}\ge 0.% \text{ where } \bar{\sigma}:=\sigma+\frac{1}{2}\nabla\cdot\bm \beta.
	\end{align}
	
	\item[\textbf{(A2)}] $\bm \beta$ has no closed curves and
	\begin{align*}
	|\bm \beta(\bm{x})| \neq  0 \text{ for all } \bm x\in\Omega.
	\end{align*}
	
	\item[\textbf{(A3)}]  $\varepsilon<\min_{T\in \mathcal T_h}\{h_T\}.$
\end{description}
We note that we have already assumed \textbf{(A1)}  in \Cref{eqn:beta_assumptions1} in \Cref{MT210}.  We repeat the assumption here to highlight it. Also, since we are interested in the convection dominated case, \textbf{(A3)}  is a reasonable assumption. As shown in \cite{MR2485457}, assumption \textbf{(A2)} implies for any integer $k \geq 0$,  there exists a function $\psi\in W^{k+1,\infty}(\Omega)$ such that  for all $ \bm{x}\in\Omega$, we have 
\begin{align}\label{condition-beta}
\bm \beta\cdot\nabla\psi\ge2 \beta_0>0,
\end{align}
where  $\beta_0:={\|\bm \beta\|_{0,\infty}}/{L}$ and   $L$  is the diameter of $\Omega$. We  use assumption \textbf{(A3)} in the analysis to remove the assumption on the meshes. Specifically, in the proofs of \Cref{energyforE1andE2} and \Cref{error_ee}, we use assumption \textbf{(A3)} and a local inverse inequality to replace a global inverse inequality that has been used in all previous Dirichlet boundary control works. Therefore,  we only assume $ \{ \mathcal T_h \} $ is a conforming simplex partition of $ \Omega $. All previous works on Dirichlet boundary control problems required a conforming quasi-uniform mesh. In the future,  we hope to performed an {\em {a  posteriori}}  error analysis for the convection dominated boundary control problem.

\begin{remark}	
	If $\sigma_0\geq 0$, then assumption \textbf{(A2)} is the minimal known requirement that can be used to establish stability and error analysis results for numerical methods; see, e.g., \cite{MR2485457,MR3342199}. If instead $\sigma_0> 0$,  then we don't need to assume \textbf{(A2)} and the numerical analysis is less technical.  Specifically, we don't need to prove \Cref{LBB} below if $\sigma_0\geq 0$.
\end{remark}

%\subsection{Assumptions on data}
%For the purpose of the numerical analysis of the boundary control of convection dominated PDEs,
%follow the idea in \cite{MR2485457,MR3342199}, we need more finer requirements  \textbf{A1}-\textbf{A3}  on the velocity vector  $\bm{\beta}$ and the effective reaction function $\bar{\sigma}:=\sigma+\frac{1}{2}\nabla\cdot\bm \beta$. 

%\textbf{A1.}  $\bar{\sigma}$ has a nonnegative lower bound, i.e,
%\begin{eqnarray}
%\sigma_0:=\inf_{\bm{x}\in\Omega}\bar{\sigma}\ge 0\label{1.2}.% \text{ where } \bar{\sigma}:=\sigma+\frac{1}{2}\nabla\cdot\bm \beta.
%\end{eqnarray}

%\textbf{A2.}   $\bm \beta$ has no closed curves and
%\begin{eqnarray}
%	|\bm \beta(\bm{x})| \neq  0 \text{ for all } x\in\Omega.
%\end{eqnarray}
%
%\textbf{A3.} There exist two positive constants, $c_{\bm \beta}$ and $c_{\sigma}$, such that %  Smoothness for  effective reaction function $\bar{\sigma}$ and convection field $\bm \beta$
%%points, i.e.,
%\begin{eqnarray}
%\|\bm \beta\|_{0,\infty}&\ge& c_{\bm \beta}\|\bm \beta\|_{1,\infty}, \label{1.44}\\
%\sigma_0+\beta_0&\ge&c_{\sigma}\|\bar{\sigma}\|_{0,\infty} \quad\text{with}\quad \beta_0:={\|\bm \beta\|_{0,\infty}}/{L}, \label{1.55}
%\end{eqnarray}
%where $c_b$ and $c_{\sigma}$ are constants.

%\textbf{A4.} Since we are only interesting in convection-dominated case in this paper, it is nature to give the following assumption:
%\begin{eqnarray}
%\varepsilon\ll \beta_0+\sigma_0\sim 1,\label{sim-beta}
%\end{eqnarray}

\subsection{Preliminary material}
For any nonnegative integer $j$, we define the $L^2$-projections $\Pi_j^o$ and $\Pi_j^{\partial}$ as follows: for any $T\in \mathcal{T}_h$, $E\subset\partial T$,  $v\in L^2(T)$, $q\in L^2(E)$, find  $\Pi_j^ov\in \mathcal  {P}_{j}(T)$ and $\Pi_j^{\partial}q\in\mathcal  P_j(E)$ satisfying
\begin{subequations}\label{orth}
	\begin{align}
	(\Pi_j^ov,w_j)_T&=(v, w_j)_T,\ \ \forall w_j\in \mathcal P_j(T),\label{orth_o}\\
	\langle\Pi_j^{\partial}q,r_j\rangle_E&=\langle q, r_j\rangle_E,\ \ \ \forall r_j\in \mathcal P_j(E).\label{orth_p}
	\end{align}
\end{subequations}
We also define $\widetilde{\Pi}^{\partial}_k$ as
\begin{eqnarray}
\widetilde{\Pi}^{\partial}_k|_E=\left\{
\begin{aligned}
{\Pi}^{\partial}_k|_E, \ \  E\in\mathcal{E}_h^o,\\
0,                         \qquad E\in\mathcal{E}_h^{\partial}.
\end{aligned}
\right.\nonumber
\end{eqnarray}
Then $\widetilde{\Pi}^{\partial}_k$ is an operator mapping $L^2(\mathcal{E}_h)$ to $M_h^o$. 

%We need the following approximation and stability results  for the $L^2$-projections $\Pi^{o}_j$ and $\Pi^{\partial}_j$ with nonnegative integer $j$. 

%The following stability and approximation properties are some basic results can be found anywhere else, for example \cite{Shi}.

%By the definitions of $\Pi^{o}_j$ and $\Pi^{\partial}_j$,  Lemma \ref{lemma22} and the Exact Interpolation Theorem (\cite{sobolev}, Page 220, 7.23 THEOREM), one can gets

%At first we give the basic results for the projection $\Pi^{o}_j$ and $\Pi^{\partial}_j$ with nonnegative integer $j$ are presented as follows.

We first give an approximation property from \cite[Theorem 4.3.8, Proposition 4.1.9]{Brenner_FEMBook_2008}, and then we prove the basic stability and approximation properties for $L^2$ projections.

\begin{lemma} 
	Let $m\geq 1$ be an integer. For any  $T\in\mathcal{T}_h$, $v\in H^m(T)$ and integer $s$ satisfying $0\le s\le m$, there exists $I_{m-1}v\in \mathcal{P}_{m-1}(T)$ such that
	\begin{subequations}
		\begin{align}
		|v-I_{m-1}v|_{s,T}&\le Ch_T^{m-s}|v|_{m,T},\label{I}\\
		\|v-I_{m-1}v\|_{0,\infty,T}&\le Ch_T|v|_{1,\infty,T}.\label{I2}
		\end{align}
	\end{subequations}
\end{lemma}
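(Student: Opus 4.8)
The plan is to realize $I_{m-1}$ as an averaged Taylor polynomial operator in the sense of \cite{Brenner_FEMBook_2008}. First I would fix a ball $B=B(x_0,\rho)\subset T$ with $\rho$ comparable to $h_T$; since any simplex is star-shaped with respect to each of its inscribed balls and, for a constant uniform over the family $\{\mathcal T_h\}$, the mesh is shape regular, the chunkiness parameter $\gamma_T$ of $T$ is bounded. I would then set $I_{m-1}v:=Q^m v$, the Taylor polynomial of $v$ of degree $m-1$ averaged over $B$. Two standard facts about $Q^m$ are used below: it reproduces polynomials, $Q^m p=p$ for all $p\in\mathcal P_{m-1}(T)$; and it is $L^\infty$-stable, $\|Q^m w\|_{0,\infty,T}\le C\|w\|_{0,\infty,B}$ with $C=C(m,d,\gamma_T)$ (this form follows by writing $Q^m w(x)=\int_B w(y)\,\phi(x,y)\,dy$ and bounding the Taylor kernel $\phi$).

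Estimate \eqref{I} is then exactly the Bramble--Hilbert (Deny--Lions) lemma applied on $T$: by \cite[Lemma 4.3.8]{Brenner_FEMBook_2008}, for $v\in H^m(T)$ and $0\le s\le m$ one has $|v-Q^m v|_{s,T}\le C h_T^{m-s}|v|_{m,T}$, with $C$ depending only on $m$, $d$, and $\gamma_T$; this is \eqref{I} with the asserted dependence of the constant.

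For \eqref{I2} I would assume $v\in W^{1,\infty}(T)$ (otherwise the right-hand side is infinite and there is nothing to prove). Since $m\ge 1$ we have $\mathcal P_0(T)\subset\mathcal P_{m-1}(T)$, so $Q^m$ fixes constants; writing $\bar v:=|B|^{-1}\int_B v$, this gives $v-Q^m v=(v-\bar v)-Q^m(v-\bar v)$, hence by the $L^\infty$-stability of $Q^m$ and $B\subset T$,
\[
\|v-Q^m v\|_{0,\infty,T}\le \|v-\bar v\|_{0,\infty,T}+C\|v-\bar v\|_{0,\infty,B}\le (1+C)\,\|v-\bar v\|_{0,\infty,T}.
\]
It then remains to bound $\|v-\bar v\|_{0,\infty,T}$: for $x\in T$ and $y\in B$, convexity of $T$ and the fundamental theorem of calculus along the segment $[y,x]\subset T$ give $|v(x)-v(y)|\le \mathrm{diam}(T)\,|v|_{1,\infty,T}=h_T\,|v|_{1,\infty,T}$, and averaging over $y\in B$ yields $\|v-\bar v\|_{0,\infty,T}\le h_T\,|v|_{1,\infty,T}$. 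Combining the two displays proves \eqref{I2}; alternatively one may simply invoke the $W^{\ell,p}$ form of the estimate in \cite[Chapter 4, Prop.~4.1.9]{Brenner_FEMBook_2008}.

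The only genuinely non-routine points, and hence the main obstacles, are: (i) that $C$ depends on $T$ only through its shape-regularity (chunkiness) parameter rather than the individual simplex — this is handled inside the Bramble--Hilbert lemma by scaling $B$ to a unit ball; and (ii) the $L^\infty$-stability of $Q^m$ over all of $T$, not merely over $B$, which requires a bound on the $x$-derivatives of the Taylor kernel $\phi(x,\cdot)$ for $x\in T$, again uniform in $\gamma_T$. Both are standard and are precisely the content of the cited results of \cite{Brenner_FEMBook_2008}; everything else is the elementary bookkeeping above.
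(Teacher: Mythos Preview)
Your proposal is correct and follows exactly the route the paper indicates: the paper does not prove this lemma but simply cites \cite[Theorem 4.3.8, Proposition 4.1.9]{Brenner_FEMBook_2008}, and your averaged Taylor polynomial $Q^m$ together with the Bramble--Hilbert estimate and the $L^\infty$ argument is precisely the content of those references. You have supplied the details behind the citation; nothing more is needed.
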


\begin{lemma} \label{approximation} 
	Let $s$ be a real number. For any nonnegative integer $j$, let $m$ be a real number satisfying $\frac{1}{2}<m\le j+1$ and let $\ell \in  \{0,1\}$. For all $T\in\mathcal{T}_h$, $E\in\mathcal{E}_h$, it holds
	\begin{subequations}
		\begin{align}
		|\Pi^{o}_{\ell}v|_{j, T}&\le C|v|_{j,T}, &\forall v\in H^j(T),\label{stab_1}\\
		\|\Pi^{\partial}_jv\|_{E}&\le \|v\|_{E},&\forall v\in L^{2}(E),\label{stab_2}\\
		|v-\Pi^{o}_jv|_{s,T}&\le C h_T^{m-s}|v|_{m,T},&\forall v\in H^{m}(T),\ 0\le s \le m,\label{stab_4}\\
		|v-\Pi_0^ov|_{\ell,\infty,T}&\le 
		Ch_T^{1-\ell}|v|_{1,\infty,T},& \forall v\in W^{1,\infty}(T), \label{stab_5}\\
		|v-\Pi^{o}_jv|_{s,\partial T}&\le C h_T^{m-s-1/2}|v|_{m,T},&\forall v\in H^{m}(T), 0\le s+1 \le m,\label{stab_6}\\
		\|w\|_{\partial T}&\le C h_T^{-1/2}\|w\|_{T},&\forall w\in W_h.\label{stab_7}
		\end{align}
	\end{subequations}
\end{lemma}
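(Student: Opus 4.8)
The plan is to prove each of the six bounds by the classical device of mapping $T$ (resp.\ $E$) to a fixed reference simplex, using the shape-regularity of $\mathcal{T}_h$ to keep every constant independent of $T$. Two of the estimates are immediate. Inequality \eqref{stab_2} holds because $\Pi_j^{\partial}$ is the orthogonal projection of $L^2(E)$ onto $\mathcal{P}_j(E)$, hence nonexpansive in the $L^2(E)$ norm. Inequality \eqref{stab_7} is the discrete trace (inverse) inequality for polynomials: writing $\|w\|_{\partial T}^2$ as a sum over the edges of $T$, pulling back to the reference element where $\|\cdot\|_{\partial\widehat T}$ and $\|\cdot\|_{\widehat T}$ are equivalent norms on the finite-dimensional space $\mathcal{P}_k(\widehat T)$, and scaling back; shape regularity ensures the resulting scale factor is $h_T^{-1/2}$ up to a uniform constant.

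For the seminorm stability \eqref{stab_1}, note that when $j>\ell$ the left-hand side vanishes since $\Pi_\ell^o v\in\mathcal{P}_\ell(T)$, so only $j\in\{0,1\}$ with $j\le\ell$ is nontrivial. The case $j=0$ is the $L^2$ contraction property of $\Pi_\ell^o$. For $j=\ell=1$ I would use that $\Pi_1^o$ preserves constants: $|\Pi_1^o v|_{1,T}=|\Pi_1^o(v-c)|_{1,T}$ for any constant $c$; passing to $\widehat T$, where all norms on $\mathcal{P}_1(\widehat T)$ are equivalent, gives $|\Pi_1^o\widehat w|_{1,\widehat T}\le C\|\widehat w\|_{0,\widehat T}$, and choosing $\widehat c$ to be the mean of $\widehat v$ and applying Poincar\'e yields $\|\widehat v-\widehat c\|_{0,\widehat T}\le C|\widehat v|_{1,\widehat T}$; scaling back, the powers of $h_T$ cancel. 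The $L^\infty$ bounds \eqref{stab_5} are equally elementary: for $\ell=1$ the claim reads $|v|_{1,\infty,T}\le C|v|_{1,\infty,T}$ since $\Pi_0^o v$ is constant; for $\ell=0$, $\Pi_0^o v$ is the average of $v$ over $T$, so for each $x\in T$, $|v(x)-\Pi_0^o v|\le\sup_{y\in T}|v(x)-v(y)|\le h_T|v|_{1,\infty,T}$ by the mean value theorem along the segment joining $y$ to $x$ (using convexity of the simplex $T$).

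The core estimate \eqref{stab_4} is a Bramble--Hilbert/Deny--Lions argument. Since $\tfrac12<m\le j+1$, we have $\lceil m\rceil-1\le j$, so $\Pi_j^o$ reproduces $\mathcal{P}_{\lceil m\rceil-1}(T)$; on the reference element this gives $\|\widehat v-\Pi_j^o\widehat v\|_{s,\widehat T}\le C\inf_{\widehat p}\|\widehat v-\widehat p\|_{m,\widehat T}\le C|\widehat v|_{m,\widehat T}$, where the infimum is over the reproduced polynomial space; affine scaling, tracking the $h_T$-powers picked up by the $s$- and $m$-seminorms and with shape regularity controlling the Jacobian, produces the factor $h_T^{m-s}$. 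Finally, \eqref{stab_6} follows by combining \eqref{stab_4} with the scaled trace inequality $\|\phi\|_{s,\partial T}\le C\bigl(h_T^{-1/2}\|\phi\|_{s,T}+h_T^{1/2}|\phi|_{s+1,T}\bigr)$ applied to $\phi=v-\Pi_j^o v$, and then invoking \eqref{stab_4} for each of the two terms (legitimate since $s+1\le m$). The main technical obstacle is the treatment of non-integer $m$ and $s$: the Bramble--Hilbert lemma and the scaling of Sobolev norms must be carried out for Sobolev--Slobodeckij (fractional) seminorms, which requires either an interpolation-space argument between consecutive integer orders or a direct estimate of the Gagliardo seminorm under the affine map; it is precisely here that shape-regularity of $\mathcal{T}_h$ (as opposed to quasi-uniformity) is what keeps $C$ independent of $T$.
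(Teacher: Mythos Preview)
Your proposal is correct and complete, but your route for the two substantive items \eqref{stab_4} and \eqref{stab_5} differs from the paper's. For \eqref{stab_4} the paper does not go through a reference element and Bramble--Hilbert; instead it invokes the polynomial interpolant $I_{m-1}$ of the preceding lemma (which satisfies $|v-I_{m-1}v|_{s,T}\le Ch_T^{m-s}|v|_{m,T}$), writes
\[
|v-\Pi_j^o v|_{s,T}\le |v-I_{m-1}v|_{s,T}+|\Pi_j^o(I_{m-1}v-v)|_{s,T},
\]
and controls the second term by an inverse inequality together with the $L^2$ contraction $\|\Pi_j^o w\|_{0,T}\le\|w\|_{0,T}$. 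Likewise for \eqref{stab_5} the paper first establishes the $L^\infty$ stability $\|\Pi_0^o v\|_{0,\infty,T}\le C\|v\|_{0,\infty,T}$ by a scaling argument and then runs the same split with $I_0$. Your direct arguments (Deny--Lions on $\widehat T$ for \eqref{stab_4}; the one-line mean-value bound for \eqref{stab_5}) are slightly more elementary and, as you note, are better suited to non-integer $m$ and $s$: the paper's proof of \eqref{stab_4} is written for $1\le m\le j+1$ with $m$ integer and relies on the previous interpolation lemma, so the fractional range $\tfrac12<m<1$ is not explicitly covered there. On the other hand, the paper's approach avoids tracking fractional Sobolev seminorms under the affine map, which is the technical cost of your route. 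For \eqref{stab_1}, \eqref{stab_2}, \eqref{stab_6}, and \eqref{stab_7} you and the paper agree; the paper simply remarks that \eqref{stab_1} follows from \eqref{stab_4} (via $|\Pi_\ell^o v|_{j,T}\le|v|_{j,T}+|v-\Pi_\ell^o v|_{j,T}$ in the only nontrivial case $j=\ell=1$), which is the same content as your Poincar\'e argument.
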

\begin{proof} 
	\Cref{stab_1} follows from \Cref{stab_4};  \Cref{stab_2}  follows from the definition of $L^2$ projection;  \Cref{stab_6} follows from \Cref{stab_4} and the  trace inequality;  and \Cref{stab_7} follows from the   trace inequality and inverse inequality. The only thing left is to prove \Cref{stab_4} and \Cref{stab_5}. 
	
	For  \Cref{stab_4}, in view of \Cref{I}, an inverse inequality, and the fact that $\|\Pi_{j}^ov\|_{0,T}\le \|v\|_{0,T}$, for $1\le m\le j+1$ we have
	\begin{align*}
	|v-\Pi_j^ov|_{s,T}&\le |v-I_{m-1}v|_{s,T}+|I_{m-1}v-\Pi_j^ov|_{s,T} \nonumber\\
	&=	  |v-I_{m-1}v|_{s,T}+|\Pi_j^o(I_{m-1}v-v)|_{s,T} \nonumber\\
	&\le   |v-I_{m-1}v|_{s,T}+Ch_T^{-s}\|\Pi_j^o(I_{m-1}v-v)\|_{0,T} \nonumber\\
	&\le   |v-I_{m-1}v|_{s,T}+Ch_T^{-s}\|I_{m-1}v-v\|_{0,T} \nonumber\\
	&\le Ch_T^{m-s}\|v\|_{m,T}.
	\end{align*}
	As for \Cref{stab_5}, $\ell=1$ is obvious and therefore we set $\ell=0$. By a standard scaling argument, the following stability result holds:
	\begin{align} \label{infty}
	\|\Pi_0^ov\|_{0,\infty,T}\le C 	\|v\|_{0,\infty,T}.
	\end{align}
	By an inverse inequality, \eqref{infty}, and \eqref{I2} we get 
	\begin{align*}
	\|v-\Pi_0^ov\|_{0,\infty,T}&\le \|v-I_{0}v\|_{0,\infty,T}+\|I_{0}v-\Pi_0^ov\|_{0,\infty,T} \nonumber\\
	&=\|v-I_{0}v\|_{0,\infty,T}+\|\Pi_0^o(I_{0}v-v)\|_{0,\infty,T} \nonumber\\
	&\le C\|v-I_{0}v\|_{0,\infty,T}\nonumber\\
	&\le Ch_T|v|_{1,\infty,T}.
	\end{align*}
\end{proof}

In addition, we have the following  super-approximation results; a similar result can be found in \cite{MR2485457}. 
\begin{lemma}\label{Super-approximation}
	Let $T\in\mathcal{T}_h,E\subset\partial T$. Then, for any $u_h\in \mathcal  P_{k}(T)$ and $\eta\in W^{1,\infty}(T)$, there holds:
	\begin{subequations}
		\begin{align}
		\|\eta u_h-\Pi^{o}_k(\eta u_h)\|_{T}&\le C h_T|\eta|_{1,\infty,T}\|u_h\|_{T},\label{compo_0T}\\
		|\eta u_h-\Pi^{o}_k(\eta u_h)|_{1,T}&\le C |\eta|_{1,\infty,T}\|u_h\|_{T}, \label{compo_1T}\\
		\|\eta u_h-\Pi^{o}_k(\eta u_h)\|_{\partial T}&\le C h_T^{1/2}|\eta|_{1,\infty,T}\|u_h\|_{T},\label{compo_0TE}\\
		\|\eta u_h-\Pi^{\partial}_k(\eta u_h)\|_{E}&\le C h_T|\eta|_{1,\infty,T}\|u_h\|_{E}.\label{compo_0EE}
		\end{align}
	\end{subequations}
\end{lemma}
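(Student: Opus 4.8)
The plan is to exploit the fact that the $L^2$-projections $\Pi_k^o$ and $\Pi_k^\partial$ act as the identity on polynomials of degree at most $k$, so that multiplying $u_h$ by a \emph{constant} commutes with each projection. First I would fix a point $\bm x_T\in T$ and set $\bar\eta:=\eta(\bm x_T)$. Since the simplex $T$ is convex with diameter $h_T$ and $\eta\in W^{1,\infty}(T)$ is Lipschitz with constant $|\eta|_{1,\infty,T}$, the fundamental theorem of calculus along the segment from $\bm x_T$ to an arbitrary point yields the elementwise $L^\infty$ Poincaré bound $\|\eta-\bar\eta\|_{0,\infty,T}\le h_T|\eta|_{1,\infty,T}$, and hence also $\|\eta-\bar\eta\|_{0,\infty,E}\le h_T|\eta|_{1,\infty,T}$ for every $E\subset\partial T$. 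Because $\bar\eta u_h\in\mathcal P_k(T)$ and $\bar\eta u_h|_E\in\mathcal P_k(E)$, we have $\Pi_k^o(\bar\eta u_h)=\bar\eta u_h$ and $\Pi_k^\partial(\bar\eta u_h)=\bar\eta u_h$ on $E$, so in all four estimates the quantity $\eta u_h-\Pi_k^\bullet(\eta u_h)$ equals $(\eta-\bar\eta)u_h-\Pi_k^\bullet\big((\eta-\bar\eta)u_h\big)$.

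For \eqref{compo_0T} I would use that $\Pi_k^o$ is an $L^2$-contraction (equivalently \eqref{stab_4} with $s=m=0$) together with the Poincaré bound:
\[
\|\eta u_h-\Pi_k^o(\eta u_h)\|_T\le 2\|(\eta-\bar\eta)u_h\|_T\le 2\|\eta-\bar\eta\|_{0,\infty,T}\|u_h\|_T\le Ch_T|\eta|_{1,\infty,T}\|u_h\|_T.
\]
For \eqref{compo_1T} I would split into the unprojected and projected pieces. The product rule gives $|(\eta-\bar\eta)u_h|_{1,T}\le |\eta|_{1,\infty,T}\|u_h\|_T+\|\eta-\bar\eta\|_{0,\infty,T}|u_h|_{1,T}$, and the last term is controlled via the inverse inequality $|u_h|_{1,T}\le Ch_T^{-1}\|u_h\|_T$ and the Poincaré bound, giving $C|\eta|_{1,\infty,T}\|u_h\|_T$. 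For the projected piece, an inverse inequality applied to the polynomial $\Pi_k^o\big((\eta-\bar\eta)u_h\big)$ followed by $L^2$-stability of $\Pi_k^o$ reduces it to $Ch_T^{-1}\|(\eta-\bar\eta)u_h\|_T$, which is again $\le C|\eta|_{1,\infty,T}\|u_h\|_T$ by the Poincaré bound.

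For \eqref{compo_0TE} I would apply the scaled trace inequality $\|w\|_{\partial T}^2\le C\big(h_T^{-1}\|w\|_T^2+h_T|w|_{1,T}^2\big)$ to $w=\eta u_h-\Pi_k^o(\eta u_h)$ and insert the bounds just proved in \eqref{compo_0T} and \eqref{compo_1T}; both contributions are $O\big(h_T^{1/2}|\eta|_{1,\infty,T}\|u_h\|_T\big)$. For \eqref{compo_0EE} I would repeat the argument of \eqref{compo_0T} directly on $E$: since $u_h|_E\in\mathcal P_k(E)$ and $\Pi_k^\partial$ is an $L^2(E)$-contraction (cf.\ \eqref{stab_2}),
\[
\|\eta u_h-\Pi_k^\partial(\eta u_h)\|_E\le 2\|(\eta-\bar\eta)u_h\|_E\le 2\|\eta-\bar\eta\|_{0,\infty,E}\|u_h\|_E\le Ch_T|\eta|_{1,\infty,T}\|u_h\|_E.
\]
None of these steps is genuinely difficult; the only point needing care is keeping the powers of $h_T$ consistent when the inverse and trace inequalities are invoked on shape-regular (but not quasi-uniform) simplices, so that the constants $C$ depend only on the shape-regularity parameter and on $k$, and not on $h_T$ itself.
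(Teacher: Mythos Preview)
Your proof is correct and follows essentially the same route as the paper: subtract off a constant approximation of $\eta$ so that the projections act as the identity on the constant-times-$u_h$ piece, then bound the remainder using the $L^\infty$ Poincar\'e estimate, $L^2$-stability of the projections, an inverse inequality for the $H^1$ term, and the scaled trace inequality for \eqref{compo_0TE}. The only cosmetic difference is that the paper takes the constant to be $\Pi_0^o\eta$ (the mean value, using \eqref{stab_5}) rather than a point value $\eta(\bm x_T)$, and for \eqref{compo_1T} it invokes the $H^1$-stability of $\Pi_k^o$ directly instead of your inverse-inequality-then-$L^2$-stability sequence; both variants are equivalent.
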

\begin{proof}
	We notice that $\eqref{compo_0TE}$  follows from $\eqref{compo_0T}$, $\eqref{compo_1T}$, and the trace inequality. Next, for  $j\in \{0,1\}$, we have 
	\begin{align}
	|\eta u_h-\Pi^{o}_k(\eta u_h)|_{j,T}&\le |\eta u_h-(\Pi^{o}_0\eta) u_h|_{j,T}+|(\Pi^{o}_0\eta) u_h-\Pi^{o}_k(\eta u_h)|_{j,T} \nonumber  \\
	&=|(\eta-\Pi^{o}_0\eta) u_h|_{j,T}+|\Pi^{o}_k((\Pi^{o}_0\eta) u_h-\eta u_h)|_{j,T}  \nonumber \\
	&\le C |(\eta-\Pi^{o}_0\eta) u_h|_{j,T} \nonumber & \textup{by} \  \eqref{stab_1},\\
	&\le C |\eta-\Pi^{o}_0\eta|_{j,\infty}\|u_h\|_{T}+\|\eta-\Pi^{o}_0\eta\|_{0,\infty}|u_h|_{j,T}
	\nonumber\\
	&\le Ch_T^{1-j}|\eta|_{1,\infty,T}\|u_h\|_{T},\nonumber    & \textup{by} \  \eqref{stab_5}.
	\end{align}
	This proves \eqref{compo_0T} and \eqref{compo_1T}.  Similarly, for $E\subset \partial T$, we have 
	\begin{align}
	\|\eta u_h-\Pi^{\partial}_k(\eta u_h)\|_{E}&\le \|\eta u_h-(\Pi^{o}_0\eta) u_h\|_{E}+\|(\Pi^{o}_0\eta) u_h-\Pi^{\partial}_k(\eta u_h)\|_{E} \nonumber\\
	&=\|(\eta-\Pi^{o}_0\eta) u_h\|_{E}+\|\Pi^{\partial}_k((\Pi^{o}_0\eta) u_h-\eta u_h)\|_{E}  \nonumber\\
	&\le C\|(\eta-\Pi^{o}_0\eta) u_h\|_{E} \nonumber& \textup{by} \  \eqref{stab_2},\\
	&\le C \|\eta-\Pi^{o}_0\eta\|_{0,\infty,T}\| u_h\|_{E} \nonumber\\
	&\le C h_T|\eta|_{1,\infty,T}\|u_h\|_{E}\nonumber  & \textup{by} \  \eqref{stab_5}.
	\end{align}
	This proves \eqref{compo_0EE}.
\end{proof}

For the analysis of the low regularity case, we need the following result from \cite{MR3508837}:
\begin{lemma}
	If $\ell\ge 1$ is an integer that is large enough, then there exists an  interpolation operator  $\mathcal{I}^c_h: W_h\times M^o_h\to H^1_0(\Omega)\cap \mathcal {P}^{\mathcal{T}_h}_{k+\ell}$ such that for all $(w_h,\widehat{w}^o_h,v_h,\widehat{v}_h^o)\in [W_h\times M_h^o]^2$,
	for all  $T\in\mathcal T_h$ and  for all $E\in \mathcal E_h$, we have 
	\begin{subequations}
		\begin{align}
		(\mathcal{I}_h^c(w_h,\widehat w^o_h), v_h)_T&=(w_h,v_h)_T, \label{I*1}\\
		\langle\mathcal{I}_h^c(w_h,\widehat{w}^o_h),\widehat v_h^o\rangle_{E}&=\langle \widehat{w}^o_h,\widehat{v}^o_h\rangle_{E},\label{I*2}\\
		\|\nabla\mathcal{I}_h^c(w_h,\widehat{w}^o_h)\|_{\mathcal{T}_h}&\le  C\left(\|\nabla  w_h\|_{\mathcal{T}_h}+\|h_E^{-1/2}( w_h-\widehat{w}^o_h )\|_{\partial \mathcal{T}_h}\right)\label{I*3},\\
		\|w_h-\mathcal{I}_h^c(w_h,\widehat{w}^o_h)\|_{\mathcal{T}_h}&\le  Ch\left(\|\nabla  w_h\|_{\mathcal{T}_h}+\|h_E^{-1/2}( w_h-\widehat{w}^o_h )\|_{\partial \mathcal{T}_h}\right),\label{I*4}
		\end{align}
		where $\mathcal P^{\mathcal{T}_h}_{k+\ell}=\{w_h\in L^2(\Omega): w_h|_T\in\mathcal P_{k+\ell}(T),\ \forall T\in\mathcal{T}_h\}$.
	\end{subequations}
\end{lemma}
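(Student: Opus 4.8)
The plan is to construct $\mathcal{I}_h^c$ explicitly as a \emph{conforming companion operator}: start from a crude node-averaging enrichment of $(w_h,\widehat w_h^o)$ into the continuous Lagrange space $H_0^1(\Omega)\cap\mathcal P^{\mathcal T_h}_{k+\ell}$, and then correct it by local bubble functions so that the two orthogonality relations \eqref{I*1}--\eqref{I*2} hold exactly. The free parameter $\ell$ serves only to guarantee enough degrees of freedom for the corrections: I would fix a Lagrange node set for $\mathcal P_{k+\ell}(T)$ and take $\ell$ large enough that (i) the number of nodes lying strictly inside each cell $T$ is at least $\dim\mathcal P_k(T)$, and (ii) the number of nodes lying strictly inside each edge $E$ is at least $\dim\mathcal P_k(E)$. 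Shape regularity, not quasi-uniformity, is all that is needed, since every estimate below is elementwise and scales correctly.

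\textbf{Step 1 (averaging).} Define a first operator $\mathcal{I}_h^0(w_h,\widehat w_h^o)\in H_0^1(\Omega)\cap\mathcal P^{\mathcal T_h}_{k+\ell}$ by prescribing its Lagrange degrees of freedom: a node strictly inside a cell $T$ receives the value of $w_h|_T$ there; a node lying on an interior edge (or interior vertex) receives a weighted average of the values $w_h|_T$ over the cells sharing it (equivalently one may use $\widehat w_h^o$ at the nodes on interior edges); every node on $\Gamma$ is set to $0$, which is consistent because $\widehat w_h^o|_\Gamma=0$. Standard enriching-operator arguments---writing the nodal jump of $w_h$ across an edge $E$, and the gap between $w_h|_T$ and $\widehat w_h^o|_E$, in terms of $\|w_h-\widehat w_h^o\|_{\partial\mathcal T_h}$ on the two adjacent cells, then using inverse inequalities and element scaling---give
\begin{align*}
\|\nabla\mathcal I_h^0(w_h,\widehat w_h^o)\|_{\mathcal T_h}\le C\big(\|\nabla w_h\|_{\mathcal T_h}+\|h_E^{-1/2}(w_h-\widehat w_h^o)\|_{\partial\mathcal T_h}\big),
\end{align*}
together with the companion $L^2$ bound $\|w_h-\mathcal I_h^0(w_h,\widehat w_h^o)\|_{\mathcal T_h}\le Ch\big(\|\nabla w_h\|_{\mathcal T_h}+\|h_E^{-1/2}(w_h-\widehat w_h^o)\|_{\partial\mathcal T_h}\big)$, obtained by the same localisation plus a Poincaré-type inequality on each element.

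\textbf{Step 2 (bubble corrections).} Add two successive corrections. First, on every interior edge $E\in\mathcal E_h^o$ add an edge-bubble correction $b_E\in\mathcal P_{k+\ell}$ supported on the patch $\omega_E$ of $E$ and vanishing on $\partial\omega_E$ (hence preserving $H^1$-conformity and the zero trace on $\Gamma$) so that $\langle \mathcal I_h^0+\sum_E b_E,\widehat v_h^o\rangle_E=\langle\widehat w_h^o,\widehat v_h^o\rangle_E$ for all $\widehat v_h^o\in M_h^o$; on boundary edges both sides vanish identically, so nothing is required there. The small local linear system for $b_E$ is solvable by the count (ii), and scaling gives $\|b_E\|_E\le C\|\widehat w_h^o-\mathcal I_h^0(w_h,\widehat w_h^o)\|_E$, controlled by Step 1. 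Then, on every cell $T$ add an interior-bubble correction $b_T\in\mathcal P_{k+\ell}(T)$ vanishing on $\partial T$ so that \eqref{I*1} holds; this leaves the edge relations just fixed untouched, and solvability and stability follow from (i) and scaling. Summing the three pieces defines $\mathcal I_h^c$, and bounding each correction by the right-hand side of \eqref{I*3} yields \eqref{I*3} and \eqref{I*4}.

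\textbf{Main obstacle.} The only genuinely delicate point is the stability of the corrections: one must check that $\|\widehat w_h^o-\mathcal I_h^0(w_h,\widehat w_h^o)\|_E$ and the cell residual $\|w_h-\mathcal I_h^0(w_h,\widehat w_h^o)\|_T$ are each bounded by $\|\nabla w_h\|_{\mathcal T_h}+\|h_E^{-1/2}(w_h-\widehat w_h^o)\|_{\partial\mathcal T_h}$ with the correct powers of $h$, uniformly in the shape-regularity constant and independently of the weights chosen in the nodal averaging, and simultaneously that zeroing the $\Gamma$-nodes does not destroy the cell-moment matching on boundary elements---which is precisely where the largeness of $\ell$ is used. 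All of this is carried out in detail in \cite{MR3508837}, so I would either cite it directly or reproduce the two-line scaling estimates; no idea beyond the companion-operator construction is needed.
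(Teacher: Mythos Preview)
The paper does not prove this lemma at all: it is stated without proof and attributed directly to \cite{MR3508837}. Your proposal, by contrast, sketches the actual construction (averaging enrichment followed by edge- and cell-bubble corrections to enforce the moment conditions), which is precisely the companion-operator technique carried out in that reference. So your approach is correct and in fact more informative than the paper's treatment; since you already identify \cite{MR3508837} as the source, simply citing it---as the paper does---would suffice.
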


\subsection{Proof of the stability of \eqref{HDG_full_discrete}}
Next, we present the stability of the above HDG method for the convection dominated Dirichlet boundary control problem. We follow a similar strategy  to \cite{MR2485457,MR3342199}. We first collect some basic equalities and inequalities, which are used frequently in our paper.

\begin{lemma}
	For all $(w_h,\widehat w_h^o)\in W_h\times M_h^o$, we have 
	\begin{subequations}
		\begin{gather}
		(w_h, \bm \beta \cdot \nabla w_h)_{\mathcal T_h} = \frac 1 2 \langle  \bm{\beta}\cdot \bm n w_h, w_h \rangle_{\partial\mathcal T_h} - \frac 1 2 (\nabla \cdot\bm \beta w_h, w_h)_{\mathcal T_h},\label{beta1}\\
		\frac 1 2 \langle \bm\beta \cdot\bm{n}  w_h, w_h \rangle_{\partial \mathcal{T}_h} -\langle \widehat{w}^o_h,\bm \beta\cdot\bm n w_h\rangle_{\partial\mathcal{T}_h} = \frac 1 2 \langle \bm\beta \cdot\bm{n}  (w_h - \widehat w_h^o), w_h - \widehat w_h^o \rangle_{\partial \mathcal{T}_h}, \label{beta2}\\
		\|w_{h}\|^2_{\mathcal{T}_h}\le C \|\nabla w_{h}\|^2_{\mathcal{T}_h}+C\sum_{T\in\mathcal{T}_h}h_T^{-1}\|w_{h}-\widehat{w}^o_{h}\|^2_{\partial T}. \label{HDG-poin}
		\end{gather}
	\end{subequations}
\end{lemma}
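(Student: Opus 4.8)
The plan is to establish the three statements separately; all are elementary consequences of elementwise integration by parts, the single‑valuedness of the interior numerical trace, and the conforming interpolant $\mathcal I_h^c$. First I would prove \eqref{beta1} element by element: on a fixed $T\in\mathcal T_h$ the integrand can be written $w_h\,\bm\beta\cdot\nabla w_h=\tfrac12\,\bm\beta\cdot\nabla(w_h^2)$, and integration by parts on $T$ gives $\tfrac12\langle\bm\beta\cdot\bm n\,w_h,w_h\rangle_{\partial T}-\tfrac12(\nabla\cdot\bm\beta\,w_h,w_h)_T$; summing over $T\in\mathcal T_h$ yields the claim. The manipulation is legitimate because $w_h|_T$ is a polynomial and, by the standing assumptions on $\bm\beta$, both $\bm\beta$ and $\nabla\cdot\bm\beta$ lie in $L^\infty(\Omega)$.

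For \eqref{beta2} I would expand the right‑hand side on each $\partial T$, namely $\tfrac12\langle\bm\beta\cdot\bm n(w_h-\widehat w_h^o),w_h-\widehat w_h^o\rangle_{\partial T}=\tfrac12\langle\bm\beta\cdot\bm n\,w_h,w_h\rangle_{\partial T}-\langle\widehat w_h^o,\bm\beta\cdot\bm n\,w_h\rangle_{\partial T}+\tfrac12\langle\bm\beta\cdot\bm n\,\widehat w_h^o,\widehat w_h^o\rangle_{\partial T}$, and then sum over $\mathcal T_h$. The last term disappears because on each interior edge the two adjacent elements contribute with opposite outward normals while $\widehat w_h^o$ is single valued, and on boundary edges $\widehat w_h^o\equiv 0$ since $\widehat w_h^o\in M_h^o$; this is precisely the relation $\langle\bm\beta\cdot\bm n,\widehat w_h^o\widehat w_h^o\rangle_{\partial\mathcal T_h}=0$ already used in the proof of \Cref{sys}, and it gives \eqref{beta2}.

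For the discrete Poincar\'e inequality \eqref{HDG-poin} I would use the conforming interpolant $\mathcal I_h^c$ from the preceding lemma. Splitting $w_h=\big(w_h-\mathcal I_h^c(w_h,\widehat w_h^o)\big)+\mathcal I_h^c(w_h,\widehat w_h^o)$ and applying the triangle inequality, the first term is bounded by \eqref{I*4}, that is by $Ch\big(\|\nabla w_h\|_{\mathcal T_h}+\|h_E^{-1/2}(w_h-\widehat w_h^o)\|_{\partial\mathcal T_h}\big)$. Since $\mathcal I_h^c(w_h,\widehat w_h^o)\in H_0^1(\Omega)$, the continuous Poincar\'e inequality on $\Omega$ bounds the second term by $C\|\nabla\mathcal I_h^c(w_h,\widehat w_h^o)\|_{\mathcal T_h}$ (the broken gradient of this piecewise polynomial coincides with its weak gradient), which by \eqref{I*3} is in turn bounded by $C\big(\|\nabla w_h\|_{\mathcal T_h}+\|h_E^{-1/2}(w_h-\widehat w_h^o)\|_{\partial\mathcal T_h}\big)$. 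Using $h\le L$ to absorb the mesh‑size factor, squaring, and replacing $h_E^{-1}$ by $h_T^{-1}$ up to a shape‑regularity constant (each $E\subset\partial T$ satisfies $h_E\simeq h_T$) converts $\|h_E^{-1/2}(w_h-\widehat w_h^o)\|_{\partial\mathcal T_h}^2$ into $\sum_{T}h_T^{-1}\|w_h-\widehat w_h^o\|_{\partial T}^2$ and gives \eqref{HDG-poin}.

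None of these steps is a genuine obstacle; the only points needing care are that \eqref{HDG-poin} rests on the nontrivial existence of $\mathcal I_h^c$ with properties \eqref{I*3}--\eqref{I*4} (quoted from \cite{MR3508837}) together with the applicability of the continuous Poincar\'e inequality to the $H_0^1$ function $\mathcal I_h^c(w_h,\widehat w_h^o)$, and the clean bookkeeping of the edge‑ versus element‑diameter normalization through shape regularity, which is why we only need $\{\mathcal T_h\}$ shape regular rather than quasi‑uniform.
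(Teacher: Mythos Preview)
Your arguments for \eqref{beta1} and \eqref{beta2} are correct and coincide with the paper's: it too invokes elementwise integration by parts for \eqref{beta1} and the vanishing of $\langle\bm\beta\cdot\bm n\,\widehat w_h^o,\widehat w_h^o\rangle_{\partial\mathcal T_h}$ for \eqref{beta2}. For \eqref{HDG-poin} the paper does not give a proof at all and simply refers the reader to \cite[page 354]{MR3440284}; your route through the conforming lift $\mathcal I_h^c$, combining \eqref{I*4} with the continuous Poincar\'e inequality applied to $\mathcal I_h^c(w_h,\widehat w_h^o)\in H_0^1(\Omega)$ together with \eqref{I*3}, is a correct and self-contained alternative that relies only on ingredients already quoted in the paper. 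The bookkeeping you flag (edge versus element diameters via shape regularity, absorbing the factor $h$ by the domain diameter) is exactly what is needed, so there is no gap.
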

The identity \eqref{beta1} can be obtained by integration by parts and  the proof of \eqref{beta2} follows from the fact $ \langle \bm\beta \cdot\bm{n}  \widehat w_h^o,  \widehat w_h^o \rangle_{\partial \mathcal{T}_h}=0$. For the last inequality \eqref{HDG-poin}, we refer to \cite[page 354]{MR3440284} for the proof.

Next, we define some seminorms:
\begin{definition}
	For all $(\bm q_h, y_h, \widehat y_h^o) \in \bm V_h\times W_h\times M_h^o$, define 
	\begin{subequations}
		\begin{align}
		\norm{(y_h,\widehat y_h^o)}_{W,w}^2&:=\varepsilon\|\nabla  y_h\|^2_{\mathcal{T}_h}+\|\tau^{1/2}( y_h-\widehat{y}_h^o)\|^2_{\partial\mathcal{T}_h}+\|\bar{\sigma}^{1/2}y_{h}\|^2_{\mathcal{T}_h},\label{def_Ww}\\
		\norm{(y_h,\widehat y_h^o)}_{W}^2&:=\varepsilon\|\nabla  y_h\|^2_{\mathcal{T}_h}+\|\tau^{1/2}( y_h-\widehat{y}_h^o)\|^2_{\partial\mathcal{T}_h}+\|(\beta_0+\bar{\sigma})^{1/2}y_{h}\|^2_{\mathcal{T}_h},\label{def_W}\\
		\norm{(\bm{q}_h,y_h,\widehat y_h^o)}_{w}^2&:=\varepsilon^{-1}\|\bm q_h\|^2_{\mathcal{T}_h}+\norm{(y_h,\widehat y_h^o)}_{W,w}^2,\label{def_trip_w}\\
		\norm{(\bm{q}_h,y_h,\widehat y_h^o)}^2&:=\varepsilon^{-1}\|\bm q_h\|^2_{\mathcal{T}_h}+\norm{(y_h,\widehat y_h^o)}_{W}^2.\label{def_trip}
		\end{align}
	\end{subequations}
\end{definition}
%We notice by \textbf{T1}, \textbf{T2}, \textbf{T3}, \textbf{T2$^*$}, and \textbf{T3$^*$}, there holds
%\begin{eqnarray}
%&\tau|_{\partial T}-\alpha_1\varepsilon h_T^{-1}-\alpha_2\|\bm \beta\cdot\bm n_T\|_{0,\infty,\partial T}
%\ge 0,\ \ \ \ \|\tau\|_{0,\partial T,\infty}\le C (\|\bm \beta\cdot\bm n_T\|_{0,\infty,\partial T}+\varepsilon h_T^{-1}).\nonumber
%\end{eqnarray}

It is easy to see that the seminorm $\norm{(\cdot,\cdot)}_{W}$ is a norm since $\beta_0>0$, hence $\norm{(\cdot,\cdot,\cdot)}$ is also a norm. To prove the  seminorms  $\norm{(\cdot,\cdot)}_{W,w}$ and $\norm{(\cdot,\cdot,\cdot)}_w$  are norms, we just need to show $\norm{(\cdot,\cdot)}_{W,w}$  is a norm.
\begin{lemma} \label{norm} 
	$\norm{(\cdot,\cdot)}_{W,w}$  is a norm  for the space $W_h\times M_h^o$.
\end{lemma}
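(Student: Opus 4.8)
The plan is to first note that $\norm{(\cdot,\cdot)}_{W,w}$ is clearly a seminorm on $W_h\times M_h^o$: it is the $\ell^2$-combination of the three seminorms appearing in \eqref{def_Ww}, equivalently the norm of a linear image of $(y_h,\widehat y_h^o)$ into a product of $L^2$ spaces, so homogeneity and the triangle inequality are immediate. Hence the only thing to prove is positive definiteness, i.e.\ that $\norm{(y_h,\widehat y_h^o)}_{W,w}=0$ implies $(y_h,\widehat y_h^o)=(0,0)$.

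So I would assume the seminorm vanishes, which makes each of the three nonnegative terms in \eqref{def_Ww} equal to zero. Since $\varepsilon>0$, the first term gives $\nabla y_h=0$ on every $T\in\mathcal T_h$, so $y_h$ is piecewise constant. By \eqref{def_tau} we have $\tau\ge\varepsilon h_T^{-1}>0$ on $\partial T$, so the second term forces $y_h=\widehat y_h^o$ on $\partial T$ for every $T$. I would then propagate the constant: on an interior edge $E\in\mathcal E_h^o$ shared by $T_1$ and $T_2$, the single-valued $\widehat y_h^o|_E$ equals both the constant $y_h|_{T_1}$ and the constant $y_h|_{T_2}$, so these constants agree; since $\Omega$ is connected and $\mathcal T_h$ is a conforming simplex partition, the facet-adjacency graph of $\mathcal T_h$ is connected, and hence $y_h\equiv c$ for a single constant $c$. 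Finally, choosing any $T$ with an edge $E\subset\Gamma$ (one exists since the bounded domain $\Omega$ is covered by $\mathcal T_h$), the constraint $\mu|_\Gamma=0$ in the definition of $M_h^o$ gives $\widehat y_h^o|_E=0$, and combined with $y_h=\widehat y_h^o$ on $E$ this yields $c=0$; thus $y_h\equiv 0$, and then $\widehat y_h^o=y_h=0$ on all of $\mathcal E_h$.

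The point worth emphasizing — and the reason this small lemma is isolated rather than left to the reader — is that the reaction term $\|\bar\sigma^{1/2}y_h\|_{\mathcal T_h}$ contributes nothing to the argument when $\sigma_0=0$, so positive definiteness cannot be obtained term by term: it has to be recovered from the facet penalty $\|\tau^{1/2}(y_h-\widehat y_h^o)\|_{\partial\mathcal T_h}$ (which is genuinely positive only because of the $\varepsilon h_T^{-1}$ contribution to $\tau$) together with the homogeneous boundary constraint encoded in $M_h^o$ and the connectedness of the mesh. That combination, rather than any single estimate, is the (mild) crux here; everything else is bookkeeping.
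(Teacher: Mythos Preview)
Your proof is correct and follows essentially the same route as the paper: reduce to positive definiteness, use $\varepsilon>0$ to get $y_h$ piecewise constant, use $\tau>0$ to force $y_h=\widehat y_h^o$ on faces, then combine connectedness with the boundary condition $\widehat y_h^o|_\Gamma=0$ to conclude. The paper's argument is terser (it leaves the connectedness step implicit), but the logic is identical.
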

\begin{proof} 
	It is obvious that we only need to show that $\norm{(y_h,\widehat y_h^o)}_{W,w}=0$ implies $\widehat{y}_h^o|_{\Gamma}=y_h=0$. This is true because  $y_h$ is piecewise constant on $\mathcal{T}_h$ and $y_h=\widehat{y}_h^o$ on $\mathcal{E}_h$;  therefore, $y_h=\widehat{y}_h^o$ are constants. Since $\widehat{y}_h^o|_{\Gamma}=0$, we have  $y_h=\widehat{y}_h^o=0$.
\end{proof}

%%
%\begin{lemma}\label{lemma3.4} \cite{MR1974504} 
%	Let $v\in \prod_{T\in\mathcal{T}_h}H^1(T)$,
%	then the following piesewise Poincar\'{e}-Friedrichs' inequality holds
%	\begin{align*}
%	\|v\|^2_{\mathcal{T}_h}\le C
%	\left(
%	 \|\nabla  v\|^2_{\mathcal{T}_h}
%	 +\langle v,1 \rangle_{\Gamma}^2
%	 +\sum_{E\in\mathcal{E}_h^o}h_E^{-1}\|\Pi_0^{\partial} [v]\|^2_{E}
%	 \right).
%	\end{align*}
%	
%\end{lemma}
%
%Using \Cref{lemma3.4}, we can get the following inequality, which will be used repeatedly in our analysis.
%\begin{lemma} 
%	For all $ (y_{h},y_h^o)\in W_h\times\widehat{W}_h^o$, we have the following inequality
%	\begin{align}\label{HDG-poin}
%	\|y_{h}\|^2_{\mathcal{T}_h}\le C \|\nabla y_{h}\|^2_{\mathcal{T}_h}+C\sum_{T\in\mathcal{T}_h}h_T^{-1}\|y_{h}-\widehat{y}^o_{h}\|^2_{\partial T}.
%	\end{align}
%	%therefore,
%	%\begin{eqnarray}
%	%\varepsilon\|y_{h}\|^2_{\mathcal{T}_h}&\le C& \norm{(y_h,\widehat y_h^o)}_{V,w},\label{p1}\\
%	%(\varepsilon+\beta_0+\sigma_0)\|y_{h}\|^2_{\mathcal{T}_h}&\le C& \norm{(y_h,\widehat y_h^o)}_{V}.\label{p2}
%	%\end{eqnarray}
%\end{lemma}

%\begin{proof} 
%	For all $(y_{h},y_h^o)\in W_h\times\widehat{W}_h^o$, using  \Cref{lemma3.4},  we  get
%	\begin{align}
%	\|y_{h}\|^2_{\mathcal{T}_h}&\le C \|\nabla  y_{h}\|^2_{\mathcal{T}_h}+C\sum_{E\in\mathcal{E}_h}h_E^{-1}\|[y_{h}]\|^2_{E}\\
%	%
%	&=C\|\nabla  y_{h}\|^2_{\mathcal{T}_h}+C\sum_{E\in\mathcal{E}_h}h_E^{-1}\|[y_{h}-\widehat{y}^o_{h}]\|^2_{E},\nonumber
%	\end{align}
%	then our desired results are followed immedteatly.
%\end{proof}

\begin{lemma} (Stability in weak norm) \label{stab-weak} 
	For all $(\bm{q}_h,y_h,\widehat y_h^o)\in \bm{V}_h\times W_h\times M_h^o$, the following stability results hold:
	\begin{subequations}
		\begin{align}
		\sup_{(\bm r_h,w_h,\widehat w_h^o)\in \bm V_h\times W_h\times M_h^o}\frac{\mathcal B_1(\bm{q}_h,y_h,\widehat y_h^o;\bm r_h,w_h,\widehat w_h^o)}{\norm{(\bm r_h,w_h,\widehat w_h^o)}_{w}}\ge C   \norm{(\bm{q}_h,y_h,\widehat y_h^o)}_{w}, \label{weak-stab-1} \\
		\sup_{(\bm r_h,w_h,\widehat w_h^o)\in \bm{V}_h\times W_h\times M_h^o}\frac{\mathcal B_2(\bm{q}_h,y_h,\widehat y_h^o;\bm r_h,w_h,\widehat w_h^o)}{\norm{(\bm r_h,w_h,\widehat w_h^o)}_{w}}\ge C   \norm{(\bm{q}_h,y_h,\widehat y_h^o)}_{w}\label{weak-stab-3}.
		\end{align}
	\end{subequations}
\end{lemma}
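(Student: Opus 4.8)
The statement is a pair of discrete inf-sup (in fact coercivity-type) conditions, and the plan is to prove each by exhibiting, for a fixed triple $(\bm q_h, y_h, \widehat y_h^o)$, an explicit test function built linearly from it. For \eqref{weak-stab-1} I would produce $(\bm r_h, w_h, \widehat w_h^o) \in \bm V_h\times W_h\times M_h^o$ with
\[
\mathcal B_1(\bm q_h, y_h, \widehat y_h^o; \bm r_h, w_h, \widehat w_h^o) \ge c\,\norm{(\bm q_h, y_h, \widehat y_h^o)}_w^2, \qquad \norm{(\bm r_h, w_h, \widehat w_h^o)}_w \le C\,\norm{(\bm q_h, y_h, \widehat y_h^o)}_w,
\]
from which \eqref{weak-stab-1} follows by division, and \eqref{weak-stab-3} is then obtained by the same computation with $\mathcal B_2$. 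The construction needs only \textbf{(A1)} (so that $\norm{\bar\sigma^{1/2}\cdot}_{\mathcal T_h}$ is defined); \textbf{(A2)} plays no role in the weak norm. I would follow the weighted-energy idea of \cite{MR2485457,MR3342199}, adapted to the present lower-cost $\mathcal P_k$ spaces.

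\emph{Step 1 (diagonal test).} First I would evaluate $\mathcal B_1(\bm q_h, y_h, \widehat y_h^o; \bm q_h, -y_h, -\widehat y_h^o)$. The sign reversal on the two scalar slots makes the mixed couplings $-(y_h, \nabla\cdot\bm q_h)_{\mathcal T_h} + \langle\widehat y_h^o, \bm q_h\cdot\bm n\rangle_{\partial\mathcal T_h}$ cancel and turns the $\tau_1$-term into $+\norm{\tau_1^{1/2}(y_h - \widehat y_h^o)}_{\partial\mathcal T_h}^2$. Using \eqref{beta1}-\eqref{beta2} on the convection terms and collecting the reaction term, the hyperbolic boundary piece merges with the stabilization via $\tau_1 - \tfrac12\bm\beta\cdot\bm n = \tau$ (compare \eqref{def_tau1}, \eqref{def_tau}), giving
\[
\mathcal B_1(\bm q_h, y_h, \widehat y_h^o; \bm q_h, -y_h, -\widehat y_h^o) = \varepsilon^{-1}\norm{\bm q_h}_{\mathcal T_h}^2 + \norm{\tau^{1/2}(y_h - \widehat y_h^o)}_{\partial\mathcal T_h}^2 + \norm{\bar\sigma^{1/2}y_h}_{\mathcal T_h}^2 = \norm{(\bm q_h, y_h, \widehat y_h^o)}_w^2 - \varepsilon\norm{\nabla y_h}_{\mathcal T_h}^2 .
\]

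\emph{Step 2 (recovering the broken gradient).} Step 1 misses exactly $\varepsilon\norm{\nabla y_h}_{\mathcal T_h}^2$, and supplying it is the crux. Since $\nabla y_h|_T$ is a polynomial of degree $< k$ (if $k=0$ it is $0$ and this step is vacuous), $(\varepsilon\nabla y_h, 0, 0) \in \bm V_h\times W_h\times M_h^o$, and after integrating by parts in $-(y_h, \nabla\cdot(\varepsilon\nabla y_h))_{\mathcal T_h}$,
\[
\mathcal B_1(\bm q_h, y_h, \widehat y_h^o; \varepsilon\nabla y_h, 0, 0) = \varepsilon\norm{\nabla y_h}_{\mathcal T_h}^2 + (\bm q_h, \nabla y_h)_{\mathcal T_h} - \varepsilon\langle y_h - \widehat y_h^o, \nabla y_h\cdot\bm n\rangle_{\partial\mathcal T_h}.
\]
Young's inequality controls $(\bm q_h, \nabla y_h)_{\mathcal T_h}$ by $\tfrac18\varepsilon\norm{\nabla y_h}_{\mathcal T_h}^2 + C\varepsilon^{-1}\norm{\bm q_h}_{\mathcal T_h}^2$, and for the jump term I would apply the local inverse trace inequality \eqref{stab_7} to $\nabla y_h$ componentwise together with $\tau \ge \varepsilon h_T^{-1}$ on $\partial T$ to get $\varepsilon\,|\langle y_h - \widehat y_h^o, \nabla y_h\cdot\bm n\rangle_{\partial\mathcal T_h}| \le \tfrac18\varepsilon\norm{\nabla y_h}_{\mathcal T_h}^2 + C\norm{\tau^{1/2}(y_h - \widehat y_h^o)}_{\partial\mathcal T_h}^2$; hence
\[
\mathcal B_1(\bm q_h, y_h, \widehat y_h^o; \varepsilon\nabla y_h, 0, 0) \ge \tfrac34\varepsilon\norm{\nabla y_h}_{\mathcal T_h}^2 - C_\ast\big(\varepsilon^{-1}\norm{\bm q_h}_{\mathcal T_h}^2 + \norm{\tau^{1/2}(y_h - \widehat y_h^o)}_{\partial\mathcal T_h}^2\big).
\]

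\emph{Step 3 (combination, and $\mathcal B_2$).} I would then take $(\bm r_h, w_h, \widehat w_h^o) = (\bm q_h + \theta\varepsilon\nabla y_h,\, -y_h,\, -\widehat y_h^o)$ with $\theta \in (0,1]$ fixed so small that $\theta \le 1/(2C_\ast)$; adding $\theta$ times the inequality of Step 2 to the identity of Step 1 gives $\mathcal B_1(\bm q_h, y_h, \widehat y_h^o; \bm r_h, w_h, \widehat w_h^o) \ge c\,\norm{(\bm q_h, y_h, \widehat y_h^o)}_w^2$, while a triangle inequality and $\theta \le 1$ give $\norm{(\bm r_h, w_h, \widehat w_h^o)}_w \le C\,\norm{(\bm q_h, y_h, \widehat y_h^o)}_w$, proving \eqref{weak-stab-1}. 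For \eqref{weak-stab-3} I would repeat Steps 1–3 with $\mathcal B_2$ and the same test function: the differences are $\tau_1 \to \tau_2$, $\sigma \to \sigma + \nabla\cdot\bm\beta$ and a sign change in the convection terms, and because $\tau_2 + \tfrac12\bm\beta\cdot\bm n = \tau$ and the effective reaction is again $\bar\sigma$, Step 1's identity and Step 2's bound come out unchanged. (Alternatively, \Cref{sys} rewrites \eqref{weak-stab-3} as the inf-sup of $\mathcal B_1$ in its second argument, handled identically.) The main obstacle is exactly Step 2: the diagonal energy test controls $\varepsilon^{-1}\norm{\bm q_h}^2$, $\norm{\tau^{1/2}(y_h - \widehat y_h^o)}^2$ and $\norm{\bar\sigma^{1/2}y_h}^2$ but not $\varepsilon\norm{\nabla y_h}^2$, so one must add the auxiliary test $(\varepsilon\nabla y_h,0,0)$ and kill its spurious jump term $\varepsilon\langle y_h - \widehat y_h^o, \nabla y_h\cdot\bm n\rangle$ using only a local inverse trace bound and $\tau \ge \varepsilon h_T^{-1}$ — which is what removes the need for the global inverse inequality and the quasi-uniform mesh assumption of earlier works.
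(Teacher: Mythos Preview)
Your proposal is correct and follows essentially the same approach as the paper: the diagonal test $(\bm q_h,-y_h,-\widehat y_h^o)$ yielding the identity of Step~1, the auxiliary test $(\varepsilon\nabla y_h,0,0)$ to recover $\varepsilon\norm{\nabla y_h}^2_{\mathcal T_h}$ with the jump term controlled via the local inverse trace bound \eqref{stab_7} and $\tau\ge\varepsilon h_T^{-1}$, and then a linear combination. The only cosmetic difference is that the paper scales the diagonal test by a large constant $(\tfrac12+C_0)$ while you scale the auxiliary test by a small $\theta$; these are equivalent, and your observation that the same computation handles $\mathcal B_2$ (via $\tau_2+\tfrac12\bm\beta\cdot\bm n=\tau$) matches the paper's remark that ``the second can be obtained by the same argument.''
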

\begin{proof}
	We only prove the first inequality; the second can be obtained by the same argument. First, let $(\bm r_h,w_h,\widehat w_h^o) = (q_h,-y_h,-\widehat y^o_h)$ in the definition of $\mathcal B_1$ in \eqref{def_B1} to get
	\begin{align}
	\hspace{-1em}&\hspace{-1em}\mathcal B_1(\bm q_h,y_h,\widehat y^o_h;\bm q_h,-y_h,-\widehat y^o_h)\nonumber\\
	&=\varepsilon^{-1}\norm{\bm{q}_h}_{\mathcal{T}_h}^2+\langle \tau_1 (y_h-\widehat{y}^o_h), y_h-\widehat{y}^o_h\rangle_{\partial \mathcal{T}_h} \nonumber\\
	&\quad-( y_h,\bm\beta \nabla  y_h)_{\mathcal{T}_h}+\langle \widehat{y}^o_h,\bm \beta\cdot\bm n y_h\rangle_{\partial\mathcal{T}_h}
	+(\sigma y_h,y_h)_{\mathcal{T}_h}\nonumber\\
	&=\varepsilon^{-1}\norm{\bm{q}_h}_{\mathcal{T}_h}^2+\langle \tau_1 (y_h-\widehat{y}^o_h), y_h-\widehat{y}^o_h\rangle_{\partial \mathcal{T}_h} \nonumber\\
	&\quad -\frac 1 2 \langle \bm\beta \cdot\bm{n}  y_h, y_h \rangle_{\partial \mathcal{T}_h} +\langle \widehat{y}^o_h,\bm \beta\cdot\bm n y_h\rangle_{\partial\mathcal{T}_h}
	+((\sigma+\frac{1}{2}\nabla\cdot\bm\beta )y_h,y_h)_{\mathcal{T}_h}  & \textup{by} \  \eqref{beta1},\nonumber\\
	&=\varepsilon^{-1}\norm{\bm{q}_h}_{\mathcal{T}_h}^2+\langle \tau_1 (y_h-\widehat{y}^o_h), y_h-\widehat{y}^o_h\rangle_{\partial \mathcal{T}_h} \nonumber\\
	&\quad -\frac 1 2 \langle \bm\beta \cdot\bm{n}  (y_h - \widehat y_h^o), y_h-\widehat y_h^o \rangle_{\partial \mathcal{T}_h} 
	+((\sigma+\frac{1}{2}\nabla\cdot\bm\beta )y_h,y_h)_{\mathcal{T}_h}  & \textup{by} \  \eqref{beta2},\nonumber\\
	&=\varepsilon^{-1}\norm{\bm{q}_h}_{\mathcal{T}_h}^2+\norm{\sqrt{\tau} (y_h-\widehat{y}^o_h)}_{\partial \mathcal{T}_h}^2  +\|\bar{{\sigma}}^{1/2}y_h\|^2_{\mathcal{T}_h}   & \textup{by} \  \eqref{sigma}.\label{w1}
	\end{align}
	Next, let $(\bm r_h,w_h,\widehat w_h^o) = (\varepsilon\nabla y_h,0,0)$ to get
	\begin{align}\label{w2}
	\hspace{1em}&\hspace{-1em}\mathcal B_1(\bm{q}_h,y_h,\widehat y_h^o;\varepsilon\nabla y_h,0,0)\nonumber\\
	&=(\bm q_h,\nabla y_h)_{\mathcal{T}_h} -\varepsilon( y_h,\nabla \cdot\nabla y_h)_{\mathcal{T}_h} +\varepsilon\langle \widehat y^o_h,\nabla y_h\cdot\bm n\rangle_{\partial\mathcal T_h} \nonumber\\
	&=\varepsilon\|\nabla  y_h\|^2_{\mathcal{T}_h}+\varepsilon\langle \widehat y^o_h-y_h,\nabla y_h\cdot\bm n\rangle_{\partial\mathcal T_h}+(\bm q_h,\nabla y_h)_{\mathcal{T}_h}\nonumber\\
	&\ge \varepsilon\|\nabla  y_h\|^2_{\mathcal{T}_h} - \varepsilon^{1/2} h^{-1/2}   \|\widehat y^o_h-y_h\|_{\partial\mathcal T_h} \varepsilon^{1/2}\|\nabla y_h\|_{\mathcal T_h} - \|\bm q_h\|_{\mathcal T_h} \|\nabla y_h\|_{\mathcal{T}_h}\nonumber\\
	&\ge\frac{\varepsilon}{2}\|\nabla  y_h\|^2_{\mathcal{T}_h}-C_0(\|\tau^{1/2}( \widehat y^o_h-y_h)\|^2_{\partial\mathcal{T}_h}+\varepsilon^{-1}\|\bm{q}_h\|^2_{\mathcal{T}_h}),
	\end{align}
	where $C_0$ is a fixed positive constant. The definitions of $\norm{(\cdot,\cdot,\cdot)}$ in \eqref{def_trip} and $\norm{(\cdot,\cdot,\cdot)}_w$ in \eqref{def_trip_w} imply  
	\begin{align}
	\norm{(\varepsilon\nabla y_h,0,0)}=
	\norm{(\varepsilon\nabla y_h,0,0)}_w
	\le C\norm{(\bm q_h,y_h,\widehat{y}_h^o)}_w.\label{addition0}
	\end{align}
	Finally, we take  $(\bm r_h,w_h,\widehat w^o_h)=(\frac{1}{2}+C_0)(\bm q_h,-y_h,-\widehat y_h^o)+(\varepsilon\nabla y_h,0,0)$ to obtain
	\begin{align}
	\hspace{1em}&\hspace{-1em}\mathcal B_1(\bm q_h,y_h,\widehat y^o_h;\bm r_h,w_h,\widehat w^o_h)\nonumber\\
	&\ge \frac{1}{2}\norm{(\bm q_h,y_h,\widehat y_h^o)}^2_w       & \textup{by} \  \eqref{w1}  \ \textup{and} \  \eqref{w2},\nonumber\\
	&\ge C 
	\norm{(\bm q_h,y_h,\widehat y_h^o)}_w\norm{(\bm r_h,w_h,\widehat w_h^o)}_w & \textup{by} \  \eqref{addition0}. 
	\end{align}
	This  completes our proof. 
\end{proof}
For later use, by \eqref{addition0}, for any $(\bm q_h, y_h, \widehat y_h^o)\in \bm V_h\times W_h\times M_h^o$, we have 
\begin{align}
\norm{(\bm r_h,w_h,\widehat w_h^o)}\le C
\norm{(\bm q_h,y_h,\widehat y^o_h)}+\norm{(\varepsilon\nabla y_h,0,0)}\le C \norm{(\bm q_h,y_h,\widehat y^o_h)}
.\label{addition}
\end{align}

\begin{remark}
	The existence of a unique solution to the HDG discretization \eqref{HDG_full_discrete} of the optimality system now follows similarly to \cite{HuMateosSinglerZhangZhang1}; we omit the details. Also, to obtain the $L^2$ error estimates for the state $y_h$, \Cref{stab-weak} is not sufficient since the effective reaction term $\bar{\sigma}^{1/2}$ can equal zero at some points; therefore,  it is possible for the term $\|\bar{\sigma}^{1/2} y_h\|_{\mathcal{T}_h}$ in the definition of $\norm{(\bm q_h,y_h,\widehat{y}_h^o)}_w$ to equal zero for some $y_h$. Therefore, we need a refined analysis technique to derive a strong stability result that contains the norm  $\|y_h\|_{\mathcal{T}_h}$.
\end{remark}

\begin{theorem}\label{LBB} (Stability in strong norm) 	
	If assumptions  \textbf{(A1)} and  \textbf{(A2)} hold,  then there exists $h_0$, independent of $\varepsilon$,  such that the following stability  results hold: for all $(\bm{q}_h,y_h,\widehat y_h^o)\in \bm{V}_h\times W_h\times M_h^o$ with $h\le h_0$, 
	\begin{subequations}
		\begin{align}
		\sup_{(\bm r_h,w_h,\widehat w_h^o)\in \bm V_h\times W_h\times M_h^o}\frac{\mathcal B_1(\bm{q}_h,y_h,\widehat y_h^o;\bm r_h,w_h,\widehat w_h^o)}{\norm{(\bm r_h,w_h,\widehat w_h^o)} }\ge C   \norm{(\bm{q}_h,y_h,\widehat y_h^o)} , \label{stab-1} \\
		\sup_{(\bm r_h,w_h,\widehat w_h^o)\in \bm V_h\times W_h\times M_h^o}\frac{\mathcal B_2 (\bm{q}_h,y_h,\widehat y_h^o;\bm r_h,w_h,\widehat w_h^o)}{\norm{(\bm r_h,w_h,\widehat w_h^o)} }\ge C   \norm{(\bm{q}_h,y_h,\widehat y_h^o)}. \label{stab-3} 
		\end{align}
	\end{subequations}
\end{theorem}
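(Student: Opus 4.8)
The plan is to bootstrap the strong stability from the weak stability of \Cref{stab-weak}, following the strategy of \cite{MR2485457,MR3342199}. Since $\norm{(\bm q_h,y_h,\widehat y_h^o)}^2 = \norm{(\bm q_h,y_h,\widehat y_h^o)}_w^2 + \beta_0\|y_h\|_{\mathcal T_h}^2$, the only quantity missing from the weak norm is $\|y_h\|_{\mathcal T_h}$, and this is exactly what the weight $\psi\in W^{k+1,\infty}(\Omega)$ of \eqref{condition-beta} supplies. I will only treat \eqref{stab-1}; \eqref{stab-3} follows either by repeating the argument for $\mathcal B_2$ (whose drift is $-\bm\beta$, whose effective reaction is again $\bar\sigma$, and for which one uses $-\psi$), or, more quickly, from \eqref{stab-1} together with \Cref{sys} (since $\mathcal B_2(a;b)=\mathcal B_1(b;a)$ and the two one-sided inf-sup constants of a square bilinear form coincide). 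Throughout, generic constants may depend on $\Omega,\bm\beta,\sigma,k$ and $\|\psi\|_{W^{k+1,\infty}(\Omega)}$ but not on $\varepsilon$, $h$, or the mesh; assumption \textbf{(A3)} is used to bound $\tau_1,\tau_2,\tau$ uniformly (so $\|\tau_1\|_{0,\infty,\partial T}\le C$) and to replace the global inverse inequalities of earlier Dirichlet control works by the local estimates of \Cref{approximation}.

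The core step is to prove, with $w_h^\psi:=\Pi_k^o(\psi y_h)\in W_h$ and $\widehat w_h^\psi:=\widetilde{\Pi}^{\partial}_k(\psi\widehat y_h^o)\in M_h^o$ (admissible because $\widehat y_h^o|_\Gamma=0$ forces $\psi\widehat y_h^o|_\Gamma=0$), the bound
\[
\mathcal B_1(\bm q_h,y_h,\widehat y_h^o;0,w_h^\psi,\widehat w_h^\psi)\ \ge\ \beta_0\|y_h\|_{\mathcal T_h}^2 - C_1\norm{(\bm q_h,y_h,\widehat y_h^o)}_w^2 - C_2\,h\,\|y_h\|_{\mathcal T_h}^2 .
\]
One inserts $(0,w_h^\psi,\widehat w_h^\psi)$ into \eqref{def_B1}, writes $w_h^\psi=\psi y_h+e$ with $e:=\Pi_k^o(\psi y_h)-\psi y_h$ (so $(e,p)_T=0$ for all $p\in\mathcal P_k(T)$), and uses \eqref{beta1}--\eqref{beta2}; the convection terms then yield the leading term $\tfrac12((\bm\beta\cdot\nabla\psi)y_h,y_h)_{\mathcal T_h}\ge\beta_0\|y_h\|_{\mathcal T_h}^2$. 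The main obstacle is that every remaining term --- the $e$-contributions, the flux couplings $-(w_h^\psi,\nabla\cdot\bm q_h)_{\mathcal T_h}+\langle\widehat w_h^\psi,\bm q_h\cdot\bm n\rangle_{\partial\mathcal T_h}$, the stabilization term $-\langle\tau_1(y_h-\widehat y_h^o),w_h^\psi-\widehat w_h^\psi\rangle_{\partial\mathcal T_h}$, and the reaction term $-(\sigma y_h,w_h^\psi)_{\mathcal T_h}$ --- must be bounded in absolute value by $C\norm{(\bm q_h,y_h,\widehat y_h^o)}_w^2$ or by $Ch\|y_h\|_{\mathcal T_h}^2$; this is delicate because $e=O(h_T)$ while $\nabla e=O(1)$. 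The way through is to integrate by parts so that no derivative falls on $e$: thus $(y_h,\bm\beta\cdot\nabla e)_{\mathcal T_h}$ is replaced by $\sum_T\langle\bm\beta\cdot\bm n\,y_h,e\rangle_{\partial T}-(\nabla\cdot\bm\beta\,y_h,e)_{\mathcal T_h}-(\bm\beta\cdot\nabla y_h,e)_{\mathcal T_h}$, whose boundary part merges with the other convection boundary terms into $\sum_T\langle\bm\beta\cdot\bm n(y_h-\widehat y_h^o),e\rangle_{\partial T}$, after which one applies the super-approximation bounds \eqref{compo_0T}--\eqref{compo_0EE}, the trace estimate \eqref{stab_7}, a $\tau_1$-weighted Cauchy--Schwarz inequality (using $|\bm\beta\cdot\bm n|\le 2\tau_1$), and the orthogonality $((\Pi_0^o\bm\beta)\cdot\nabla y_h,e)_T=0$ --- which rewrites $(\bm\beta\cdot\nabla y_h,e)_T$ as $((\bm\beta-\Pi_0^o\bm\beta)\cdot\nabla y_h,e)_T$ and thereby gains a factor $h_T$. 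Assumption \textbf{(A3)} and the local inverse inequality of \Cref{approximation} enter at several of these estimates.

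Granting this bound, the conclusion is short. From the proof of \Cref{stab-weak}, the triple $(\bm s_h,v_h,\widehat v_h^o):=(\tfrac12+C_0)(\bm q_h,-y_h,-\widehat y_h^o)+(\varepsilon\nabla y_h,0,0)$ satisfies $\mathcal B_1(\bm q_h,y_h,\widehat y_h^o;\bm s_h,v_h,\widehat v_h^o)\ge\tfrac12\norm{(\bm q_h,y_h,\widehat y_h^o)}_w^2$ and $\norm{(\bm s_h,v_h,\widehat v_h^o)}\le C\norm{(\bm q_h,y_h,\widehat y_h^o)}$ by \eqref{addition}. Taking $(\bm r_h,w_h,\widehat w_h^o):=\lambda(\bm s_h,v_h,\widehat v_h^o)+(0,w_h^\psi,\widehat w_h^\psi)$ with $\lambda:=2(C_1+1)$ and adding the two lower bounds gives $\mathcal B_1(\bm q_h,y_h,\widehat y_h^o;\bm r_h,w_h,\widehat w_h^o)\ge\norm{(\bm q_h,y_h,\widehat y_h^o)}_w^2+(\beta_0-C_2h)\|y_h\|_{\mathcal T_h}^2\ge\tfrac12\norm{(\bm q_h,y_h,\widehat y_h^o)}^2$ as soon as $h\le h_0:=\beta_0/(2C_2)$, a threshold independent of $\varepsilon$. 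Finally \Cref{Super-approximation}, \Cref{approximation} and \eqref{addition} give $\norm{(\bm r_h,w_h,\widehat w_h^o)}\le C\norm{(\bm q_h,y_h,\widehat y_h^o)}$; dividing and taking the supremum over $\bm V_h\times W_h\times M_h^o$ yields \eqref{stab-1}.
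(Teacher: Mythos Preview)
Your approach is correct and is essentially the weighted--test--function argument of the paper. The paper organizes it as two steps: first it tests with the \emph{unprojected} triple $(\bm 0,-e^{-\psi}y_h,-e^{-\psi}\widehat y_h^o)$ to extract the coercive contribution $\|e^{-\psi/2}(\beta_0+\bar\sigma)^{1/2}y_h\|^2$, and then it separately bounds $\mathcal B_1$ at the projection remainder $(\bm 0,R_k^o(e^{-\psi}y_h),\widetilde R_k^\partial(e^{-\psi}\widehat y_h^o))$ by $Ch\|(\beta_0+\bar\sigma)^{1/2}y_h\|^2+C\norm{\cdot}_w^2$, so that the \emph{sum} of the two test triples lies in $\bm V_h\times W_h\times M_h^o$. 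Your single--step version with $w_h^\psi=\psi y_h+e$ is the same computation reorganized, and using $\psi$ in place of $e^{-\psi}$ is a harmless cosmetic variant (the boundary/stabilization contribution becomes $-\langle\psi\tau,(y_h-\widehat y_h^o)^2\rangle$, whose sign may flip but which is dominated by $\|\psi\|_{0,\infty}\|\tau^{1/2}(y_h-\widehat y_h^o)\|^2$ either way). One correction: you invoke \textbf{(A3)} to bound $\tau$ uniformly, but the theorem assumes only \textbf{(A1)}--\textbf{(A2)} and the paper's proof does not use \textbf{(A3)} here. Where a uniform bound on $\tau$ seems needed --- e.g.\ in $\|\tau^{1/2}e\|_{\partial T}^2\le C\tau_T h_T\|y_h\|_T^2=C(h_T\|\bm\beta\cdot\bm n\|_{0,\infty,\partial T}+\varepsilon)\|y_h\|_T^2$ --- the extra $\varepsilon\|y_h\|_{\mathcal T_h}^2$ term is absorbed into the weak norm via the HDG Poincar\'e inequality \eqref{HDG-poin}, so \textbf{(A3)} is unnecessary for this result.
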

\begin{proof} We only prove \eqref{stab-1}, and we split the proofs into two steps.
	
	{\bfseries{Step 1:}} Let $\psi\in W^{1,\infty}(\Omega)$ satisfy \eqref{condition-beta}. We take 
	\begin{align}\label{step1}
	(\bm r_h,w_h,\widehat w_h^o)=(\bm r_1,w_1,\widehat w_1) = ( \bm{0},-e^{-\psi}y_h,-e^{-\psi}\widehat y_h^o)
	\end{align}
	in the  definition of $\mathcal{B}_1$ in \eqref{def_B1} to obtain
	\begin{align}
	\hspace{1em}&\hspace{-1em}\mathcal B_1(\bm{q}_h,y_h,\widehat y_h^o;\bm r_1,w_1,\widehat w_1)\nonumber\\
	&=
	[	(e^{-\psi}y_h,\nabla \cdot\bm{q}_h)_{\mathcal{T}_h}-\langle e^{-\psi}\widehat y_h^o,\bm{q}_h\cdot\bm{n} \rangle_{\partial\mathcal{T}_h}] +\langle \tau_1(  y_h-\widehat{y}_h^o), e^{-\psi}y_h-e^{-\psi}\widehat y_h^o \rangle_{\partial\mathcal{T}_h}\nonumber\\
	&\quad+[-( y_h,\bm\beta\cdot \nabla  (e^{-\psi}y_h))_{\mathcal{T}_h}+\langle \widehat{y}^o_h,\bm \beta\cdot\bm n e^{-\psi} y_h\rangle_{\partial\mathcal{T}_h}]+(\sigma y_h,e^{-\psi}y_h)_{\mathcal{T}_h}\nonumber\\
	&=S_1+S_2+S_3+S_4.\nonumber
	\end{align}
	Next, we estimate $\{S_i\}_{i=1}^4$ term by term. First,
	\begin{align*}
	S_1&=-(\nabla  (e^{-\psi}y_h),\bm{q}_h)_{\mathcal{T}_h}-\langle e^{-\psi}(\widehat y_h^o-y_h),\bm{q}_h\cdot\bm{n} \rangle_{\partial\mathcal{T}_h} \nonumber\\
	&=
	-(y_h\nabla e^{-\psi}+e^{-\psi}\nabla y_h,\bm{q}_h)_{\mathcal{T}_h}-\langle e^{-\psi}(\widehat y_h^o-y_h),\bm{q}_h\cdot\bm{n} \rangle_{\partial\mathcal{T}_h} \nonumber\\
	&\le C (\varepsilon^{1/2}\|y_h\|_{\mathcal{T}_h}+\varepsilon^{1/2}\|\nabla  y_h\|_{\mathcal{T}_h}+\|\tau^{1/2}(\widehat y_h^o-y_h ) \|_{\partial\mathcal{T}_h})\varepsilon^{-1/2}\|\bm{q}_h\|_{\mathcal{T}_h} \nonumber\\
	&\le C (\varepsilon^{1/2}\|\nabla  y_h\|_{\mathcal{T}_h}+\|\tau^{1/2}(\widehat y_h^o-y_h ) \|_{\partial\mathcal{T}_h})\varepsilon^{-1/2}\|\bm{q}_h\|_{\mathcal{T}_h} \nonumber          & \textup{by} \  \eqref{HDG-poin}, \\
	&\le C\norm{(y_h,\widehat y_h^o)}_{W,w}\varepsilon^{-1/2}\|\bm{q}_h\|_{\mathcal{T}_h} \nonumber     & \textup{by} \  \eqref{def_Ww},\\
	&\le C\norm{(\bm{q}_h,y_h,\widehat y_h^o)}_{w}^2  & \textup{by} \  \eqref{def_trip_w}.
	\end{align*}
	Second, to estimate the term $S_3$, let  $\varphi = e^{-\psi}$ in \eqref{beta1} to obtain
	\begin{align*}
	-( y_h,\bm\beta\cdot \nabla  (e^{-\psi}y_h))_{\mathcal{T}_h} &= \frac{1}{2} ( \nabla\cdot\bm\beta e^{-\psi}y_h,y_h)_{\mathcal{T}_h} +\frac{1}{2}( y_h,e^{-\psi}y_h \bm\beta \cdot \nabla\psi )_{\mathcal{T}_h}\\
	&\quad -\frac 1 2 \langle y_h,\bm \beta\cdot\bm ne^{-\psi}y_h\rangle_{\partial\mathcal{T}_h}.
	\end{align*}
	Hence,     
	\begin{align*}
	S_3& = -( y_h,\bm\beta\cdot \nabla  (e^{-\psi}y_h))_{\mathcal{T}_h}+\langle \widehat{y}^o_h,\bm \beta\cdot\bm n e^{-\psi} y_h\rangle_{\partial\mathcal{T}_h} \\
	&= \frac{1}{2} ( \nabla\cdot\bm\beta e^{-\psi}y_h,y_h)_{\mathcal{T}_h} +\frac{1}{2}( y_h,e^{-\psi}y_h \bm\beta \cdot \nabla\psi )_{\mathcal{T}_h}\\
	&\quad -\frac 1 2 \langle y_h,\bm \beta\cdot\bm ne^{-\psi}y_h\rangle_{\partial\mathcal{T}_h} +\langle \widehat{y}^o_h,\bm \beta\cdot\bm n e^{-\psi} y_h\rangle_{\partial\mathcal{T}_h}\nonumber\\
	&= \frac{1}{2} (\nabla\cdot\bm\beta e^{-\psi} y_h,y_h)_{\mathcal{T}_h}+  \frac{1}{2} ( e^{-\psi}\bm\beta\cdot\nabla\psi y_h,y_h)_{\mathcal{T}_h}\\
	&\quad- \frac{1}{2} \langle\bm\beta\cdot\bm ne^{-\psi}  (y_h- \widehat y^o_h),y_h- \widehat y^o_h\rangle_{\partial\mathcal{T}_h}\nonumber    & \textup{by} \  \eqref{beta2}, \\
	&\ge \frac{1}{2}(\nabla\cdot\bm\beta e^{-\psi} y_h,y_h)_{\mathcal{T}_h}+( \beta_0e^{-\psi}y_h,y_h   )_{\mathcal{T}_h}      & \textup{by} \  \eqref{condition-beta},\\
	&\quad -\frac{1}{2}\langle\bm\beta\cdot\bm ne^{-\psi}  (y_h- \widehat y^o_h),y_h- \widehat y^o_h\rangle_{\partial\mathcal{T}_h}.
	\end{align*}
	Therefore, by the definition of $\tau $ in \eqref{def_tau}, there exist a positive constant $C_0$ such that  
	\begin{eqnarray}
	S_2+S_3+S_4&\ge& \|e^{-\psi/2}(\beta_0+\bar{\sigma})^{1/2}y_h\|^2_{\mathcal{T}_h}
	+ \|e^{-\psi/2}\tau^{1/2}(y_h-\widehat{y}_h^o)\|^2_{\partial\mathcal{T}_h} \nonumber\\
	&\ge& C_0\|(\beta_0+\bar{\sigma})^{1/2}y_h\|^2_{\mathcal{T}_h}.\nonumber
	\end{eqnarray}
	This implies that there exist positive constants $C_1$ and $C_2$ such that
	\begin{align}\label{BS1}
	\mathcal B_1(\bm{q}_h,y_h,\widehat y_h^o;\bm r_1,w_1,\widehat w_1)\ge  C_1\|(\beta_0+\bar{\sigma})^{1/2}y_h\|^2_{\mathcal{T}_h}-
	C_2\norm{(\bm{q}_h,y_h,\widehat y_h^o)}_{w}^2.
	\end{align}
	Moreover, we have 
	\begin{align}
	\norm{(\bm r_1,w_1,\widehat w_1)}^2&= \norm{( \bm{0},-e^{-\psi}y_h,-e^{-\psi}\widehat y_h^o)}\nonumber     & \textup{by} \  \eqref{step1},\\
	&=\varepsilon\|\nabla  (e^{-\psi}y_h)\|^2_{\mathcal{T}_h}+\|\tau^{1/2}e^{-\psi}( y_h-\widehat{y}_h^o)\|^2_{\partial\mathcal{T}_h}\nonumber\\
	&\quad+\|(\beta_0+\bar{\sigma})^{1/2}e^{-\psi}y_{h}\|^2_{\mathcal{T}_h}\nonumber  & \textup{by} \  \eqref{def_trip},\\
	&\le C \norm{(\bm{q}_h,y_h,\widehat y^o_h)}^2& \textup{by} \  \eqref{HDG-poin}, \label{ineqused}
	\end{align}
	where we used  $\nabla  (e^{-\psi}y_h)=y_h\nabla e^{-\psi}+e^{-\psi}\nabla y_h$ in \eqref{ineqused}.
	
	{\bfseries{Step 2:}} Let $R^o_k=\mathbb I-\Pi^o_k$ and $\widetilde{R}^{\partial}_k=\mathbb I-\widetilde{\Pi}^{\partial}_k$, where $\mathbb I$ is the identity operator.  We take 
	\begin{align}\label{secstep}
	(\bm r_h,w_h,\widehat w_h^o)=(\bm r_2,w_2,\widehat w_2) = ( \bm{0},R^{o}_k(e^{-\psi}y_h),\widetilde{R}^{\partial}_k(e^{-\psi}\widehat y_h^o))
	\end{align}	
	%where
	%$$R^o_k=\mathbb I-\Pi^o_k ,\qquad \widetilde{R}^{\partial}_k=\mathbb I-\widetilde{\Pi}^{\partial}_k,
	%\qquad   {R}^{\partial}_k=\mathbb I-{\Pi}^{\partial}_k\quad (\text{will be used later}).$$
	in the definition of $\mathcal{B}_1$ and use the orthogonality properties of $\Pi_k^o$ and  $\widetilde{\Pi}_k^{\partial}$, integration by parts,  and $\tau_1=\tau_2+\bm\beta\cdot\bm n$ to  get
	\begin{align}
	\hspace{1em}&\hspace{-1em}\mathcal B_1(\bm{q}_h,y_h,\widehat y_h^o;\bm r_2,w_2,\widehat w_2)\nonumber\\
	&=
	-(R^{o}_k(e^{-\psi}y_h),\nabla \cdot\bm{q}_h)_{\mathcal{T}_h}+\langle \widetilde{R}^{\partial}_k(e^{-\psi}\widehat y_h^o),\bm{q}_h\cdot\bm{n} \rangle_{\partial\mathcal{T}_h} \nonumber\\
	&\quad-\langle \tau_1(  y_h-\widehat{y}_h^o), R^{o}_k(e^{-\psi}y_h)-\widetilde{R}^{\partial}_k(e^{-\psi}\widehat y_h^o) \rangle_{\partial\mathcal{T}_h}\nonumber\\
	&\quad+( y_h,\bm\beta\cdot \nabla  R^{o}_k(e^{-\psi}y_h))_{\mathcal{T}_h}-\langle \widehat{y}^o_h,\bm \beta\cdot\bm n R^{o}_k(e^{-\psi} y_h)\rangle_{\partial\mathcal{T}_h}\label{needintr}\\
	&\quad-(\sigma y_h,R^{o}_k(e^{-\psi}y_h))_{\mathcal{T}_h}\nonumber.
	\end{align}
	The definitions of  $\Pi_k^o$ and  $\widetilde{\Pi}_k^{\partial}$ in \eqref{orth} imply
	\begin{align*}
	(R^{o}_k(e^{-\psi}y_h),\nabla \cdot\bm{q}_h)_{\mathcal{T}_h} = 0, \quad \textup{and}\quad \langle \widetilde{R}^{\partial}_k(e^{-\psi}\widehat y_h^o),\bm{q}_h\cdot\bm{n} \rangle_{\partial\mathcal{T}_h} =0.
	\end{align*}
	Next, integration by parts  gives
	\begin{align*}
	\hspace{1em}&\hspace{-1em}( y_h,\bm\beta\cdot \nabla  R^{o}_k(e^{-\psi}y_h))_{\mathcal{T}_h}\\
	&= \langle  \bm{\beta} \cdot \bm n y_h, R^{o}_k(e^{-\psi}y_h)\rangle_{\partial\mathcal T_h} - (\bm \beta \nabla y_h,R^{o}_k(e^{-\psi}y_h))_{\mathcal T_h}  -  (\nabla \cdot\bm \beta y_h, R^{o}_k(e^{-\psi}y_h))_{\mathcal T_h}\\
	&= \langle  \bm{\beta} \cdot \bm n (y_h-\widehat y_h^o), R^{o}_k(e^{-\psi}y_h)\rangle_{\partial\mathcal T_h}  +  \langle  \bm{\beta} \cdot \bm n \widehat y_h^o, R^{o}_k(e^{-\psi}y_h)\rangle_{\partial\mathcal T_h} \\
	&\quad - (\bm \beta \cdot\nabla y_h,R^{o}_k(e^{-\psi}y_h))_{\mathcal T_h}   -  (\nabla \cdot\bm \beta y_h, R^{o}_k(e^{-\psi}y_h))_{\mathcal T_h}\\
	&= \langle  \bm{\beta} \cdot \bm n (y_h-\widehat y_h^o), R^{o}_k(e^{-\psi}y_h) -  \widetilde{R}^{\partial}_k(e^{-\psi}\widehat y_h^o)  \rangle_{\partial\mathcal T_h}  +  \langle  \bm{\beta} \cdot \bm n (y_h-\widehat y_h^o), \widetilde{R}^{\partial}_k(e^{-\psi}\widehat y_h^o)  \rangle_{\partial\mathcal T_h}  \\
	&\quad +  \langle  \bm{\beta} \cdot \bm n \widehat y_h^o, R^{o}_k(e^{-\psi}y_h)\rangle_{\partial\mathcal T_h} - (\bm \beta\cdot \nabla y_h,R^{o}_k(e^{-\psi}y_h))_{\mathcal T_h}   -  (\nabla \cdot\bm \beta y_h, R^{o}_k(e^{-\psi}y_h))_{\mathcal T_h}.
	\end{align*}
	Using  $\tau_1 = \tau_2 + \bm \beta\cdot \bm n$ along with \eqref{needintr} and the above equalities give
	\begin{align}
	\hspace{1em}&\hspace{-1em}\mathcal B_1(\bm{q}_h,y_h,\widehat y_h^o;\bm r_2,w_2,\widehat w_2)\nonumber\\
	&=-\langle \tau_2(  y_h-\widehat{y}_h^o), R^{o}_k(e^{-\psi}y_h)-\widetilde{R}^{\partial}_k(e^{-\psi}\widehat y_h^o) \rangle_{\partial\mathcal{T}_h}\nonumber\\
	&\quad+\langle y_h-\widehat{y}^o_h,\bm \beta\cdot\bm n \widetilde{R}^{\partial}_k(e^{-\psi}\widehat y_h^o)\rangle_{\partial\mathcal{T}_h} -(\bm\beta\cdot\nabla  y_h, R^{o}_k(e^{-\psi}y_h))_{\mathcal{T}_h} \nonumber\\
	&\quad-((\sigma+\nabla\cdot\bm\beta) y_h,R^{o}_ke^{-\psi}y_h)_{\mathcal{T}_h}\nonumber\\
	&= T_1+T_2+T_3+T_4.\label{T0}
	\end{align}
	Before we estimate $\{T_i\}_{i=1}^4$, we first define ${R}^{\partial}_k=\mathbb I-{\Pi}^{\partial}_k$ and estimate  the following term:
	\begin{align}
	\hspace{1em}&\hspace{-1em}\|(R_{k}^o(e^{-\psi}y_h)-R_{k}^{\partial}(e^{-\psi}\widehat{y}^o_h))\|^2_{\partial \mathcal{T}_h}\nonumber\\
	&\le C \sum_{T\in\mathcal{T}_h}
	\left(
	\|R_{k}^o(e^{-\psi}y_h)\|^2_{\partial T}+\|R_{k}^{\partial}(e^{-\psi}\widehat{y}^o_h)\|^2_{\partial T}
	\right)\nonumber     \\
	&\le C \sum_{T\in\mathcal{T}_h}
	\left(
	h_T\|y_h\|^2_{T}
	+h_T^2\|y_h-\widehat{y}^o_h\|^2_{\partial T}
	\right)\nonumber   & \textup{by} \  \eqref{compo_0TE}-\eqref{compo_0EE},\\
	& \le Ch^2 \| y_h-\widehat{y}_h^o\|^2_{\partial\mathcal{T}_h}+Ch\|y_h\|^2_{\mathcal{T}_h} \label{tau-R}.
	\end{align}
	Therefore,
	\begin{align}\label{T1}
	|T_1|\le C h^2 \|\tau^{1/2}( y_h-\widehat{y}_h^o)\|^2_{\partial\mathcal{T}_h}+Ch\|(\beta_0+\bar{\sigma})^{1/2}y_h\|^2_{\mathcal{T}_h}.
	\end{align}
	Next, by the definition of  $R_k^o$,  we have
	\begin{align*}
	T_2+T_3+T_4&=(\bm\beta\cdot\nabla y_h,R_{k}^o(e^{-\psi}y_h))_{\mathcal{T}_h}+\langle y_h-\widehat{y}^o_h,\bm \beta\cdot\bm n \widetilde{R}^{\partial}_k(e^{-\psi}\widehat y_h^o)\rangle_{\partial\mathcal{T}_h}\\
	&\quad -((\sigma+\nabla\cdot\bm\beta )y_h,R_{k}^o(e^{-\psi}y_h))_{\mathcal{T}_h} \\
	&=(R_{0}^o(\bm\beta) \cdot\nabla y_h,R_{k}^o(e^{-\psi}y_h))_{\mathcal{T}_h}+ \langle y_h-\widehat{y}^o_h,\bm \beta\cdot\bm n \widetilde{R}^{\partial}_k(e^{-\psi}\widehat y_h^o)\rangle_{\partial\mathcal{T}_h} \\
	&\quad +((\sigma+\nabla\cdot\bm\beta )y_h,R_{k}^o(e^{-\psi}y_h)).
	\end{align*}
	Hence,
	\begin{align}
	T_2+T_3+T_4&\le C\left(\sum_{T\in\mathcal{T}_h}h_T^2\|\nabla y_h\|^2_{T}\right)^{1/2}h\|y_h\|_{\mathcal{T}_h} & \textup{by} \  \eqref{compo_0T}, \nonumber\\
	&\quad+C\left(\sum_{T\in\mathcal{T}_h}
	\|y_h-\widehat{y}^o_h\|^2_{\partial T}\right)^{1/2} \left(\sum_{T\in\mathcal{T}_h}h_T^2 \|y_h\|^2_{\partial T}\right)^{1/2} & \textup{by} \  \eqref{compo_0EE}, \nonumber\\
	&\quad+Ch(\|\bm\beta \|_{1,\infty}+\|\bar{\sigma}\|_{0,\infty})\|y_h\|^2_{\mathcal{T}_h}  & \textup{by} \  \eqref{compo_0T},\nonumber\\
	&\le Ch\|(\beta_0+\bar{\sigma})^{1/2}y_h\|^2_{\mathcal{T}_h}+\|\tau^{1/2}( y_h-\widehat{y}_h^o)\|^2_{\partial\mathcal{T}_h}.\label{T234}
	\end{align}
	
	From \eqref{T0}, $(\ref{T1})$ and $(\ref{T234})$, we  get
	\begin{align}\label{BS2}
	\mathcal B_1(\bm{q}_h,y_h,\widehat y_h^o;\bm r_2,w_2,\widehat w_2^o)\ge-C_3h\|(\beta_0+\bar{\sigma})^{1/2}y_h\|^2_{\mathcal{T}_h}
	-C_4\norm{(\bm{q}_h,y_h,\widehat y_h^o)}_{w}^2.
	\end{align}
	Using  \eqref{tau-R}, we have
	\begin{align}
	\norm{(\bm r_2,w_2,\widehat w_2^o)}^2&=\norm{( \bm{0},R^{o}_k(e^{-\psi}y_h),\widetilde{R}^{\partial}_k(e^{-\psi}\widehat y_h^o))}\nonumber\\
	& = \varepsilon\|\nabla  R^{o}_k(e^{-\psi}y_h)\|^2_{\mathcal{T}_h}+\|(\beta_0+\bar{\sigma})^{1/2}e^{-\psi}y_{h}\|^2_{\mathcal{T}_h} \nonumber\\
	&\quad+\|\tau^{1/2}(R^{o}_k(e^{-\psi} y_h)-\widetilde{R}^{\partial}_k(e^{-\psi}\widehat{y}_h^o))\|^2_{\partial\mathcal{T}_h}\nonumber\\
	&\le C \norm{(\bm{q}_h,y_h,\widehat y_h^o)}^2 & \textup{by} \  \eqref{HDG-poin}.
	\end{align}
	By  \eqref{addition}, there exists a $(\bm r_0,w_0,\widehat w^o_0)$ $\in \bm V_h\times W_h\times M_h^o$ such that
	\begin{subequations}\label{BS3}
		\begin{gather}
		\mathcal B_1(\bm q_h,y_h,\widehat{y}_h^o;\bm r_0,w_0,\widehat w^o_0)\ge \norm{(\bm q_h,y_h,\widehat{y}_h^o)}^2_w,\\
		\norm{(\bm r_0,w_0,\widehat w^o_0)}\le C
		\norm{(\bm q_h,y_h,\widehat{y}_h^o)}.
		\end{gather}
	\end{subequations} 
	Take $h$ small enough so that we have  $C_3h\le C_1/2$.  Set  $C_*=C_1+C_2+C_4$ and 
	\begin{align}
	(\bm r_h,w_h,\widehat w^o_h) = C_*(\bm r_0,w_0,\widehat w^o_0)+(\bm r_1,w_1,\widehat w^o_1)+(\bm r_2,w_2,\widehat w^o_2).
	\end{align}
	By \eqref{BS1}, \eqref{BS2} and \eqref{BS3}, we  get
	\begin{align*}
	\mathcal B_1(\bm q_h,y_h,\widehat y^o_h;\bm{r}_h,w_h,\widehat w^o_h)&\ge C
	\norm{(\bm q_h,y_h,\widehat y^o_h)}^2
	\ge C\norm{(\bm q_h,y_h,\widehat y^o_h)}\cdot
	\norm{(\bm r_h,w_h,\widehat w_h^o)},
	\end{align*}
	which implies \eqref{stab-1}. 
\end{proof}

\section{Error analysis}
\label{erroranalysis}
Next, we perform a convergence analysis for the convection dominated Dirichlet boundary control problem.

\subsection{Assumptions and main result}

Throughout, we assume $\Omega$ is a bounded convex polyhedral domain.  Therefore, in the 2D  case  the largest interior angle $\omega$ satisfies $\pi/3\le \omega<\pi$. Moreover, we assume the velocity vector field $ \bm \beta $ and  $\sigma$ satisfy
\begin{equation}\label{eqn:beta_assumptions2}
\bm \beta \in [C(\overline{\Omega})]^d,  \nabla\cdot\bm{\beta}\in L^\infty(\Omega),      \sigma+\frac 1 2\nabla \cdot \bm \beta \geq 0,  \nabla \nabla \cdot \bm{\beta}\in[L^2(\Omega)]^d,  \sigma\in L^{\infty}(\Omega)\cap H^{1}(\Omega).
\end{equation}
We assume the solution  has the following regularity properties:
\begin{subequations}\label{eqn:regularity2}
	\begin{gather}
	y \in H^{r_y}(\Omega),  \quad  z \in H^{r_z}(\Omega),  \quad  \bm q \in [ H^{r_{\bm q}}(\Omega) ]^d ,  \quad  \bm p \in [H^{r_{\bm p}}(\Omega)]^d,\\
	r_y \ge 1,  \quad  r_z \ge 2,  \quad  r_{\bm q} \ge  0,  \quad  r_{\bm p} \ge 1.\label{eqn:s_rates_ineq}
	\end{gather}
\end{subequations}
In the 2D case, \Cref{MT210} guarantees this condition is satisfied.

It is worthwhile to mention that if $ \bm q $ has a well-defined boundary trace in $ L^2(\Gamma) $, i.e., $ r_{\bm q} > 1/2 $, then we refer to this as the high regularity case for the boundary control problem; otherwise, if $ r_{\bm q} \in [0,1/2] $, then we say this is the low regularity case.  In 2D, by  \Cref{MT210}, if $ y_d \in H^{t^*}(\Omega) $ for some $ t^* \in (1/2,1) $, and $ \pi/3 \le \omega < 2\pi/3 $, then we are guaranteed to be in the high regularity case.  However, if one of the above assumptions concerning $ y_d $ or $ \omega $ is not satisfied, then $ \bm q $ is no longer guaranteed to have a well-defined boundary trace.

For the \emph{diffusion dominated}  boundary control  problem, we gave a rigorous error analysis of a different HDG method for  the high regularity case in  \cite{HuShenSinglerZhangZheng_HDG_Dirichlet_control1,HuMateosSinglerZhangZhang1} and for the  low regularity case in \cite{HuMateosSinglerZhangZhang2}. In this work, we are interested  in the \emph{convection dominated} case. However, existing numerical analysis works for convection dominated diffusion PDEs only consider the high regularity case; see, e.g. \cite{MR2485457,MR3342199}. To the best of our knowledge, there is no existing error analysis work on convection dominated PDEs with low regularity solutions.

We now state our main convergence result.
\begin{theorem}\label{main-conver} 
	Let  $s_{y} = \min\{r_{y}, k+1\}$,  $ s_{z} = \min\{r_{z}, k+1\}$,  $(u,y,z)$ and $(u_h,y_h,\\ z_h)$ be the solutions of \eqref{eq_adeq} and \eqref{HDG_discrete2}, respectively. If assumptions  \textbf{(A1)}-\textbf{(A3)} hold,  then there exists $h_0$, independent of $\varepsilon$,  such that for all $h\le h_0$ we have 
	\begin{align*}
	\|u-u_h\|_{\mathcal E_h^\partial}&\le C
	\left(h^{s_{y}-1/2}\|y\|_{s_y}+h^{s_{z}-1/2}\|z\|_{s_z}+\delta(s_y)\varepsilon^{1/2}h\|\Delta y\|_{\mathcal{T}_h}\right),\\
	\|y - y_h \|_{\mathcal{T}_h}&\le C\left(h^{s_{y}-1/2}\|y\|_{s_y}+ h^{s_{z}-1/2}\|z\|_{s_z}+\delta(s_y)\varepsilon^{1/2}h\|\Delta y\|_{\mathcal{T}_h}\right),\\
	\|z-z_h \|_{\mathcal{T}_h}&\le C\left(h^{s_{y}-1/2}\|y\|_{s_y}+ h^{s_{z}-1/2}\|z\|_{s_z}+\delta(s_y)\varepsilon^{1/2}h\|\Delta y\|_{\mathcal{T}_h}\right),
	\end{align*}
	where $\delta (t) =1$ if $t\le 3/2$,  otherwise $\delta (t) =0$. 
\end{theorem}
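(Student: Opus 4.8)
The plan is to route every error through \emph{auxiliary} HDG solutions driven by the exact control, to control the resulting discretization pieces with the strong stability estimate of \Cref{LBB}, and to control the control-induced pieces with a single $\varepsilon$-robust bound for $\|u-u_h\|_{\mathcal E_h^\partial}$ obtained from the self-adjointness relation of \Cref{sys}. First I would introduce the auxiliary state $(\bm q_h(u),y_h(u),\widehat y_h^o(u))$, solving \eqref{HDG_full_discrete_a} with the exact control $u$ in place of $u_h$, and the auxiliary dual state $(\bm p_h(u),z_h(u),\widehat z_h^o(u))$, solving \eqref{HDG_full_discrete_b} with source $y_h(u)-y_d$. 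Splitting $y-y_h=(y-y_h(u))+(y_h(u)-y_h)$, and likewise for $z$ and for the fluxes, the control-induced differences $(\bm q_h(u)-\bm q_h,y_h(u)-y_h,\widehat y_h^o(u)-\widehat y_h^o)$ and $(\bm p_h(u)-\bm p_h,z_h(u)-z_h,\widehat z_h^o(u)-\widehat z_h^o)$ satisfy the homogeneous versions of \eqref{HDG_full_discrete_a} and \eqref{HDG_full_discrete_b} with right-hand sides $-\langle u-u_h,\tau_2 w_1+\bm r_1\cdot\bm n\rangle_{\mathcal E_h^\partial}$ and $-(y_h(u)-y_h,w_2)_{\mathcal T_h}$, respectively.

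Next I would bound the discretization pieces. Inserting $\Pi_k^o$ and $\widetilde\Pi_k^\partial$, forming the error equations for $\Pi_k^o y-y_h(u)$ and $\Pi_k^o z-z_h(u)$, and applying \Cref{LBB} to the projected errors gives, after the approximation bounds of \Cref{approximation} and \Cref{Super-approximation} and a triangle inequality,
\[
\|y-y_h(u)\|_{\mathcal T_h}\le C\bigl(h^{s_y-1/2}\|y\|_{s_y}+\delta(s_y)\varepsilon^{1/2}h\|\Delta y\|_{\mathcal T_h}\bigr),
\]
and the analogous estimate for $z-z_h(u)$ with an extra $h^{s_z-1/2}\|z\|_{s_z}$ and the propagated contribution $\|y-y_h(u)\|_{\mathcal T_h}$ coming through the dual source. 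The constants are independent of $\varepsilon$ precisely because the inf-sup argument behind \Cref{LBB} already uses the weighted test functions $e^{-\psi}$ from \eqref{condition-beta}, and assumption \textbf{(A3)} lets a local inverse inequality replace the global one used in previous Dirichlet control works, so that only shape regularity of $\{\mathcal T_h\}$ is needed. The $\|\Delta y\|_{\mathcal T_h}$ term is present only in the low-regularity case $r_{\bm q}\le 1/2$: there $\bm q$ has no $H^{1/2+}$ boundary trace, so terms of the form $(\nabla\cdot(\bm q-\Pi_k^o\bm q),\cdot)_{\mathcal T_h}$ must be estimated through $\nabla\cdot\bm q=-\varepsilon\Delta y\in L^2(\Omega)$ (valid by \Cref{MT210}), and boundary-flux pairings of the test functions must be rerouted through the $H_0^1$-conforming reconstruction $\mathcal I_h^c$ using \eqref{I*1}--\eqref{I*4}.

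For the control I would proceed as follows. Write $\delta y:=y_h(u)-y_h$, $\delta\bm p:=\bm p_h(u)-\bm p_h$, $\delta z:=z_h(u)-z_h$, $\delta\widehat z:=\widehat z_h^o(u)-\widehat z_h^o$, and note $\delta\widehat z|_\Gamma=0$. Testing the homogeneous $\mathcal B_1$-equation for the control-induced state difference with $(\delta\bm p,\delta z,\delta\widehat z)$, testing the homogeneous $\mathcal B_2$-equation for the control-induced dual difference with $(\bm q_h(u)-\bm q_h,\delta y,\widehat y_h^o(u)-\widehat y_h^o)$, and invoking the symmetry $\mathcal B_1(\cdot\,;\cdot)=\mathcal B_2(\cdot\,;\cdot)$ of \Cref{sys}, the convection contributions cancel exactly and one obtains
\[
\bigl\langle(\bm p_h(u)-\bm p_h)\cdot\bm n+\tau_2(\delta z-\delta\widehat z),\,u-u_h\bigr\rangle_{\mathcal E_h^\partial}=\|\delta y\|_{\mathcal T_h}^2\ge 0.
\]
Combining this with the continuous optimality condition \eqref{mixed_e1}, the discrete one \eqref{6e}, and the fact that $\Pi_k^\partial u-u_h\in M_h^\partial$, and splitting $(\bm p_h\cdot\bm n+\tau_2(z_h-\widehat z_h^o))-\bm p\cdot\bm n$ into the control-induced piece (handled by the identity above, with the favorable sign) and the auxiliary-dual consistency piece, yields
\[
\gamma\|u-u_h\|_{\mathcal E_h^\partial}^2+\|\delta y\|_{\mathcal T_h}^2\le\bigl\langle\bigl(\bm p_h(u)\cdot\bm n+\tau_2(z_h(u)-\widehat z_h^o(u))\bigr)-\bm p\cdot\bm n,\,u-u_h\bigr\rangle_{\mathcal E_h^\partial}+(\text{projection terms}).
\]
The right-hand side is controlled by the discretization bounds of the previous step (again rerouting through $\mathcal I_h^c$ when $\bm p\cdot\bm n$ has no trace) and a Young's inequality; absorbing $\tfrac{\gamma}{2}\|u-u_h\|_{\mathcal E_h^\partial}^2$ gives the asserted bound for $\|u-u_h\|_{\mathcal E_h^\partial}$. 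Feeding this back, $\|y_h(u)-y_h\|_{\mathcal T_h}=\|\delta y\|_{\mathcal T_h}$ is already bounded, $\|z_h(u)-z_h\|_{\mathcal T_h}$ follows from \Cref{LBB} applied to the homogeneous dual difference in terms of $\|u-u_h\|_{\mathcal E_h^\partial}$ and $\|\delta y\|_{\mathcal T_h}$, and adding these to the discretization bounds gives the estimates for $\|y-y_h\|_{\mathcal T_h}$ and $\|z-z_h\|_{\mathcal T_h}$.

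The main obstacle is the low-regularity case $r_{\bm q}\le 1/2$: the boundary fluxes of $\bm q$ and $\bm p$ have no $L^2(\Gamma)$ trace, so the classical HDG projection and the naive energy identities are unavailable, and one must systematically replace each boundary-flux pairing by one through the conforming reconstruction $\mathcal I_h^c$ and recover the $\varepsilon^{1/2}h\|\Delta y\|_{\mathcal T_h}$ contribution from $\nabla\cdot\bm q=-\varepsilon\Delta y$ rather than from flux regularity. A pervasive secondary difficulty is keeping every constant independent of $\varepsilon$, which is exactly what forces the weighted-norm stability of \Cref{LBB} and the use of \textbf{(A3)} to localize the inverse inequality.
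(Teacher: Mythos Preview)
Your proposal is correct and follows essentially the same route as the paper: an auxiliary HDG problem driven by the exact control (the paper uses $\Pi_k^\partial u$, which coincides with $u$ in all the relevant boundary pairings), the strong inf--sup of \Cref{LBB} for the discretization pieces, the duality identity from \Cref{sys} to produce the favorable sign $\|\delta y\|^2$, and the $\mathcal I_h^c$ rerouting in the low-regularity case. One small over-caution: you will never need to reroute $\bm p\cdot\bm n$ through $\mathcal I_h^c$, since $r_z\ge 2$ guarantees $\bm p=-\varepsilon\nabla z\in \bm H^1(\Omega)$ and hence $\bm p\cdot\bm n\in L^2(\Gamma)$; the paper bounds that boundary term directly via a local inverse inequality and the estimate for $\|\bm p-\bm p_h(u)\|_{\mathcal T_h}$ from the auxiliary step.
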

\begin{remark}
	If $s_y \le3/2$, then we  have $\|\Delta y\|_{\mathcal T_h}$ in the error estimates. This term is finite by \Cref{MT210}.
\end{remark}

Specializing to the 2D case gives the following result:
\begin{corollary}\label{cor:main_result1}
	Suppose $ d = 2 $, $ f = 0 $, $ y_d \in H^{t^*}(\Omega) $ for some $ t^* \in [0,1) $ and  assumptions  \textbf{(A1)}-\textbf{(A3)} hold.  Let $ \pi/3\le \omega<\pi $ be the largest interior angle of $\Gamma$, and let $ r > 0 $ satisfy
	$$
	r \leq  r_d := \frac{1}{2} + t^* \in [1/2,3/2),  \quad  \mbox{and}  \quad  r < r_{\Omega} := \min\left\{ \frac{3}{2}, \frac{\pi}{\omega} - \frac{1}{2} \right\} \in (1/2, 3/2].
	$$
	If $ k = 1 $,  then there exists $h_0$, independent of $\varepsilon$,  such that for all $h\le h_0$ we have 
	\begin{align*}
	\norm{u-u_h}_{\varepsilon_h^\partial}& \le C h^{r} (\norm{y}_{H^{r+1/2}(\Omega)} + \norm{z}_{H^{r+3/2}(\Omega)} + \delta(r)\varepsilon^{1/2}h\|\Delta y\|_{\mathcal{T}_h}),\\
	\norm{y-y_h}_{\mathcal T_h}& \le C h^{r} (\norm{y}_{H^{r+1/2}(\Omega)} + \norm{z}_{H^{r+3/2}(\Omega)} +\delta(r)\varepsilon^{1/2}h\|\Delta y\|_{\mathcal{T}_h}),\\
	\norm {z - z_h}_{\mathcal T_h}   &  \le C  h^{r} (\norm{y}_{H^{r+1/2}(\Omega)} + \norm{z}_{H^{r+3/2}(\Omega)} + \delta(r)\varepsilon^{1/2}h\|\Delta y\|_{\mathcal{T}_h}).
	\end{align*}
	Furthermore, if $ k = 0 $,  then there exists $h_1$, independent of $\varepsilon$, such that for all $h\le h_1$ we have 
	\begin{align*}
	\norm{u-u_h}_{\varepsilon_h^\partial}&\le C h^{1/2} (\norm{y }_{H^{1}(\Omega)}+\norm{z }_{H^{1}(\Omega)}+\delta(r)\varepsilon^{1/2}h\|\Delta y\|_{\mathcal{T}_h}),\\
	\norm{y-y_h}_{\mathcal T_h}&\le C h^{1/2} (\norm{y }_{H^{1}(\Omega)}+\norm{z }_{H^{1}(\Omega)}+\delta(r)\varepsilon^{1/2}h\|\Delta y\|_{\mathcal{T}_h}),\\
	\norm{z-z_h}_{\mathcal T_h}&\le C h^{1/2} (\norm{y }_{H^{1}(\Omega)}+\norm{z }_{H^{1}(\Omega)}+\delta(r)\varepsilon^{1/2}h\|\Delta y\|_{\mathcal{T}_h}).
	\end{align*}
	
\end{corollary}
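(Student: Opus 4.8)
The plan is to obtain \Cref{cor:main_result1} as a specialization of \Cref{main-conver}, with \Cref{MT210} supplying the regularity exponents in two space dimensions. First I would apply \Cref{MT210}: the hypotheses $f=0$, $y_d\in H^{t^*}(\Omega)$ and the standing assumptions \eqref{eqn:beta_assumptions2} (which imply \eqref{eqn:beta_assumptions1}) place us in its setting, and since $r$ was chosen so that $0<r\le 1/2+t^*$ and $r<\min\{3/2,\pi/\omega-1/2\}$, we may take $s=r$ there to obtain
\begin{align*}
(u,\bm q,\bm p,y,z)\in H^{r}(\Gamma)\times\bm H^{r-1/2}(\Omega)\times\bm H^{r+1/2}(\Omega)\times H^{r+1/2}(\Omega)\times\bigl(H^{r+3/2}(\Omega)\cap H_0^1(\Omega)\bigr).
\end{align*}
Since $\min\{3/2,\pi/\omega-1/2\}>1/2$ and $1/2+t^*\ge1/2$, \Cref{MT210} also yields $y\in H^1(\Omega)$ and $z\in H^2(\Omega)$, so the regularity hypotheses \eqref{eqn:regularity2} of \Cref{main-conver} are satisfied, and in the error bound we may use the exponents $r_y=r+1/2$, $r_z=r+3/2$, $r_{\bm q}=r-1/2$, $r_{\bm p}=r+1/2$.

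Next I would invoke \Cref{main-conver} (applicable because \textbf{(A1)}--\textbf{(A3)} hold) and unwind the rates. For $k=1$, $s_y=\min\{r_y,k+1\}=\min\{r+1/2,2\}=r+1/2$ because $r<3/2$, so the $y$-contribution is exactly $h^{r}\|y\|_{H^{r+1/2}(\Omega)}$; for the $z$-contribution, $s_z=\min\{r+3/2,2\}$, and in either case $h^{s_z-1/2}\|z\|_{s_z}\le h^{r}\|z\|_{H^{r+3/2}(\Omega)}$ because $h\le h_0\le1$, $s_z-1/2\ge r$, and $\|z\|_{s_z}\le\|z\|_{H^{r+3/2}(\Omega)}$; finally $\delta(s_y)=\delta(r+1/2)\le\delta(r)=1$ since $r<3/2$, so the last term is dominated by $\delta(r)\,\varepsilon^{1/2}h\|\Delta y\|_{\mathcal T_h}$. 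Collecting the three estimates of \Cref{main-conver} gives the $k=1$ claim. For $k=0$, $s_y=\min\{r_y,1\}=1$ and $s_z=\min\{r_z,1\}=1$ (using $r_y\ge1$, $r_z\ge2$), so the estimates of \Cref{main-conver} already read as the stated $h^{1/2}$ bounds with $\|y\|_1=\|y\|_{H^1(\Omega)}$, $\|z\|_1=\|z\|_{H^1(\Omega)}$ and $\delta(s_y)=\delta(1)=1=\delta(r)$. The thresholds $h_0$ and $h_1$ are those furnished by \Cref{main-conver} for $k=1$ and $k=0$, respectively, and are $\varepsilon$-independent by that theorem; moreover $\|\Delta y\|_{\mathcal T_h}$ is finite by the final assertion of \Cref{MT210}.

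The proof is essentially bookkeeping, with no analytic obstacle beyond \Cref{MT210} and \Cref{main-conver}; the two points needing care are (i) that $s=r$ is an admissible exponent in \Cref{MT210}, which is exactly what the constraints $r\le r_d$ and $r<r_\Omega$ encode, and (ii) that replacing $s_y$, $s_z$ by $r$ and $\delta(s_y)$ by $\delta(r)$ only enlarges the right-hand side, which uses $r<3/2$ and $h\le1$. In the borderline regime $r<1/2$ one simply uses the always-available regularity $y\in H^1(\Omega)$, $z\in H^2(\Omega)$; since the constant $C$ is allowed to depend on the exact solution, this still delivers the stated estimate. I would write out the $k=1$ case in full and note that $k=0$ is immediate from $s_y=s_z=1$.
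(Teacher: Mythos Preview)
Your proposal is correct and matches the paper's approach: the paper presents \Cref{cor:main_result1} without proof, as an immediate specialization of \Cref{main-conver} via the 2D regularity supplied by \Cref{MT210}, and you carry out exactly this bookkeeping. The one point worth flagging is that your argument (correctly, from \Cref{main-conver}) produces the $\Delta y$ contribution as an additive term $C\,\delta(r)\varepsilon^{1/2}h\|\Delta y\|_{\mathcal T_h}$ rather than one multiplied by the leading $h^r$ as the corollary's parenthesization literally reads; this is a typographical slip in the statement, not a gap in your reasoning.
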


Similar to \cite{HuShenSinglerZhangZheng_HDG_Dirichlet_control1,HuMateosSinglerZhangZhang1}, the convergence rates are optimal for the control when $k=1$ and suboptimal when $k=0$. However,  if $ y_d \in L^2(\Omega) $, then $ u \in H^{1/2}(\Gamma) $ only and the convergence rate for the control is optimal when $k=0$.

\subsection{Proof of \Cref{main-conver}}
We introduce an auxiliary problem with the approximate control $u_h$ in the HDG discretized  optimality system \eqref{HDG_full_discrete_a} replaced by a projection of the exact optimal control, and split the proof into seven steps.

We first bound the error between the solution of the optimality system \eqref{mixed_a1}-\eqref{mixed_d1} and
$(\bm q_h(u),y_h(u),\widehat y^o_h(u),\bm p_h(u),z_h(u),\widehat z^o_h(u))\in [\bm{V}_h\times W_h\times M^o_h ]^2$ satisfying the auxiliary problem
\begin{subequations}\label{Co}
	\begin{align}
	\mathcal B_1(\bm q_h(u),y_h(u),\widehat y_h(u);\bm r_1,w_1,\widehat w_1^o)&=-( f, w_1)_{\mathcal{T}_h}-\langle \Pi^{\partial}_k u, \tau_2 w_1+\bm r_1\cdot\bm{n}\rangle_{\mathcal{E}_h^{\partial}},\label{Co1}\\
	\mathcal B_2(\bm p_h(u),z_h(u),\widehat z_h(u);\bm r_2,w_2,\widehat w_2^o)&=-(y_h(u)-y_d,w_2)_{\mathcal{T}_h}\label{Co2},
	\end{align}
	for all  $\left(\bm r_1,w_1,\widehat{w}^o_1,\bm r_2,w_2,\widehat{w}^o_2\right)\in [\bm V_h\times W_h\times M^{o}_h]^2$.
\end{subequations}

\subsubsection{Step 1: errors between the auxiliary problem \eqref{Co} and the  continuous problem \eqref{mixed}}
\label{part I}

\begin{lemma}\label{main_lemma_step1} 
	Let $(\bm{q},y,\bm{p},z,u)$ be the solution of \eqref{mixed}. Then for all  
	$(\bm r_1,w_1,\widehat{w}^o_1,\bm r_2, \\  w_2,  \widehat{w}^o_2)\in [\bm V_h\times W_h\times M^{o}_h]^2$, we have 
	\begin{subequations}\label{Co-error}
		\begin{align}
		&\mathcal B_1(\bm{\Pi}^o_k\bm{q},\Pi^{o}_ky,\widetilde{\Pi}^{\partial}_ky; \bm r_1,w_1,\widehat w_1^o)\nonumber\\
		&\qquad=-(f,w_1)_{\mathcal{T}_h}-\langle \Pi^{\partial}_ku,\tau_2 w_1+\bm r_1\cdot\bm n\rangle_{\mathcal{E}_h^{\partial}}
		+E_1(\bm{q},y;w_1,\widehat{w}_1^o),\label{Co-error1}\\
		&\mathcal B_2(\bm{\Pi}^o_k\bm p,\Pi^{o}_kz,\widetilde{\Pi}^{\partial}_k z;\bm r_2,w_2,\widehat w_2^o)=-(y-y_d,w_2)_{\mathcal{T}_h}+E_2(\bm{p},z;w_2,\widehat{w}_2^o),\label{Co-error2}
		\end{align}
	\end{subequations}
	where
	\begin{eqnarray}
	E_1(\bm{q},y;w_1,\widehat w_1^o)&=&( \Pi^{o}_ky-y,\bm\beta\cdot\nabla w_1)_{\mathcal{T}_h} -(\sigma (\Pi^{o}_k-\mathbb I)y,w_1)_{\mathcal{T}_h}\nonumber\\
	&&+\langle \widehat w_1^o-w_1,(\bm{\Pi}^o_k\bm{q}-\bm{q})\cdot\bm{n} \rangle_{\partial\mathcal{T}_h}\nonumber\\
	&&+\langle y-\Pi_k^{\partial}y,\bm \beta\cdot\bm n (w_1-\widehat w^o_1)\rangle_{\partial\mathcal{T}_h}\nonumber\\
	&&-\langle \tau_1(  \Pi^{o}_ky-\Pi^{\partial}_ky), w_1-\widehat w^o_1 \rangle_{\partial\mathcal{T}_h},\\
	E_2(\bm{p},z;w_2,\widehat{w}_2^o)&=&-( \Pi^{o}_kz-z,\bm\beta\cdot\nabla w_2)_{\mathcal{T}_h} -(\sigma (\Pi^{o}_k-\mathbb I)(z+\nabla\cdot\bm\beta),w_2)_{\mathcal{T}_h}\nonumber\\
	&&+\langle \widehat{w}_2^o-w_2,(\bm{\Pi}^o_k\bm{p}-\bm{p})\cdot\bm{n} \rangle_{\partial\mathcal{T}_h}\nonumber\\
	&&-\langle z-\Pi_k^{\partial}z,\bm \beta\cdot\bm n ( w_2-\widehat w^o_2)\rangle_{\partial\mathcal{T}_h}\nonumber\\
	&&-\langle \tau_2(  \Pi^{o}_kz-\Pi^{\partial}_kz), w_2-\widehat{w}_2^o \rangle_{\partial\mathcal{T}_h}.
	\end{eqnarray}
\end{lemma}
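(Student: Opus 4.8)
The plan is a direct verification: substitute the $L^2$-projections of the exact quantities into the forms \eqref{def_B1}--\eqref{def_B2}, reduce each resulting term with the tools below, and collect the unmatched pieces --- these are exactly $E_1,E_2$. The reductions rely on four ingredients. (i) The defining orthogonality \eqref{orth} of $\Pi^o_k,\Pi^\partial_k$: whenever a projected quantity is paired against a piecewise polynomial of degree $\le k$ (such as $\bm r_1$, $\nabla\cdot\bm r_1$, $\nabla w_1$, or $\bm r_1\cdot\bm n|_E$) the projection may be dropped. (ii) Element-by-element integration by parts, which is legitimate because \Cref{MT210} gives $\Delta y\in L^2(\Omega)$ and \eqref{eqn:beta_assumptions2} gives $\nabla\cdot(\bm\beta y)\in L^2(\Omega)$, so $\bm q=-\varepsilon\nabla y\in\bm H(\textup{div},T)$ on each $T$ and every face term is well defined. (iii) The strong form of the optimality system on each element, namely $\nabla\cdot\bm q+\nabla\cdot(\bm\beta y)+\sigma y=f$ and $\bm q=-\varepsilon\nabla y$ for the state, together with the analogous dual identities $\nabla\cdot\bm p-\nabla\cdot(\bm\beta z)+(\sigma+\nabla\cdot\bm\beta)z=y-y_d$ and $\bm p=-\varepsilon\nabla z$. (iv) Single-valuedness of $y$ and of $\bm q\cdot\bm n$ across interior faces, and the boundary facts $\widehat w_h^o|_\Gamma=0$, $\widetilde\Pi^\partial_k(\cdot)|_\Gamma=0$, $y=u$ and $z=0$ on $\Gamma$.

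For \eqref{Co-error1}: the three ``$\bm r_1$-terms'' of $\mathcal B_1(\bm\Pi^o_k\bm q,\Pi^o_k y,\widetilde\Pi^\partial_k y;\cdot)$ reduce, via (i) and $\bm q=-\varepsilon\nabla y$ plus one integration by parts, to $\langle\widetilde\Pi^\partial_k y-y,\bm r_1\cdot\bm n\rangle_{\partial\mathcal T_h}$; since $\Pi^\partial_k y-y$ is $L^2(E)$-orthogonal to $\mathcal P_k(E)$ on interior faces and $\widetilde\Pi^\partial_k y=0$, $y=u$ on $\Gamma$, this equals $-\langle\Pi^\partial_k u,\bm r_1\cdot\bm n\rangle_{\mathcal E_h^\partial}$. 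The pair $-(w_1,\nabla\cdot\bm\Pi^o_k\bm q)_{\mathcal T_h}+\langle\widehat w_1^o,\bm\Pi^o_k\bm q\cdot\bm n\rangle_{\partial\mathcal T_h}$, after integrating by parts twice (using (i) to pass $\bm\Pi^o_k\bm q\mapsto\bm q$ against $\nabla w_1$) and using that $\bm q\cdot\bm n$ is single-valued and $\widehat w_1^o$ vanishes on $\Gamma$, becomes $-(w_1,\nabla\cdot\bm q)_{\mathcal T_h}+\langle\widehat w_1^o-w_1,(\bm\Pi^o_k\bm q-\bm q)\cdot\bm n\rangle_{\partial\mathcal T_h}$, the second piece being a term of $E_1$. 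For the reaction/convection terms $(\Pi^o_k y,\bm\beta\cdot\nabla w_1)_{\mathcal T_h}-\langle\widetilde\Pi^\partial_k y,\bm\beta\cdot\bm n w_1\rangle_{\partial\mathcal T_h}-(\sigma\Pi^o_k y,w_1)_{\mathcal T_h}$: adding and subtracting $y$ produces the $E_1$ terms $(\Pi^o_k y-y,\bm\beta\cdot\nabla w_1)_{\mathcal T_h}$ and $-(\sigma(\Pi^o_k-\mathbb I)y,w_1)_{\mathcal T_h}$; integrating $(y,\bm\beta\cdot\nabla w_1)_{\mathcal T_h}$ by parts elementwise and invoking the strong state equation gives $-(f,w_1)_{\mathcal T_h}+(\nabla\cdot\bm q,w_1)_{\mathcal T_h}+\langle(y-\widetilde\Pi^\partial_k y)\bm\beta\cdot\bm n,w_1\rangle_{\partial\mathcal T_h}$, and the $(\nabla\cdot\bm q,w_1)$ term cancels the one produced above. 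Finally the stabilization term splits, using $\widetilde\Pi^\partial_k y=\Pi^\partial_k y$ on interior faces and $\widetilde\Pi^\partial_k y=0$ on $\Gamma$, into the $E_1$ term $-\langle\tau_1(\Pi^o_k y-\Pi^\partial_k y),w_1-\widehat w_1^o\rangle_{\partial\mathcal T_h}$ plus a boundary contribution $-\langle\tau_1\Pi^\partial_k u,w_1\rangle_{\mathcal E_h^\partial}$.

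\textbf{The main obstacle} is the bookkeeping on $\mathcal E_h^\partial$: the boundary pieces just produced read $-\langle\Pi^\partial_k u,\bm r_1\cdot\bm n+\tau_1 w_1\rangle_{\mathcal E_h^\partial}$, whereas \eqref{Co1} requires $-\langle\Pi^\partial_k u,\bm r_1\cdot\bm n+\tau_2 w_1\rangle_{\mathcal E_h^\partial}$. This is reconciled through $\tau_1=\tau_2+\bm\beta\cdot\bm n$ (cf.\ \eqref{def_tau1}--\eqref{def_tau2}): the discrepancy $-\langle\bm\beta\cdot\bm n\,\Pi^\partial_k u,w_1\rangle_{\mathcal E_h^\partial}$ is cancelled exactly by the boundary part of $\langle(y-\widetilde\Pi^\partial_k y)\bm\beta\cdot\bm n,w_1\rangle_{\partial\mathcal T_h}$ --- on $\Gamma$ that part equals $\langle\Pi^\partial_k u\,\bm\beta\cdot\bm n,w_1\rangle_{\mathcal E_h^\partial}$ after inserting and subtracting $\Pi^\partial_k y$ --- while the remaining interior part of $\langle(y-\widetilde\Pi^\partial_k y)\bm\beta\cdot\bm n,w_1\rangle_{\partial\mathcal T_h}$ reassembles, using single-valuedness of $y-\Pi^\partial_k y$ across interior faces together with $\langle\bm\beta\cdot\bm n\,\widehat w_1^o,(\cdot)\rangle_{\partial\mathcal T_h}=0$ for single-valued traces, into the last unmatched term $\langle y-\Pi^\partial_k y,\bm\beta\cdot\bm n(w_1-\widehat w_1^o)\rangle_{\partial\mathcal T_h}$ of $E_1$. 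Summing all leftovers gives precisely $E_1$, proving \eqref{Co-error1}. The identity \eqref{Co-error2} follows by the same scheme applied to $\mathcal B_2$, now starting from $\bm p=-\varepsilon\nabla z$ and the dual equations \eqref{mixed_c1}--\eqref{mixed_d1}; here $z=0$ on $\Gamma$ forces $\widetilde\Pi^\partial_k z=\Pi^\partial_k z$ everywhere, so the stabilization splitting produces no boundary term and no $\tau_1/\tau_2$ reconciliation is needed (recall $\mathcal B_2$ already carries $\tau_2$), there is no boundary-control contribution, and the elementwise integration by parts of $-(z,\bm\beta\cdot\nabla w_2)_{\mathcal T_h}$ combined with the strong dual equation yields the right-hand side $-(y-y_d,w_2)_{\mathcal T_h}$ (the $(\nabla\cdot\bm p,w_2)$ and $((\sigma+\nabla\cdot\bm\beta)z,w_2)$ contributions cancelling against the remaining terms of $\mathcal B_2$); the leftover pieces form $E_2$, the sign changes in $E_2$ relative to $E_1$ simply reflecting the opposite sign of the convection term in \eqref{def_B2}.
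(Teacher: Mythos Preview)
Your proposal is correct and follows essentially the same route as the paper: expand $\mathcal B_1$ (resp.\ $\mathcal B_2$), strip the projections via \eqref{orth}, integrate by parts elementwise, invoke the strong PDE together with $\bm q=-\varepsilon\nabla y$ and $\langle\widehat w_1^o,\bm q\cdot\bm n\rangle_{\partial\mathcal T_h}=0$, and close the boundary bookkeeping with $\tau_1=\tau_2+\bm\beta\cdot\bm n$ and $\langle y-\Pi^\partial_k y,\bm\beta\cdot\bm n\,\widehat w_1^o\rangle_{\partial\mathcal T_h}=0$. One cosmetic remark: in your final reconciliation paragraph the leftover boundary piece $\langle(u-\Pi^\partial_k u)\bm\beta\cdot\bm n,w_1\rangle_{\mathcal E_h^\partial}$ produced by ``inserting and subtracting $\Pi^\partial_k y$'' is exactly the $\mathcal E_h^\partial$-portion of the $E_1$ term $\langle y-\Pi^\partial_k y,\bm\beta\cdot\bm n(w_1-\widehat w_1^o)\rangle_{\partial\mathcal T_h}$, so that term is assembled from \emph{both} the interior part and this boundary remainder, not from the interior part alone.
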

\begin{proof} 
	We only give a proof of \eqref{Co-error1}. By the definition of $\mathcal B_1$ in  \eqref{def_B1}, one gets
	\begin{align*}
	\hspace{1em}&\hspace{-1em} \mathcal B_1(\bm{\Pi}^o_k\bm{q},\Pi^{o}_ky,\widetilde{\Pi}^{\partial}_ky;\bm r_1,w_1,\widehat w_1^o)\nonumber\\
	&=\varepsilon^{-1}(\bm{\Pi}^o_k\bm q,\bm r_1)_{\mathcal{T}_h} -( \Pi^{o}_ky,\nabla \cdot\bm r_1)_{\mathcal{T}_h} +\langle \widetilde{\Pi}^{\partial}_ky,\bm r_1\cdot\bm n\rangle_{\partial\mathcal T_h}\nonumber\\
	&\quad -(w_1,\nabla \cdot\bm{\Pi}^o_k\bm{q})_{\mathcal{T}_h}+\langle \widehat w_1^o,\bm{\Pi}^o_k\bm{q}\cdot\bm{n} \rangle_{\partial\mathcal{T}_h}
	-\langle \tau_1(  \Pi^{o}_ky-\widetilde{\Pi}^{\partial}_ky), w_1-\widehat w_1^o \rangle_{\partial\mathcal{T}_h}\nonumber\\
	&\quad+( \Pi^{o}_ky,\bm\beta\cdot \nabla  w_1)_{\mathcal{T}_h}-\langle \widetilde{\Pi}^{\partial}_ky,\bm \beta\cdot\bm n  w_1\rangle_{\partial\mathcal{T}_h}-(\sigma \Pi_k^oy,w_1)_{\mathcal{T}_h}.
	\end{align*}
	By the orthogonality properties  of $\bm{\Pi}^o_k$, $\Pi^{o}_k$, $\widetilde{\Pi}^{\partial}_k$,
	and the fact $y=u$ on $\mathcal{E}_h^{\partial}$, we have 
	\begin{align*}
	\hspace{1em}&\hspace{-1em}\mathcal B_1(\bm{\Pi}^o_k\bm{q},\Pi^{o}_ky,\widetilde{\Pi}^{\partial}_ky;\bm r_1,w_1,\widehat w_1^o)\nonumber\\
	&=\varepsilon^{-1}(\bm q,\bm r_1)_{\mathcal{T}_h} -( y,\nabla \cdot\bm r_1)_{\mathcal{T}_h} +\langle y,\bm r_1\cdot\bm n\rangle_{\partial\mathcal T_h}
	-\langle  \Pi_k^{\partial}u,\bm r_1 \cdot\bm n \rangle_{\mathcal{E}_h^{\partial}} \nonumber\\
	&\quad +(\nabla w_1,\bm{q})_{\mathcal{T}_h}+\langle \widehat w_1^o-w_1,\bm{\Pi}^o_k\bm{q}\cdot\bm{n} \rangle_{\partial\mathcal{T}_h}
	-\langle \tau_1(  \Pi^{o}_ky-\widetilde{\Pi}^{\partial}_ky), w_1-\widehat{w}_1^o \rangle_{\partial\mathcal{T}_h}\nonumber\\
	&\quad+( \Pi^{o}_ky-y,\bm\beta\cdot \nabla  w_1)_{\mathcal{T}_h}-\langle {\Pi}^{\partial}_ky-y,\bm \beta\cdot\bm n  w_1\rangle_{\partial\mathcal{T}_h}-(\sigma (\Pi_k^oy-y),w_1)_{\mathcal{T}_h}\nonumber\\
	&\quad+(y,\bm\beta\cdot \nabla  w_1)_{\mathcal{T}_h}-\langle y,\bm \beta\cdot\bm n  w_1\rangle_{\partial\mathcal{T}_h}-(\sigma y,w_1)_{\mathcal{T}_h}
	+\langle \Pi_k^{\partial}u,\bm\beta\cdot\bm n w_1 \rangle_{\mathcal E_h^\partial}.
	\end{align*}
	By integration by parts,
	and the fact $\langle \widehat{w}_1^o,\bm{q}\cdot\bm{n} \rangle_{\partial\mathcal{T}_h}=0$, we arrive at
	\begin{align*}
	\hspace{1em}&\hspace{-1em} \mathcal B_1(\bm{\Pi}^o_k\bm{q},\Pi^{o}_ky,\widetilde{\Pi}^{\partial}_ky;\bm r_1,w_1,\widehat w_1^o)\nonumber\\
	&=\varepsilon^{-1}(\bm q,\bm r_1)_{\mathcal{T}_h} +( \nabla y,\bm r_1)_{\mathcal{T}_h} -\langle \Pi^{\partial}_ku,\bm r_1\cdot\bm n\rangle_{\mathcal{E}_h^{\partial}} -(w_1,\nabla\cdot\bm{q})_{\mathcal{T}_h}\nonumber\\
	&\quad
	+\langle \widehat w_1^o-w_1,(\bm{\Pi}^o_k\bm{q}-\bm{q})\cdot\bm{n} \rangle_{\partial\mathcal{T}_h}
	-\langle \tau_1(\Pi^{o}_ky-\Pi^{\partial}_ky), w_1-\widehat{w}_1^o \rangle_{\partial\mathcal{T}_h}\nonumber\\
	&\quad-\langle \tau_1 \Pi^{\partial}_ku,w_1\rangle_{\mathcal{E}_h^{\partial}}+( \Pi^{o}_ky-y,\bm\beta\cdot\nabla w_1)_{\mathcal{T}_h}
	+\langle y-  \Pi^{\partial}_ky,\bm \beta\cdot\bm n  w_1\rangle_{\partial\mathcal{T}_h} \nonumber\\
	&\quad -(\nabla\cdot (\bm\beta y), w_1)_{\mathcal{T}_h}+\langle \bm\beta\cdot\bm{n}\Pi^{\partial}_ku,w_1\rangle_{\mathcal{E}_h^{\partial}}-(\sigma y,w_1)_{\mathcal{T}_h}-(\sigma(\Pi^{o}_k-\mathbb I)y,w_1)_{\mathcal{T}_h}.\nonumber
	\end{align*}
	Then by the facts $\varepsilon^{-1}\bm{q}=-\nabla y$ and  $\nabla\cdot\bm{q}+\nabla\cdot(\bm\beta y)+\sigma y=f$, we have 
	\begin{align}
	\hspace{1em}&\hspace{-1em}\mathcal B_1(\bm{\Pi}^o_k\bm{q},\Pi^{o}_ky,\widetilde{\Pi}^{\partial}_ky;\bm r_1,w_1,\widehat w_1^o)\nonumber\\
	&=-(f,w_1)_{\mathcal{T}_h}-\langle \Pi^{\partial}_k u,\tau_2 w_1+\bm r_1\cdot\bm n\rangle_{\mathcal{E}_h^{\partial}}\nonumber\\
	&\quad+(\Pi^{o}_ky-y,\bm\beta\cdot\nabla w_1)_{\mathcal{T}_h} -(\sigma (\Pi^{o}_k-\mathbb I)y,w_1)_{\mathcal{T}_h}\nonumber\\
	&\quad+\langle \widehat{w}_1^o-w_1,(\bm{\Pi}^o_k\bm{q}-\bm{q})\cdot\bm{n} \rangle_{\partial\mathcal{T}_h}
	-\langle \tau_1(  \Pi^{o}_ky-\Pi^{\partial}_ky), w_1-\widehat{w}_1^o \rangle_{\partial\mathcal{T}_h}\nonumber\\
	&\quad+\langle y-  \Pi^{\partial}_ky,\bm \beta\cdot\bm n ( w_1-\widehat w_1^o)\rangle_{\partial\mathcal{T}_h},\label{betan}
	\end{align}
	where we used 
	$\langle y-  \Pi^{\partial}_ky,\bm \beta\cdot\bm n  \widehat w_1^o\rangle_{\partial\mathcal{T}_h}=0$ in \eqref{betan}.
\end{proof}

By \eqref{Co-error} and \eqref{Co} we have the following error equations.
\begin{lemma} Let $\left(\bm q, y,\bm p, z,u\right)$ and
	$(\bm q_h(u),y_h(u),\widehat{y}^o_h(u),\bm p_h(u),z_h(u),\widehat{z}^o_h(u))\in [ \bm V_h\\ \times W_h\times M^{o}_h]^2$ be
	the solutions of \eqref{mixed} and \eqref{Co}, respectively.  Then
	for all   $(\bm r_1,w_1,\widehat{w}^o_1,\bm r_2,\\
	w_2,\widehat{w}^o_2)\in [\bm V_h\times W_h\times M^{o}_h]^2$, we have 
	\begin{subequations}\label{error00}
		\begin{align}
		\mathcal B_1(\bm{\Pi}^o_k\bm{q}-\bm q_h(u),\Pi^{o}_ky-y_h(u),\widetilde{\Pi}^{\partial}_ky-\widehat{y}_h^o(u);\bm r_1,w_1,\widehat w_1^o)&=E_1(\bm{q},y;w_1,\widehat{w}_1^o),\label{error001}\\
		\mathcal B_2(\bm{\Pi}^o_k\bm p-\bm p_h(u),\Pi^{o}_kz-z_h(u),\widetilde{\Pi}^{\partial}_k z-\widehat{z}_h^o(u);\bm r_2,w_2,\widehat w_2^o)&=-(y-y_h(u),w_2)_{\mathcal{T}_h} \nonumber\\
		&\quad +E_2(\bm{p},z;w_2,\widehat{w}_2^o),\label{error002}
		\end{align}
		where  $E_1$ and $E_2$ are defined in \Cref{main_lemma_step1}.
	\end{subequations}
\end{lemma}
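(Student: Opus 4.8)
The plan is to obtain both error equations by a one-line subtraction: subtract the discrete auxiliary equations \eqref{Co1}--\eqref{Co2} from the consistency identities \eqref{Co-error1}--\eqref{Co-error2} of \Cref{main_lemma_step1}, using that each of the forms $\mathcal B_1$ and $\mathcal B_2$ is linear in its first three arguments $(\bm q_h,y_h,\widehat y_h^o)$.

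For \eqref{error001}: fix $(\bm r_1,w_1,\widehat w_1^o)\in \bm V_h\times W_h\times M_h^o$. By \eqref{Co-error1}, the quantity $\mathcal B_1(\bm{\Pi}^o_k\bm{q},\Pi^{o}_ky,\widetilde{\Pi}^{\partial}_ky;\bm r_1,w_1,\widehat w_1^o)$ equals $-(f,w_1)_{\mathcal{T}_h}-\langle \Pi^{\partial}_ku,\tau_2 w_1+\bm r_1\cdot\bm n\rangle_{\mathcal{E}_h^{\partial}}+E_1(\bm{q},y;w_1,\widehat{w}_1^o)$, while by \eqref{Co1} the quantity $\mathcal B_1(\bm q_h(u),y_h(u),\widehat y_h(u);\bm r_1,w_1,\widehat w_1^o)$ equals exactly the same data term $-(f,w_1)_{\mathcal{T}_h}-\langle \Pi^{\partial}_ku,\tau_2 w_1+\bm r_1\cdot\bm n\rangle_{\mathcal{E}_h^{\partial}}$. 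Subtracting and invoking linearity of $\mathcal B_1$ in its first slot, the data terms cancel identically and one is left with $\mathcal B_1(\bm{\Pi}^o_k\bm{q}-\bm q_h(u),\Pi^{o}_ky-y_h(u),\widetilde{\Pi}^{\partial}_ky-\widehat{y}_h^o(u);\bm r_1,w_1,\widehat w_1^o)=E_1(\bm{q},y;w_1,\widehat{w}_1^o)$, which is \eqref{error001}.

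For \eqref{error002}: fix $(\bm r_2,w_2,\widehat w_2^o)$. Equation \eqref{Co-error2} reads $\mathcal B_2(\bm{\Pi}^o_k\bm p,\Pi^{o}_kz,\widetilde{\Pi}^{\partial}_k z;\bm r_2,w_2,\widehat w_2^o)=-(y-y_d,w_2)_{\mathcal{T}_h}+E_2(\bm{p},z;w_2,\widehat{w}_2^o)$, and \eqref{Co2} reads $\mathcal B_2(\bm p_h(u),z_h(u),\widehat z_h(u);\bm r_2,w_2,\widehat w_2^o)=-(y_h(u)-y_d,w_2)_{\mathcal{T}_h}$. Subtracting and using linearity of $\mathcal B_2$, the $y_d$ contributions cancel and the two $L^2(\Omega)$ terms combine as $-(y-y_d,w_2)_{\mathcal{T}_h}+(y_h(u)-y_d,w_2)_{\mathcal{T}_h}=-(y-y_h(u),w_2)_{\mathcal{T}_h}$, which gives \eqref{error002}.

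There is no genuine obstacle here; the only point requiring attention is that the right-hand side data cancel exactly — in particular that the boundary functional $\langle \Pi^{\partial}_ku,\tau_2 w_1+\bm r_1\cdot\bm n\rangle_{\mathcal{E}_h^{\partial}}$ produced by the consistency identity \eqref{Co-error1} coincides with the one defining the auxiliary problem \eqref{Co1}, a matching that was arranged precisely in the statement of \Cref{main_lemma_step1}. Once this is noted, both error equations follow immediately.
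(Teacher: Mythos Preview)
Your argument is correct and is exactly what the paper does: the lemma is stated immediately after \Cref{main_lemma_step1} with the one-line justification ``By \eqref{Co-error} and \eqref{Co} we have the following error equations,'' i.e., subtract \eqref{Co1}--\eqref{Co2} from \eqref{Co-error1}--\eqref{Co-error2} and use linearity of $\mathcal B_1,\mathcal B_2$ in the first slot. Your remark that the boundary data term in \eqref{Co-error1} was arranged to match that in \eqref{Co1} is the only nontrivial observation, and it is correct.
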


\begin{lemma} \label{energyforE1andE2}
	Let $\left(\bm q, y,\bm p, z\right)$ be the solution of  \eqref{mixed}. Then for all  $(w_1,w_2,\widehat w_1^o,\widehat w_2^o)\in [W_h\times M_h^o]^2$, we have 
	\begin{eqnarray}
	|E_1(\bm{q},y;w_1,\widehat w_1^o)|&\le C&\left(h^{s_y-1/2}\|y\|_{s_y}+\delta(s_y)\varepsilon^{1/2}h\|\Delta y\|_{\mathcal{T}_h}\right)\norm{(w_1,\widehat w_1^o)}_W,\label{es1}\\
	|E_2(\bm{p},z;w_2,\widehat{w}_2^o)|&\le C& h^{s_z-1/2}\|z\|_{s_z}\norm{(w_2,\widehat{w}_2^o)}_W. \label{es2}
	\end{eqnarray}
\end{lemma}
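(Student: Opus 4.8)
The plan is to bound $E_1$ and $E_2$ term by term against the norm $\norm{(\cdot,\cdot)}_W$. For $E_1$ I would treat its five summands in three groups: the two volume terms $(\Pi_k^o y-y,\bm\beta\cdot\nabla w_1)_{\mathcal T_h}$ and $(\sigma(\Pi_k^o-\mathbb I)y,w_1)_{\mathcal T_h}$; the two ``boundary data'' face terms involving $\bm\beta\cdot\bm n$ and $\tau_1$; and the flux face term $\langle\widehat w_1^o-w_1,(\bm\Pi_k^o\bm q-\bm q)\cdot\bm n\rangle_{\partial\mathcal T_h}$, which is the crux.

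For the volume terms I would use the $L^2$-orthogonality of $\Pi_k^o$ to replace $\bm\beta\cdot\nabla w_1$ by $(\mathbb I-\Pi_k^o)(\bm\beta\cdot\nabla w_1)$, apply the super-approximation estimate \eqref{compo_0T} (componentwise to $\bm\beta$, with $u_h$ a component of $\nabla w_1$), the approximation estimate \eqref{stab_4} for $y$, and a \emph{local} inverse inequality $\|\nabla w_1\|_T\le Ch_T^{-1}\|w_1\|_T$; since $\beta_0>0$ gives $\|w_1\|_{\mathcal T_h}\le C\norm{(w_1,\widehat w_1^o)}_W$, this group contributes $Ch^{s_y}\|y\|_{s_y}\norm{(w_1,\widehat w_1^o)}_W$. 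Using a local rather than a global (quasi-uniform) inverse inequality here is what allows the merely shape-regular mesh hypothesis. For the boundary-data face terms, I would use $|\bm\beta\cdot\bm n|\le\|\bm\beta\cdot\bm n\|_{0,\infty,\partial T}\le\tau$, $\tau_1\le C\tau$, and the fact that by \textbf{(A3)} the constant $\tau=\|\bm\beta\cdot\bm n\|_{0,\infty,\partial T}+\varepsilon h_T^{-1}$ is bounded on each $\partial T$; Cauchy--Schwarz in the $\tau$-weighting together with the trace approximation \eqref{stab_6} (and $\|y-\Pi_k^\partial y\|_{\partial T}\le\|y-\Pi_k^o y\|_{\partial T}$) then gives $Ch^{s_y-1/2}\|y\|_{s_y}\norm{(w_1,\widehat w_1^o)}_W$.

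For the flux term I would split according to whether $\bm q=-\varepsilon\nabla y$ has an $L^2$ boundary trace. If $s_y>3/2$, then $\bm q\in[H^{s_y-1}(\Omega)]^d$ with $s_y-1>1/2$; Cauchy--Schwarz in the $\tau$-weighting, $\|\tau^{-1/2}\|_{L^\infty(\partial T)}\le(\varepsilon h_T^{-1})^{-1/2}=\varepsilon^{-1/2}h_T^{1/2}$, the trace approximation $\|\bm\Pi_k^o\bm q-\bm q\|_{\partial T}\le C\varepsilon h_T^{s_y-3/2}|y|_{s_y,T}$, and \textbf{(A3)} (which turns the surviving $\varepsilon^{1/2}$ into $h_T^{1/2}$) yield $Ch^{s_y-1/2}\|y\|_{s_y}\norm{(w_1,\widehat w_1^o)}_W$ and no $\Delta y$ term, consistent with $\delta(s_y)=0$. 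If $s_y\le 3/2$, the trace of $\bm q$ need not exist, and I would exploit that $\bm q\in\bm H(\mathrm{div},\Omega)$ with $\nabla\cdot\bm q=-\varepsilon\Delta y\in L^2(\Omega)$ by \Cref{MT210}. Writing $w_1^c:=\mathcal I_h^c(w_1,\widehat w_1^o)\in H^1_0(\Omega)$, property \eqref{I*2} gives $\langle\widehat w_1^o-w_1^c,g\rangle_E=0$ for every $E\in\mathcal E_h$ and every $g\in\mathcal P_k(E)$, so $\langle\widehat w_1^o-w_1^c,\bm\Pi_k^o\bm q\cdot\bm n\rangle_{\partial\mathcal T_h}=0$; combined with $\langle\widehat w_1^o,\bm q\cdot\bm n\rangle_{\partial\mathcal T_h}=\langle w_1^c,\bm q\cdot\bm n\rangle_{\partial\mathcal T_h}=0$ (continuity of the normal trace of $\bm q$ and the zero boundary values), the flux term collapses to $\langle w_1^c-w_1,(\bm\Pi_k^o\bm q-\bm q)\cdot\bm n\rangle_{\partial\mathcal T_h}$. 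Element-wise integration by parts converts this to $(\nabla w_1^c,\bm\Pi_k^o\bm q-\bm q)_{\mathcal T_h}+\varepsilon(w_1^c-w_1,\Delta y)_{\mathcal T_h}$, using the orthogonality $(\nabla w_1,\bm\Pi_k^o\bm q-\bm q)_T=0$ and $(w_1^c-w_1,\nabla\cdot\bm\Pi_k^o\bm q)_T=0$ (since $\nabla\cdot\bm\Pi_k^o\bm q\in W_h$ and \eqref{I*1}). The first piece is bounded by $Ch^{s_y-1/2}\|y\|_{s_y}\norm{(w_1,\widehat w_1^o)}_W$ via $\|\bm\Pi_k^o\bm q-\bm q\|_T\le C\varepsilon h_T^{s_y-1}|y|_{s_y,T}$, the estimate \eqref{I*3} for $\|\nabla w_1^c\|_{\mathcal T_h}$, and \textbf{(A3)}; the second piece is bounded by $C\varepsilon^{1/2}h\|\Delta y\|_{\mathcal T_h}\norm{(w_1,\widehat w_1^o)}_W$ via \eqref{I*4}, the inequality $h_E^{-1/2}\le C\varepsilon^{-1/2}\tau^{1/2}$ (from $\tau\ge\varepsilon h_T^{-1}$), and \textbf{(A3)} — this is the origin of the $\delta(s_y)\varepsilon^{1/2}h\|\Delta y\|_{\mathcal T_h}$ term in \eqref{es1}.

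Finally, $E_2$ is estimated by the same scheme but more simply: since $z\in H^{r_z}(\Omega)$ with $r_z\ge 2$, the dual flux $\bm p=-\varepsilon\nabla z$ always has a controlled boundary trace, so the flux face term of $E_2$ is always in the ``high regularity'' situation (\textbf{(A3)} again absorbing the $\varepsilon^{1/2}$), and no $\Delta z$ term appears; this gives \eqref{es2}, and collecting everything yields \eqref{es1}. I expect the low-regularity case of the flux term to be the main obstacle: making the element-wise integration by parts legitimate when $\bm q$ has no $L^2$ trace, and checking that the identities $\langle\widehat w_1^o-w_1^c,\bm\Pi_k^o\bm q\cdot\bm n\rangle_{\partial\mathcal T_h}=0$, $(\nabla w_1,\bm\Pi_k^o\bm q-\bm q)_T=0$ and $(w_1^c-w_1,\nabla\cdot\bm\Pi_k^o\bm q)_T=0$ collapse it down to something that closes against $\norm{(\cdot,\cdot)}_W$ with an $\varepsilon$-uniform constant.
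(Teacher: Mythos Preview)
Your proposal is correct and follows essentially the same approach as the paper: the five summands of $E_1$ are bounded individually, the convection volume term via orthogonality plus a local inverse inequality, the $\tau_1$ and $\bm\beta\cdot\bm n$ face terms via the $\tau$-weighted Cauchy--Schwarz and \textbf{(A3)}, and the flux face term by a regularity split with the conforming interpolant $\mathcal I_h^c$ handling the low-regularity case. Your organization of the low-regularity flux argument---first collapsing $\langle\widehat w_1^o-w_1,(\bm\Pi_k^o\bm q-\bm q)\cdot\bm n\rangle_{\partial\mathcal T_h}$ to $\langle w_1^c-w_1,(\bm\Pi_k^o\bm q-\bm q)\cdot\bm n\rangle_{\partial\mathcal T_h}$ and then integrating by parts once---is a mild rearrangement of the paper's sequence of integrations by parts, but both routes land on the same two pieces $(\nabla\mathcal I_h^c,\bm\Pi_k^o\bm q-\bm q)_{\mathcal T_h}$ and $\varepsilon(\Delta y,w_1-\mathcal I_h^c)_{\mathcal T_h}$ and bound them identically via \eqref{I*3}--\eqref{I*4} and \textbf{(A3)}.
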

\begin{proof} Since the proof for \eqref{es2} is similar to the proof of  \eqref{es1}, we only prove \eqref{es1}. To simplify notation, we write $E_1$ from \Cref{main_lemma_step1} as $E_1(\bm{q},y;v_h,\widehat{v}_h^o)=\sum_{i=1}^5R_i$. For the term $R_1$, 
	since $((\mathbb I-\Pi^{o}_k)y,\bm{\Pi}_0^o\bm{\beta}\cdot\nabla  w_1)_{\mathcal{T}_h}=0$, we get
	\begin{align*}
	|R_1|&= |((\mathbb I-\Pi^{o}_k)y,(\bm{\beta}-\bm{\Pi}_0^o\bm{\beta})\cdot\nabla  w_1)_{\mathcal{T}_h}|\nonumber\\
	&\le C  h^{s_y}|\bm{\beta}|_{1,\infty}\|y\|_{s_y}\left(\sum_{T\in\mathcal{T}_h}h_T^2\|\nabla w_1\|^2_{T}\right)^{1/2}\nonumber\\
	&\le C  h^{s_y} \|y\|_{s_y}\|w_1\|_{\mathcal{T}_h}    & \textup{by} \  \eqref{stab_7},\\
	&\le C h^{s_y}\|y\|_{s_y}\norm{(w_1,\widehat{w}_1^o)}_W  & \textup{by} \  \eqref{def_W}.
	\end{align*}
	For the term $R_2$, by a direct estimate,  we get
	\begin{align*}
	|R_2|
	&\le C \|\sigma\|^{1/2}_{0,\infty}\|(\mathbb I-\Pi^{o}_k)y\|_{\mathcal{T}_h}\|\sigma^{1/2} w_1\|_{\mathcal{T}_h}\nonumber\\
	&\le C h^{s_y}\|\sigma\|^{1/2}_{0,\infty}\|y\|_{s_y}\norm{(w_1,\widehat{w}_1^o)}_W\nonumber & \textup{by} \  \eqref{def_W},\\
	&\le C h^{s_y}(\beta_0^{1/2}+\sigma_0^{1/2})\|y\|_{s_y}\norm{(w_1,\widehat{w}_1^o)}_W\nonumber\\
	&\le C h^{s_y}\|y\|_{s_y}\norm{(w_1,\widehat{w}_1^o)}_W.
	\end{align*}
	For the term $R_3$, we need a refined analysis since this term relates to the boundary trace of  the gradient of $y$.  Below, we use  $\bm q=-\varepsilon \nabla y$ and $\varepsilon< \min_{T\in \mathcal T_h}\{h_T\}$.
	
	If $s_y> 3/2$,   we have
	\begin{align*}
	|R_3|&=\varepsilon|\langle \bm{n} \cdot( \nabla y-\bm{\Pi}_k^o\nabla y),w_1-\widehat w_{1}^o\rangle_{\partial \mathcal{T}_h}|
	\nonumber\\
	&\le C  h^{s_y-1}\varepsilon^{1/2}\|y\|_{s_y}\left(\sum_{T\in\mathcal{T}_h}\varepsilon h_T^{-1} \|w_1-\widehat w_{1}^o\|^2_{\partial T}\right)^{1/2}\nonumber\\
	&\le C h^{s_{y}-1/2}\|y\|_{s_{y}}\norm{(w_1,\widehat{w}_1^o)}_W.
	\end{align*}
	If $s_y\le3/2$, use
	$\langle \bm n\cdot\nabla y,\widehat w_1^o\rangle_{\partial\mathcal{T}_h}=0$
	and integration by parts to obtain
	\begin{eqnarray}
	|R_3|&=&\varepsilon|\langle \bm{n} \cdot \nabla y,w_1\rangle_{\partial \mathcal{T}_h}
	-\langle \bm{n}\cdot\bm{\Pi}_k^o\nabla y,w_1-\widehat w_1^o\rangle_{\partial \mathcal{T}_h}|
	\nonumber\\
	&=&\varepsilon|(\Delta y, w_1)_{\mathcal{T}_h}+(\bm{\Pi}_k^o\nabla y,\nabla  w_1)_{\mathcal{T}_h}-\langle \bm{n}\cdot\bm{\Pi}_k^o\nabla y,w_1-\widehat{w}_{1}^o\rangle_{\partial \mathcal{T}_h}|.\nonumber
	\end{eqnarray}
	We use integration by parts again, and  also \eqref{I*2} and \eqref{I*1} to get
	\begin{eqnarray}
	|R_3|
	&=&\varepsilon|(\Delta y, w_1)_{\mathcal{T}_h}-(\nabla\cdot\bm{\Pi}_k^o\nabla y,w_1)_{\mathcal{T}_h}+\langle \bm{n}\cdot\bm{\Pi}_k^o\nabla y, \widehat{w}_{1}^o\rangle_{\partial \mathcal{T}_h}|\nonumber\\
	&=&\varepsilon|(\Delta y, w_1)_{\mathcal{T}_h}-(\nabla\cdot\bm{\Pi}_k^o\nabla y,\mathcal{I}_{h}(w_1,\widehat{w}_1^o))_{\mathcal{T}_h}+\langle \bm{n}\cdot\bm{\Pi}_k^o\nabla y, \mathcal{I}_{h}(w_1,\widehat{w}_1^o)\rangle_{\partial \mathcal{T}_h}|\nonumber\\
	&=&\varepsilon|(\Delta y, w_1)_{\mathcal{T}_h}+(\bm{\Pi}_k^o\nabla y,\nabla\mathcal{I}_{h}(w_1,\widehat{w}_1^o))_{\mathcal{T}_h}|.\nonumber
	\end{eqnarray}
	Therefore, by the triangle inequality, integration by parts, \eqref{I*3} and \eqref{I*4} we have 
	\begin{align*}
	|R_3|&\le \varepsilon|(\Delta y,{\mathcal{I}}_h(w_1,\widehat{w}_1^o))_{\mathcal{T}_h}+(\bm{\Pi}^o_k\nabla y,{\mathcal{I}}_h(w_1,\widehat{w}_1^o) )_{\mathcal{T}_h}|
	+\varepsilon|(\Delta y, w_1-{\mathcal{I}}_h(w_1,\widehat{w}_1^o))_{\mathcal{T}_h}| \nonumber\\
	&= \varepsilon|(\nabla y-\bm{\Pi}^o_k\nabla y,\nabla{\mathcal{I}}_h(w_1,\widehat{w}_1^o))_{\mathcal{T}_h}|
	+\varepsilon|(\Delta y, w_1-\mathcal{I}_{h}(w_1,\widehat{w}_1^o))_{\mathcal{T}_h}| \nonumber\\
	&\le C \varepsilon^{1/2}( h^{s_y-1}\|y\|_{s_y}+h\|\Delta y\|_{\mathcal{T}_h}  )\norm{(w_1,\widehat{w}_1^o)}_W\nonumber\\
	&\le C (h^{s_y-1/2} \|y\|_{s_y}+\varepsilon^{1/2}h\|\Delta y\|_{\mathcal{T}_h}  )\norm{(w_1,\widehat{w}_1^o)}_W.
	\end{align*}
	For the terms $R_4$ and $R_5$, use the Cauchy-Schwarz inequality and \eqref{condition-beta} to get
	\begin{align*}
	|R_4|&= | \langle \bm{\beta}\cdot\bm{n} (y-\Pi_k^{\partial} y),w_{1}-\widehat{w}_1^{o} \rangle_{\partial \mathcal{T}_h}|\nonumber\\
	&\le C h^{s_y-1/2} \|\bm{\beta}\|^{1/2}_{0,\infty}\|y\|_{s_y}\left(\sum_{T\in\mathcal{T}_h} | \bm{\beta}\cdot\bm{n}|^{1/2}(w_{1}-\widehat{w}_1^{o})\|^2_{\partial T}\right)^{1/2}\nonumber\\
	&\le C h^{s_y-1/2}\|y\|_{s_y}\norm{(w_1,\widehat{w}_1^o)}_W,\\
	|R_5|&= | \langle \tau_1 (\Pi^{o}_k-\Pi_k^{\partial})y,w_{1}-\widehat{w}_1^{o} \rangle_{\partial \mathcal{T}_h}|\nonumber\\
	&\le Ch^{s_y-1}(\beta_0^{1/2}h^{1/2}+\varepsilon^{1/2})\|y\|_{s_y}\norm{(w_1,\widehat{w}_1^o)}_W\nonumber\\
	&\le Ch^{s_y-1/2}\|y\|_{s_y}\norm{(w_1,\widehat{w}_1^o)}_W.
	\end{align*}
	From all the estimates above we get our final result.
\end{proof}
\begin{lemma} \label{theorem4.4}
	Let $\left(\bm q, y,\bm p, z\right)$ and
	$(\bm q_h(u),y_h(u),\widehat{y}^o_h(u), $ $\bm p_h(u),z_h(u),\widehat{z}^o_h(u))\in [\bm V_h\times W_h\times M^{o}_h]^2$ be
	the solutions of \eqref{mixed} and \eqref{Co}, respectively.  If assumptions  \textbf{(A1)} and \textbf{(A2)} hold,  then there exists $h_0>0$, independent of $\varepsilon$,  such that for all $h\le h_0$ we have the error estimates
	\begin{align*}
	\|y-y_h(u)\|_{\mathcal{T}_h}&\le C
	\left(
	h^{s_y-1/2}\|y\|_{s_y}+\delta(s_y)\varepsilon^{1/2}h\|\Delta y\|_{\mathcal{T}_h}
	\right),\\
	\|z-z_h(u)\|_{\mathcal{T}_h}&\le C
	\left(
	h^{s_y-1/2}\|y\|_{s_y}+h^{s_z-1/2}\|z\|_{s_z}+\delta(s_y)\varepsilon^{1/2}h\|\Delta y\|_{\mathcal{T}_h}
	\right) ,\\
	%\|\bm q-\bm q_h(u)\|_{\mathcal{T}_h}&\le C
	%\varepsilon^{1/2}
	%\left(
	%h^{s_y-1/2}\|y\|_{s_y}+\delta(s_y)\varepsilon^{1/2}h\|\Delta y\|_{\mathcal{T}_h}
	%\right),\\
	\|\bm p-\bm p_h(u)\|_{\mathcal{T}_h}&\le C
	\varepsilon^{1/2}\left(
	h^{s_y-1/2}\|y\|_{s_y}+h^{s_z-1/2}\|z\|_{s_z}+\delta(s_y)\varepsilon^{1/2}h\|\Delta y\|_{\mathcal{T}_h}
	\right) .
	\end{align*}
\end{lemma}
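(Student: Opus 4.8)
The plan is to estimate each quantity by inserting the $L^2$-projection of the exact function, controlling the projection error with \Cref{approximation}, and controlling the remaining ``discrete'' error by combining the strong inf-sup stability of \Cref{LBB} with the error equations \eqref{error00} and the bounds on $E_1,E_2$ from \Cref{energyforE1andE2}. For the state, write $\|y-y_h(u)\|_{\mathcal T_h}\le\|y-\Pi_k^o y\|_{\mathcal T_h}+\|\Pi_k^o y-y_h(u)\|_{\mathcal T_h}$, where the first term is $\le Ch^{s_y}\|y\|_{s_y}$ by \eqref{stab_4}. For the second term, I would apply \eqref{stab-1} to $\theta_y:=(\bm{\Pi}^o_k\bm q-\bm q_h(u),\,\Pi_k^o y-y_h(u),\,\widetilde{\Pi}^{\partial}_k y-\widehat y_h^o(u))$; by the error equation \eqref{error001} the numerator $\mathcal B_1(\theta_y;\bm r_h,w_h,\widehat w_h^o)$ equals $E_1(\bm q,y;w_h,\widehat w_h^o)$, so \eqref{es1} together with $\norm{(w_h,\widehat w_h^o)}_W\le\norm{(\bm r_h,w_h,\widehat w_h^o)}$ (immediate from \eqref{def_trip}) yields $\norm{\theta_y}\le C(h^{s_y-1/2}\|y\|_{s_y}+\delta(s_y)\varepsilon^{1/2}h\|\Delta y\|_{\mathcal T_h})$. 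Since $\beta_0>0$, the term $\|(\beta_0+\bar\sigma)^{1/2}(\Pi_k^o y-y_h(u))\|_{\mathcal T_h}$ is a summand of $\norm{\theta_y}^2$, hence $\beta_0^{1/2}\|\Pi_k^o y-y_h(u)\|_{\mathcal T_h}\le\norm{\theta_y}$, which gives the first estimate.

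Next I would repeat the argument for $z$ and $\bm p$ using $\mathcal B_2$, the stability \eqref{stab-3}, the error equation \eqref{error002}, and \eqref{es2}, applied to $\theta_z:=(\bm{\Pi}^o_k\bm p-\bm p_h(u),\,\Pi_k^o z-z_h(u),\,\widetilde{\Pi}^{\partial}_k z-\widehat z_h^o(u))$. The new feature is the coupling term $-(y-y_h(u),w_2)_{\mathcal T_h}$ on the right-hand side of \eqref{error002}; bounding $|(y-y_h(u),w_2)_{\mathcal T_h}|\le\|y-y_h(u)\|_{\mathcal T_h}\|w_2\|_{\mathcal T_h}\le C\|y-y_h(u)\|_{\mathcal T_h}\norm{(w_2,\widehat w_2^o)}_W$ (using $(\beta_0+\bar\sigma)^{1/2}\ge\beta_0^{1/2}>0$) and inserting the already proved bound for $\|y-y_h(u)\|_{\mathcal T_h}$ gives $\norm{\theta_z}\le C(h^{s_y-1/2}\|y\|_{s_y}+h^{s_z-1/2}\|z\|_{s_z}+\delta(s_y)\varepsilon^{1/2}h\|\Delta y\|_{\mathcal T_h})$. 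From \eqref{def_trip} I then extract $\|\Pi_k^o z-z_h(u)\|_{\mathcal T_h}\le\beta_0^{-1/2}\norm{\theta_z}$ and $\|\bm{\Pi}^o_k\bm p-\bm p_h(u)\|_{\mathcal T_h}\le\varepsilon^{1/2}\norm{\theta_z}$, and add the projection errors $\|z-\Pi_k^o z\|_{\mathcal T_h}\le Ch^{s_z}\|z\|_{s_z}$ and $\|\bm p-\bm{\Pi}^o_k\bm p\|_{\mathcal T_h}$ to conclude.

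The one point requiring care, and which I expect to be the main obstacle, is keeping every constant independent of $\varepsilon$ in the flux estimate. Since $\bm p=-\varepsilon\nabla z$, the approximation bound \eqref{stab_4} only gives $\|\bm p-\bm{\Pi}^o_k\bm p\|_{\mathcal T_h}=\varepsilon\|\nabla z-\bm{\Pi}^o_k\nabla z\|_{\mathcal T_h}\le C\varepsilon h^{s_z-1}\|z\|_{s_z}$, which carries the ``wrong'' powers of $\varepsilon$ and $h$; here I would use assumption \textbf{(A3)} elementwise, $\varepsilon<h_T$, to write $\varepsilon h_T^{s_z-1}=(\varepsilon^{1/2}h_T^{-1/2})\,\varepsilon^{1/2}h_T^{s_z-1/2}\le\varepsilon^{1/2}h_T^{s_z-1/2}$ and absorb this into $C\varepsilon^{1/2}h^{s_z-1/2}\|z\|_{s_z}$. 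Beyond this, the argument is essentially bookkeeping; note that the estimates must be derived in the order $y$, then $z$ and $\bm p$, because of the coupling term in \eqref{error002}, that the threshold $h_0$ is the one from \Cref{LBB}, and that the well-posedness of the auxiliary problem \eqref{Co} is itself a consequence of \Cref{LBB}.
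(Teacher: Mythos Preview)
Your proposal is correct and mirrors the paper's proof almost exactly: the paper also splits each error via the $L^2$-projection, invokes \Cref{LBB} together with the error equations \eqref{error00} and the bounds \eqref{es1}--\eqref{es2} to control the discrete part, feeds the already-established $y$-estimate into the coupling term of \eqref{error002}, and for the flux uses $\bm p=-\varepsilon\nabla z$ together with $\varepsilon<\min_{T}h_T$ to absorb the projection error into $C\varepsilon^{1/2}h^{s_z-1/2}\|z\|_{s_z}$. Your observation that assumption \textbf{(A3)} is needed for the $\bm p$-estimate is on point; the paper's own proof invokes it as well, even though the lemma's stated hypotheses list only \textbf{(A1)} and \textbf{(A2)}.
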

\begin{proof} 
	By \Cref{LBB}, \eqref{error001} and \eqref{es1} we get
	\begin{align}
	\hspace{1em}&\hspace{-1em}\|(\bm{\Pi}^o_k\bm{q}-\bm{q}_h(u),\Pi^{o}_ky-y_h(u),\widetilde{\Pi}^{\partial}_ky-\widehat{y}_h^o(u))\|\nonumber\\
	&\le C \sup_{{(\bm r_1,w_1,\widehat w_1^o)\in \bm{V}_h\times W_h\times M_h^o}}\frac{\mathcal B_1(\bm{\Pi}^o_k\bm{q}-\bm{q}_h(u),\Pi^{o}_ky-y_h(u),\widetilde{\Pi}^{\partial}_ky-\widehat{y}_h^o(u);\bm r_1,w_1,\widehat w_1^o)}{\norm{(\bm r_1,w_1,\widehat w_1^o)}}\nonumber\\
	&\le C
	\sup_{{(\bm r_1,w_1,\widehat w_1^o)\in \bm{V}_h\times W_h\times M_h^o}}\frac{E_1(\bm{q},y;v_h,\widehat{v}_h^o)
	}{\norm{(\bm r_1,w_1,\widehat w_1^o)}}\nonumber\\
	&\le C \left(h^{s_y-1/2}\|y\|_{s_y}+\delta(s_y)\varepsilon^{1/2}h\|\Delta y\|_{\mathcal{T}_h}\right).\nonumber
	\end{align}
	Therefore,
	\begin{align}
	\hspace{1em}&\hspace{-1em}\ |y-y_h(u)\|_{\mathcal{T}_h}\nonumber\\
	& \le \|y-\Pi^{o}_ky\|_{\mathcal{T}_h} +\|\Pi^{o}_ky-y_h(u)\|_{\mathcal{T}_h}\nonumber\\
	&\le
	\|y-\Pi^{o}_ky\|_{\mathcal{T}_h} +C \|(\bm{\Pi}^o_k\bm{q}-\bm{q}_h(u),\Pi^{o}_ky-y_h(u),\widetilde{\Pi}^{\partial}_ky-\widehat{y}_h^o(u))\| \nonumber \\
	&\le C \left(h^{s_y-1/2}\|y\|_{s_y}+\delta(s_y)\varepsilon^{1/2}h\|\Delta y\|_{\mathcal{T}_h}\right).\label{y-p}
	\end{align}
	By  \Cref{LBB}, \eqref{error002}, \eqref{es2} and estimate \eqref{y-p} we get
	\begin{align}
	\hspace{1em}&\hspace{-1em} \|(\bm{\Pi}^o_k\bm{p}-\bm{p}_h(u),\Pi^{o}_kz-z_h(u),\widetilde{\Pi}^{\partial}_kz-\widehat{z}_h^o(u))\|\nonumber\\
	&\le C
	\sup_{(\bm{r}_2,w_2,\widehat w_2^o)\in \bm{V}_h\times W_h\times M_h^o}\frac{\mathcal B_2(\bm{\Pi}^o_k\bm{p}-\bm{p}_h(u),\Pi^{o}_kz-z_h(u),\widetilde{\Pi}^{\partial}_kz-\widehat{z}_h^o(u);\bm{r}_2,w_2,\widehat w_2^o)}{\norm{(\bm{r}_2,w_2,\widehat w_2^o)}}\nonumber\\
	&\le C
	\sup_{(\bm r_2,w_2,\widehat w_2^o)\in \bm{V}_h\times W_h\times M_h^o}\frac{E_2(\bm{p},z;w_2,\widehat{w}_2^o)
		-(y-y_h(u),w_2)}{\norm{(\bm{r}_2,w_2,\widehat w_2^o)}}\nonumber\\
	&\le C \left(
	h^{s_z-1/2}\|z\|_{s_z}+
	h^{s_y-1/2}\|y\|_{s_y}+\delta(s_y)\varepsilon^{1/2}h\|\Delta y\|_{\mathcal{T}_h}\right).\label{par_z}
	\end{align}
	Therefore, the triangle inequality gives
	\begin{align*}
	&\|z-z_h(u)\|_{\mathcal{T}_h}\nonumber\\
	&\qquad\le \|z-\Pi^{o}_kz\|_{\mathcal{T}_h} +\|\Pi^{o}_kz-z_h(u)\|_{\mathcal{T}_h}\nonumber\\
	&\qquad\le
	\|z-\Pi^{o}_kz\|_{\mathcal{T}_h} +(\beta_0+\sigma_0)^{-1} \|(\bm{\Pi}^o_k\bm{p}-\bm{p}_h(u),\Pi^{o}_kz-z_h(u),\widetilde{\Pi}^{\partial}_kz-\widehat{z}_h^o(u))\| \nonumber\\
	&\qquad\le  C \left(
	h^{s_z-1/2}\|z\|_{s_z}+
	h^{s_y-1/2}\|y\|_{s_y}+\delta(s_y)\varepsilon^{1/2}h\|\Delta y\|_{\mathcal{T}_h}\right).
	\end{align*}
	Next,  we use the triangle inequality, $\bm p=-\varepsilon\nabla z$, and $\varepsilon<\min_{T\in \mathcal T_h}\{h_T\}$  to get
	\begin{align*}
	\|\bm p-\bm p_h(u)\|_{\mathcal{T}_h}&\le \|\bm p-\bm{\Pi}^o_k\bm{p}\|_{\mathcal{T}_h} +\|\bm{\Pi}^o_k\bm{p}-\bm{p}_h(u)\|_{\mathcal{T}_h}\\
	&\le
	\|\bm p-\bm{\Pi}^o_k\bm{p}\|_{\mathcal{T}_h} + \varepsilon^{1/2}\|(\bm{\Pi}^o_k\bm{p}-\bm{p}_h(u),\Pi^{o}_kz-z_h(u),\widetilde{\Pi}^{\partial}_kz-\widehat{z}_h^o(u))\| \\
	&\le C\varepsilon^{1/2}\left(
	h^{s_z-1/2}\|z\|_{s_z}+
	h^{s_y-1/2}\|y\|_{s_y}+\delta(s_y)\varepsilon^{1/2}h\|\Delta y\|_{\mathcal{T}_h}\right).
	\end{align*}
\end{proof}

\subsubsection{Step 2: errors between the  auxiliary problem \eqref{Co} and the  discrete problem \eqref{HDG_full_discrete}}
\label{part II}

By \eqref{HDG_full_discrete} and \eqref{Co} we have  the following error equations.
\begin{lemma} 
	Let $\left(\bm q_h,y_h,\widehat{y}^o_h,\bm p_h,z_h,\widehat{z}^o_h, u_h\right)\in [\bm V_h\times W_h\times M^{o}_h]^2\times M_h^{\partial}$ and $(\bm q_h(u), \\ y_h(u), \widehat{y}^o_h(u),\bm p_h(u),z_h(u),\widehat{z}^o_h(u))\in [\bm V_h\times W_h\times M^{o}_h]^2$ be the solutions of \eqref{HDG_full_discrete} and \eqref{Co}, respectively. Then for all  $\left(\bm r_1,w_1,\widehat{w}^o_1,\bm r_2,w_2,\widehat{w}^o_2\right)\in [\bm V_h\times W_h\times M^{o}_h]^2$, we have 
	\begin{subequations}\label{Error_2}
		\begin{align}
		\mathcal B_1(\bm q_h-\bm q_h(u),y_h-y_h(u),\widehat y^o_h-\widehat y^o_h(u);\bm r_1,w_1,\widehat w_1^o) &=\nonumber\\
		-\langle u_h&-\Pi^{\partial}_ku,\tau_2 w_1+\bm r_1\cdot\bm{n} \rangle_{\mathcal{E}_h^\partial},\label{Error_2_01}\\
		\mathcal B_2(\bm p_h-\bm p_h(u),z_h-z_h(u),\widehat z^o_h-\widehat z^o_h(u);\bm r_2,w_2,\widehat w_2^o)&=-(y_h-y_h(u),w_2)_{\mathcal{T}_h}.\label{Error_2_02}
		\end{align}
	\end{subequations}
\end{lemma}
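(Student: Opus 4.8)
The plan is to obtain \eqref{Error_2} by directly subtracting the equations that define the two discrete solutions, using only the linearity of the forms $\mathcal B_1$ and $\mathcal B_2$ in their first three arguments. Inspecting the definitions \eqref{def_B1}--\eqref{def_B2}, one sees that for fixed test functions $(\bm r_h, w_h, \widehat w_h^o)$ each of $\mathcal B_1(\cdot,\cdot,\cdot; \bm r_h, w_h, \widehat w_h^o)$ and $\mathcal B_2(\cdot,\cdot,\cdot; \bm r_h, w_h, \widehat w_h^o)$ is a linear functional of the triple in the first slot, so $\mathcal B_i(\bm a_1 - \bm a_2; \cdot,\cdot,\cdot) = \mathcal B_i(\bm a_1; \cdot,\cdot,\cdot) - \mathcal B_i(\bm a_2; \cdot,\cdot,\cdot)$ for $i = 1,2$.

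For \eqref{Error_2_01}, I would fix an arbitrary $(\bm r_1, w_1, \widehat w_1^o) \in \bm V_h \times W_h \times M_h^o$, write \eqref{HDG_full_discrete_a} for the discrete solution $(\bm q_h, y_h, \widehat y_h^o, \ldots, u_h)$ and \eqref{Co1} for the auxiliary solution $(\bm q_h(u), y_h(u), \widehat y_h^o(u))$, and subtract. On the right-hand side the forcing term $-(f, w_1)_{\mathcal T_h}$ cancels, and the two boundary contributions $-\langle u_h, \tau_2 w_1 + \bm r_1 \cdot \bm n\rangle_{\mathcal E_h^\partial}$ and $-\langle \Pi_k^\partial u, \tau_2 w_1 + \bm r_1 \cdot \bm n\rangle_{\mathcal E_h^\partial}$ combine into $-\langle u_h - \Pi_k^\partial u, \tau_2 w_1 + \bm r_1 \cdot \bm n\rangle_{\mathcal E_h^\partial}$; by linearity the left-hand side becomes $\mathcal B_1(\bm q_h - \bm q_h(u), y_h - y_h(u), \widehat y_h^o - \widehat y_h^o(u); \bm r_1, w_1, \widehat w_1^o)$, which is exactly \eqref{Error_2_01}. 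For \eqref{Error_2_02} I would repeat the argument with a test triple $(\bm r_2, w_2, \widehat w_2^o)$, subtracting \eqref{Co2} from \eqref{HDG_full_discrete_b}: the right-hand sides $-(y_h - y_d, w_2)_{\mathcal T_h}$ and $-(y_h(u) - y_d, w_2)_{\mathcal T_h}$ differ only in the desired state, which cancels, leaving $-(y_h - y_h(u), w_2)_{\mathcal T_h}$. Since the original equations hold for all test functions in $[\bm V_h \times W_h \times M_h^o]^2$, so do the differences, and the proof is complete.

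I do not anticipate any genuine obstacle here: the result is a purely algebraic consequence of linearity together with the structure of the right-hand sides, requiring no inverse inequalities, projection estimates, or regularity hypotheses. The one point deserving a moment's care is to confirm that the dependence of the discrete system on $u_h$ enters only through the right-hand side of \eqref{HDG_full_discrete_a} (and not through $\mathcal B_1$ itself), which is immediate from the compact formulation \eqref{HDG_full_discrete}; then the cancellation of $f$ and $y_d$ and the recombination of the boundary terms are the only computations needed.
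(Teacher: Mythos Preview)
Your proposal is correct and is exactly the approach the paper takes: it simply states that \eqref{Error_2} follows from \eqref{HDG_full_discrete} and \eqref{Co}, i.e., by subtracting the two systems and using linearity of $\mathcal B_1$ and $\mathcal B_2$ in the first slot. Your write-up in fact spells out more detail than the paper itself provides.
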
	
\begin{lemma} \label{lemma_u_par}
	Let $\left(\bm q_h,y_h,\widehat{y}^o_h,\bm p_h,z_h,\widehat{z}^o_h\right)\in [\bm V_h\times W_h\times M^{o}_h]^2$ and
	$(\bm q_h(u),y_h(u), \\ \widehat{y}^o_h(u), \bm p_h(u),z_h(u),\widehat{z}^o_h(u))\in [\bm V_h\times W_h\times M^{o}_h]^2$ be
	the solutions of \eqref{HDG_full_discrete} and \eqref{Co}, respectively.  Then we have 
	\begin{align*}
	&\|y_h-y_h(u)\|^2_{\mathcal{T}_h}+\gamma\|u_h- \Pi^{\partial}_ku\|^2_{\mathcal E_h^\partial}\nonumber\\
	&\qquad\qquad=\langle u_h-\Pi^{\partial}_ku,\Pi^{\partial}_k(\bm{p}\cdot\bm{n})-\bm p_h(u)\cdot\bm{n}-\tau_2(z_h(u)-\widehat{z}_h^o(u)) \rangle_{\mathcal{E}_h^\partial}.
	\end{align*}
\end{lemma}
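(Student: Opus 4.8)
The plan is to establish this identity by an energy argument: test the two error equations in \eqref{Error_2} against each other's solution triples and invoke the algebraic symmetry of $\mathcal B_1$ and $\mathcal B_2$ recorded in \Cref{sys}. To lighten notation I write $(\delta\bm q,\delta y,\delta\widehat y^o)=(\bm q_h-\bm q_h(u),\,y_h-y_h(u),\,\widehat y^o_h-\widehat y^o_h(u))$ for the primal error triple, $(\delta\bm p,\delta z,\delta\widehat z^o)$ for the dual error triple defined in the same way, and $\delta u=u_h-\Pi^\partial_k u\in M^\partial_h$.

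The first step is to choose $(\bm r_1,w_1,\widehat w_1^o)=(\delta\bm p,\delta z,\delta\widehat z^o)$ in \eqref{Error_2_01} and $(\bm r_2,w_2,\widehat w_2^o)=(\delta\bm q,\delta y,\delta\widehat y^o)$ in \eqref{Error_2_02}. This gives $\mathcal B_1(\delta\bm q,\delta y,\delta\widehat y^o;\delta\bm p,\delta z,\delta\widehat z^o)=-\langle\delta u,\,\tau_2\delta z+\delta\bm p\cdot\bm n\rangle_{\mathcal E_h^\partial}$ from \eqref{Error_2_01}, and $\mathcal B_2(\delta\bm p,\delta z,\delta\widehat z^o;\delta\bm q,\delta y,\delta\widehat y^o)=-\|\delta y\|^2_{\mathcal T_h}$ from \eqref{Error_2_02}. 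By \Cref{sys} the two left-hand sides coincide, so $\|\delta y\|^2_{\mathcal T_h}=\langle\delta u,\,\tau_2\delta z+\delta\bm p\cdot\bm n\rangle_{\mathcal E_h^\partial}$, and it remains only to rewrite the boundary term using the optimality conditions.

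For that term I would use that functions in $M^o_h$ vanish on $\Gamma$, so $\widehat z^o_h=\widehat z^o_h(u)=0$ on $\mathcal E_h^\partial$, and hence there $\tau_2\delta z+\delta\bm p\cdot\bm n=\big(\bm p_h\cdot\bm n+\tau_2(z_h-\widehat z^o_h)\big)-\big(\bm p_h(u)\cdot\bm n+\tau_2(z_h(u)-\widehat z^o_h(u))\big)$. Since $\delta u\in M^\partial_h$ is an admissible test function in the discrete boundary optimality equation \eqref{6e} (equivalently \eqref{HDG_full_discrete_c}), pairing the first group with $\delta u$ turns it into $-\langle\gamma u_h,\delta u\rangle_{\mathcal E_h^\partial}$; writing $\gamma u_h=\gamma\delta u+\gamma\Pi^\partial_k u$ then produces the term $\gamma\|\delta u\|^2_{\mathcal E_h^\partial}$. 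For the remaining contribution $\gamma\langle\Pi^\partial_k u,\delta u\rangle_{\mathcal E_h^\partial}$ I would use the continuous boundary optimality relation \eqref{mixed_e1}, which gives $\gamma u=-\bm p\cdot\bm n$ a.e.\ on $\Gamma$ and hence $\gamma\Pi^\partial_k u=-\Pi^\partial_k(\bm p\cdot\bm n)$; this is legitimate because $r_{\bm p}\ge 1$ in \eqref{eqn:regularity2} ensures $\bm p\cdot\bm n$ has a well-defined $L^2(\Gamma)$ trace. Collecting all the terms and moving $\gamma\|\delta u\|^2_{\mathcal E_h^\partial}$ to the left-hand side produces precisely the stated identity.

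The argument is essentially routine once the cross test functions are identified; the one conceptual ingredient is that the symmetry in \Cref{sys} is exactly what forces the term $\|y_h-y_h(u)\|^2_{\mathcal T_h}$ to emerge rather than a mixed cross term. The only care needed is the bookkeeping of signs between \eqref{6e}, \eqref{mixed_e1}, and the projection $\Pi^\partial_k$, together with the (mild) regularity check that $\bm p\cdot\bm n\in L^2(\Gamma)$ so that $\Pi^\partial_k(\bm p\cdot\bm n)$ and the identity $\gamma\Pi^\partial_k u=-\Pi^\partial_k(\bm p\cdot\bm n)$ are meaningful.
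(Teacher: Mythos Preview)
Your proof is correct and follows essentially the same route as the paper: cross-test the two error equations \eqref{Error_2_01}--\eqref{Error_2_02} against each other's error triples, invoke the symmetry identity of \Cref{sys} to obtain $\|\delta y\|^2_{\mathcal T_h}=\langle\delta u,\,\delta\bm p\cdot\bm n+\tau_2\delta z\rangle_{\mathcal E_h^\partial}$, and then rewrite the boundary term using \eqref{6e}, \eqref{mixed_e1}, and $\widehat z^o_h=\widehat z^o_h(u)=0$ on $\Gamma$. The only cosmetic difference is that the paper first adds $\gamma\|\delta u\|^2_{\mathcal E_h^\partial}$ to both sides and then applies \eqref{6e}, whereas you apply \eqref{6e} first and then move the $\gamma\|\delta u\|^2_{\mathcal E_h^\partial}$ term across; the algebra is otherwise identical.
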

\begin{proof}
	Take $(\bm r_1,w_1,\widehat w_1^o)=(\bm p_h-\bm p_h(u),z_h-z_h(u),\widehat z^o_h-\widehat z^o_h(u))$ and
	$(\bm r_2,w_2,\widehat w_2^o)$ $=(\bm q_h-\bm q_h(u),y_h-y_h(u),\widehat y^o_h-\widehat y^o_h(u))$ in
	\eqref{Error_2_01} and \eqref{Error_2_02}, respectively, and use \Cref{sys} to get
	\begin{align*}
	&-\|y_h-y_h(u)\|^2_{\mathcal{T}_h}\nonumber\\
	&\qquad=\mathcal B_2(\bm p_h-\bm p_h(u),z_h-z_h(u),\widehat z^o_h-\widehat z^o_h(u);\bm q_h-\bm q_h(u),y_h-y_h(u),\widehat y^o_h-\widehat y^o_h(u))\nonumber\\
	&\qquad=\mathcal B_1(\bm q_h-\bm q_h(u),y_h-y_h(u),\widehat y^o_h-\widehat y^o_h(u);\bm p_h-\bm p_h(u),z_h-z_h(u),\widehat z^o_h-\widehat z^o_h(u))\nonumber\\
	&\qquad=-\langle u_h-\Pi^{\partial}_ku,( \bm p_h-\bm p_h(u))\cdot\bm{n}+\tau_2(z_h-z_h(u) ) \rangle_{\mathcal{E}_h^\partial}.
	\end{align*}
	Therefore, \eqref{6e}, \eqref{mixed} and $\widehat{z}^o_h=\widehat{z}^o_h(u)=0$ on $\mathcal E_h^\partial$ give
	\begin{align*}
	&\|y_h-y_h(u)\|^2_{\mathcal{T}_h}+\gamma\|u_h-\Pi^{\partial}_ku\|^2_{\mathcal E_h^\partial}\nonumber\\
	&\qquad=\langle u_h-\Pi^{\partial}_ku, \bm p_h\cdot\bm{n}+\tau_2z_h+\gamma u_h \rangle_{\mathcal{E}_h^\partial} \nonumber\\
	&\qquad\quad-\langle u_h-\Pi^{\partial}_ku,\bm p_h(u)\cdot\bm{n}+\tau_2z_h(u) +\gamma \Pi^{\partial}_ku \rangle_{\mathcal{E}_h^\partial} \nonumber\\
	&\qquad=\langle u_h-\Pi^{\partial}_ku,\Pi^{\partial}_k(\bm{p}\cdot\bm{n})-\bm p_h(u)\cdot\bm{n}-\tau_2(z_h(u)-\widehat{z}_h^o(u)) \rangle_{\mathcal{E}_h^\partial}. \nonumber
	\end{align*}
\end{proof}

\begin{lemma}\label{error_ee}
	Let $\left(\bm q_h,y_h,\widehat{y}^o_h,\bm p_h,z_h,\widehat{z}^o_h\right)\in [\bm V_h\times W_h\times M^{o}_h]^2$ and
	$(\bm q_h(u),y_h(u),$ $\widehat{y}^o_h(u),\bm p_h(u),z_h(u),\widehat{z}^o_h(u))\in [\bm V_h\times W_h\times M^{o}_h]^2$ be the solutions of \eqref{HDG_full_discrete} and \eqref{Co}, respectively.  If assumptions  \textbf{(A1)}-\textbf{(A3)} hold,  then there exists $h_0$, independent of $\varepsilon$ such that for all $h\le h_0$,  we have
	the estimates
	\begin{align*}
	\|u-u_h\|_{\mathcal E_h^\partial}&\le C
	\left(
	h^{s_{y}-1/2}\|y\|_{s_y}+ h^{s_{z}-1/2}\|z\|_{s_z}+\delta(s_y)\varepsilon^{1/2}h\|\Delta y\|_{\mathcal{T}_h}
	\right)
	,\\
	\|y_h-y_h(u)\|_{\mathcal{T}_h}&\le C
	\left(
	h^{s_{y}-1/2}\|y\|_{s_y}+ h^{s_{z}-1/2}\|z\|_{s_z}+\delta(s_y)\varepsilon^{1/2}h\|\Delta y\|_{\mathcal{T}_h}
	\right)
	,\\
	\|z_h-z_h(u)\|_{\mathcal{T}_h}&\le C\left(
	h^{s_{y}-1/2}\|y\|_{s_y}+ h^{s_{z}-1/2}\|z\|_{s_z}+\delta(s_y)\varepsilon^{1/2}h\|\Delta y\|_{\mathcal{T}_h}
	\right).
	%	,\\
	%		\|\bm q_h-\bm q_h(u)\|_{\mathcal{T}_h}&\le C
	%	\left(
	%	h^{s_{y}-1/2}\|y\|_{s_y}+ h^{s_{z}-1/2}\|z\|_{s_z}+\delta(s_y)\varepsilon^{1/2}h\|\Delta y\|_{\mathcal{T}_h}
	%	\right)
	%	,\\
	%	\|\bm p_h-\bm p_h(u)\|_{\mathcal{T}_h}&\le C\varepsilon^{1/2}\left(
	%	h^{s_{y}-1/2}\|y\|_{s_y}+ h^{s_{z}-1/2}\|z\|_{s_z}+\delta(s_y)\varepsilon^{1/2}h\|\Delta y\|_{\mathcal{T}_h}
	%	\right).
	\end{align*}
\end{lemma}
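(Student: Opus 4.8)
The plan is to reduce all three estimates to a single bound on the boundary quantity appearing in \Cref{lemma_u_par}. Writing
\[
\Psi:=\Pi^{\partial}_k(\bm p\cdot\bm n)-\bm p_h(u)\cdot\bm n-\tau_2(z_h(u)-\widehat z_h^o(u)),
\]
that lemma gives $\|y_h-y_h(u)\|^2_{\mathcal T_h}+\gamma\|u_h-\Pi^{\partial}_ku\|^2_{\mathcal E_h^\partial}=\langle u_h-\Pi^{\partial}_ku,\Psi\rangle_{\mathcal E_h^\partial}$, so a Cauchy--Schwarz inequality immediately yields $\|u_h-\Pi^{\partial}_ku\|_{\mathcal E_h^\partial}\le\gamma^{-1}\|\Psi\|_{\mathcal E_h^\partial}$ and $\|y_h-y_h(u)\|_{\mathcal T_h}\le\gamma^{-1/2}\|\Psi\|_{\mathcal E_h^\partial}$. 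The control estimate then follows from $\|u-u_h\|_{\mathcal E_h^\partial}\le\|u-\Pi^{\partial}_ku\|_{\mathcal E_h^\partial}+\|\Pi^{\partial}_ku-u_h\|_{\mathcal E_h^\partial}$, where the first term is controlled by the projection estimate \eqref{stab_6} after using the optimality relation $\gamma u+\bm p\cdot\bm n=0$ on $\Gamma$, which gives $u-\Pi^{\partial}_ku=-\gamma^{-1}(\mathbb I-\Pi^{\partial}_k)(\bm p\cdot\bm n)$. For $\|z_h-z_h(u)\|_{\mathcal T_h}$ I would apply the strong stability bound \eqref{stab-3} of \Cref{LBB} to the error equation \eqref{Error_2_02}: its right-hand side is $-(y_h-y_h(u),w_2)_{\mathcal T_h}$, which is bounded by $C\|y_h-y_h(u)\|_{\mathcal T_h}\norm{(\bm r_2,w_2,\widehat w_2^o)}$ since \textbf{(A1)} and $\beta_0>0$ give $\|w_2\|_{\mathcal T_h}\le\beta_0^{-1/2}\norm{(\bm r_2,w_2,\widehat w_2^o)}$; hence $\|z_h-z_h(u)\|_{\mathcal T_h}\le C\|y_h-y_h(u)\|_{\mathcal T_h}\le C\|\Psi\|_{\mathcal E_h^\partial}$.

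It therefore remains to bound $\|\Psi\|_{\mathcal E_h^\partial}$. Since $\widehat z_h^o(u)|_{\mathcal E_h^\partial}=0$, $\widetilde\Pi^{\partial}_k z|_{\mathcal E_h^\partial}=0$, $z|_\Gamma=0$ and $\Pi^{\partial}_k(z|_E)=0$ for $E\subset\Gamma$, on $\mathcal E_h^\partial$ I would rewrite
\[
\Psi=\Pi^{\partial}_k\big((\bm p-\bm\Pi^o_k\bm p)\cdot\bm n\big)+(\bm\Pi^o_k\bm p-\bm p_h(u))\cdot\bm n+\tau_2\big((\Pi^o_k z-z_h(u))-(\widetilde\Pi^{\partial}_k z-\widehat z_h^o(u))\big)-\tau_2(\Pi^o_k z-z).
\]
The first and last terms are pure projection errors: using the $L^2$-stability of $\Pi^{\partial}_k$, the trace estimate \eqref{stab_6}, the bound $\tau_2\le C$ (a consequence of \textbf{(A3)}, since then $\varepsilon h_T^{-1}<1$), together with $\bm p=-\varepsilon\nabla z$ and $\varepsilon<h_T$, each is bounded by $Ch^{s_z-1/2}\|z\|_{s_z}$. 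For the third term I would use $\tau_2\le 2\tau$ to bound it by $C\norm{\tau^{1/2}\big((\Pi^o_k z-z_h(u))-(\widetilde\Pi^{\partial}_k z-\widehat z_h^o(u))\big)}_{\partial\mathcal T_h}$, which is one summand of the triple norm $\norm{(\bm\Pi^o_k\bm p-\bm p_h(u),\Pi^o_k z-z_h(u),\widetilde\Pi^{\partial}_k z-\widehat z_h^o(u))}$ already estimated in \eqref{par_z} by $C\big(h^{s_y-1/2}\|y\|_{s_y}+h^{s_z-1/2}\|z\|_{s_z}+\delta(s_y)\varepsilon^{1/2}h\|\Delta y\|_{\mathcal T_h}\big)$.

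The remaining, and most delicate, term is $(\bm\Pi^o_k\bm p-\bm p_h(u))\cdot\bm n$, because \eqref{par_z} only controls $\bm\Pi^o_k\bm p-\bm p_h(u)$ in the volume $L^2$ norm weighted by $\varepsilon^{-1/2}$, whereas here we need its $L^2(\Gamma)$ trace. I expect this to be the main obstacle. The key idea is to use a \emph{local} trace/inverse inequality on each element $T$ with a face on $\Gamma$, namely $\|(\bm\Pi^o_k\bm p-\bm p_h(u))\cdot\bm n\|_{\partial T}\le Ch_T^{-1/2}\|\bm\Pi^o_k\bm p-\bm p_h(u)\|_{T}$, and then to invoke \textbf{(A3)} in the form $\varepsilon h_T^{-1}<1$ to absorb the mismatch between the weight $h_T^{-1/2}$ and the weight $\varepsilon^{-1/2}$ carried by the triple norm:
\[
\sum_{T:\,\partial T\cap\Gamma\neq\emptyset}h_T^{-1}\|\bm\Pi^o_k\bm p-\bm p_h(u)\|_{T}^2\le\Big(\max_{T\in\mathcal T_h}\varepsilon h_T^{-1}\Big)\,\varepsilon^{-1}\|\bm\Pi^o_k\bm p-\bm p_h(u)\|_{\mathcal T_h}^2\le\norm{(\bm\Pi^o_k\bm p-\bm p_h(u),\Pi^o_k z-z_h(u),\widetilde\Pi^{\partial}_k z-\widehat z_h^o(u))}^2,
\]
so this term is again controlled by \eqref{par_z}. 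Collecting the four contributions gives $\|\Psi\|_{\mathcal E_h^\partial}\le C\big(h^{s_y-1/2}\|y\|_{s_y}+h^{s_z-1/2}\|z\|_{s_z}+\delta(s_y)\varepsilon^{1/2}h\|\Delta y\|_{\mathcal T_h}\big)$, which combined with the first paragraph yields all three estimates. Besides this boundary flux term — which is exactly where \textbf{(A3)} lets us avoid the global inverse inequality and hence the quasi-uniformity assumption used in all previous Dirichlet boundary control works — the only other point requiring care is bookkeeping all powers of $\varepsilon$ so that the final constants are genuinely $\varepsilon$-independent.
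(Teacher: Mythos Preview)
Your proposal is correct and follows essentially the same route as the paper: reduce to bounding $\Psi$ via \Cref{lemma_u_par} and Cauchy--Schwarz, split $\Psi$ into projection errors plus the triple-norm quantity controlled by \eqref{par_z}, handle the boundary flux piece $(\bm\Pi^o_k\bm p-\bm p_h(u))\cdot\bm n$ by a local inverse inequality together with \textbf{(A3)} (so $h_T^{-1/2}\le\varepsilon^{-1/2}$), and finally get $\|z_h-z_h(u)\|_{\mathcal T_h}$ from \Cref{LBB} applied to \eqref{Error_2_02}. The only noticeable deviations are cosmetic: you bound $\|u-\Pi^\partial_k u\|_{\mathcal E_h^\partial}$ via the optimality relation $u=-\gamma^{-1}\bm p\cdot\bm n$ (giving an $h^{s_z-1/2}\|z\|_{s_z}$ contribution), whereas the paper uses $u=y|_\Gamma$ (giving $h^{s_y-1/2}\|y\|_{s_y}$); and you invoke the triple-norm bound \eqref{par_z} directly on $\varepsilon^{-1/2}\|\bm\Pi^o_k\bm p-\bm p_h(u)\|_{\mathcal T_h}$, whereas the paper detours through $\|\bm p-\bm p_h(u)\|_{\mathcal T_h}$ from \Cref{theorem4.4}---both are equivalent. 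One small point: when you write ``use $\tau_2\le 2\tau$ to bound it by $C\|\tau^{1/2}(\cdots)\|$'', you also need $\tau_2\le C$ (which you already noted from \textbf{(A3)}) so that $\tau_2\le\tau_2^{1/2}\cdot C^{1/2}\le C'\tau^{1/2}$.
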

\begin{proof} By \Cref{lemma_u_par}, the Cauchy-Schwarz inequality, and Young's inequality one gets
	\begin{align*}
	\hspace{1em}&\hspace{-1em}\|y_h-y_h(u)\|_{\mathcal{T}_h}+\gamma^{1/2}\|u_h- \Pi^{\partial}_ku\|_{\mathcal E_h^\partial}\\
	&\qquad\le C\|\Pi^{\partial}_k(\bm{p}\cdot\bm{n})-\bm p_h(u)\cdot\bm{n}\|_{\mathcal{E}_h^{\partial}}+C\|\tau_2(z_h(u)-\widehat{z}_h^o(u)\|_{\mathcal{E}_h^{\partial}}.
	\end{align*}
	By the  triangle inequality,  $\bm p=-\varepsilon \nabla z$, an  inverse inequality and the estimates in \Cref{error_ee} we have
	\begin{align*}
	\hspace{1em}&\hspace{-1em}\|\bm{\Pi}^{\partial}_k(\bm{p}\cdot\bm{n})-\bm p_h(u)\cdot\bm{n}\|_{\mathcal{E}_h^{\partial}}\nonumber\\
	&\le
	\|\bm{\Pi}^{\partial}_k(\bm{p}\cdot\bm{n})-\bm{\Pi}^{o}_k(\bm{p}\cdot\bm{n})\|_{\mathcal{E}_h^{\partial}}+\|\bm{\Pi}^{o}_k(\bm{p}\cdot\bm{n})-\bm p_h(u)\cdot\bm{n}\|_{\mathcal{E}_h^{\partial}} \nonumber\\
	&\qquad\le C \varepsilon h^{s_{z}-3/2}\|z\|_{s_z}+\sum_{T\in\mathcal{T}_h,\partial T\bigcap \mathcal{E}_h^{\partial}\neq\emptyset}
	h_T^{-1/2}\|\bm \Pi^{o}_k\bm{p}-\bm p_h(u)\|_{T} \nonumber\\
	&\qquad\le C\varepsilon h^{s_{z}-3/2}\|z\|_{s_z}
	+\sum_{T\in\mathcal{T}_h,\partial T\bigcap \mathcal{E}_h^{\partial}\neq\emptyset}
	h_T^{-1/2}
	\left(
	\|\bm \Pi^{o}_k\bm{p}-\bm p\|_{T}
	+\|\bm{p}-\bm p_h(u)\|_{T}
	\right) \nonumber\\
	&\qquad\le  Ch^{s_{z}-1/2}\|z\|_{s_z}
	+\varepsilon^{-1/2}\sum_{T\in\mathcal{T}_h,\partial T\bigcap \mathcal{E}_h^{\partial}\neq\emptyset}
	\|\bm{p}-\bm p_h(u)\|_{T} \nonumber\\
	&\qquad\le  Ch^{s_{z}-1/2}\|z\|_{s_z}
	+\varepsilon^{-1/2}
	\|\bm{p}-\bm p_h(u)\|_{\mathcal{T}_h} \nonumber\\
	&\qquad\le C
	\left(
	h^{s_y-1/2}\|y\|_{s_y}+h^{s_z-1/2}\|z\|_{s_z}+\delta(s_y)\varepsilon^{1/2}h\|\Delta y\|_{\mathcal{T}_h}
	\right) .
	\end{align*}
	By the  triangle inequality, $z=0$ on $\Gamma$,  and the estimate \eqref{par_z} we have 
	\begin{align*}	
	\hspace{1em}&\hspace{-1em}\|\tau_2(z_h(u)-\widehat{z}_h^o(u)\|_{\mathcal{E}_h^{\partial}}\nonumber\\
	&\le C \|(z_h(u)-\widehat{z}_h^o(u)-\Pi^o_kz+\widetilde{\Pi}_k^{\partial}z)\|_{\mathcal{E}_h^{\partial}}
	+ C\|(\Pi^o_kz-\widetilde{\Pi}_k^{\partial}z)\|_{\mathcal{E}_h^{\partial}} \nonumber\\
	&\le C\|(\bm{\Pi}^o_k\bm{p}-\bm{p}_h(u),\Pi^{o}_kz-z_h(u),\widetilde{\Pi}^{\partial}_kz-\widehat{z}_h^o(u))\|+Ch^{s_z-1/2}\|z\|_{s_z}\nonumber\\
	&\le  C\left( h^{s_y-1/2}\|y\|_{s_y}+h^{s_z-1/2}\|z\|_{s_z}+\delta(s_y)\varepsilon^{1/2}h\|\Delta y\|_{\mathcal{T}_h}\right).
	\end{align*}	
	This implies
	\begin{align*}
	\hspace{1em}&\hspace{-1em}\|y_h-y_h(u)\|_{\mathcal{T}_h}
	+\|u_h- \Pi^{\partial}_ku\|_{\mathcal E_h^\partial}\nonumber\\
	&\le C\left( h^{s_{y}-1/2}\|y\|_{s_y}+
	h^{s_{z}-1/2}\|z\|_{s_z}+\delta(s_y)\varepsilon^{1/2}h\|\Delta y\|_{\mathcal{T}_h}\right).
	\end{align*}
	By the triangle inequality and  the fact $y=u$ on $\mathcal E_h^\partial$, we get 
	\begin{align*}
	\|u-u_h\|_{\mathcal E_h^\partial}&\le
	\|y-\Pi_k^{\partial }y\|_{\mathcal E_h^\partial}+\|\Pi_k^{\partial }u-u_h\|_{\mathcal E_h^\partial}\nonumber\\
	&\le C
	\left(
	h^{s_{y}-1/2}\|y\|_{s_y}+ h^{s_{z}-1/2}\|z\|_{s_z}+\delta(s_y)\varepsilon^{1/2}h\|\Delta y\|_{\mathcal{T}_h}
	\right).
	\end{align*}
	%	By \Cref{LBB}, \eqref{Error_2_01}, and inverse inequality, one has
	%	\begin{align}
	%	\hspace{1em}&\hspace{-1em}\|(\bm{q}_h(u)-\bm{q}_h,y_h(u)-y_h,\widehat{y}_h^o(u)-\widehat{y}_h^o)\| \nonumber\\
	%	%
	%	&\le C\sup_{(\bm r_1,w_1,\widehat w_1^o)\in \bm{V}_h\times W_h\times  M_h^o}\frac{\mathcal B_1(\bm{q}_h(u)-\bm{q}_h,y_h(u)-y_h,\widehat{y}_h^o(u)-\widehat{y}_h^o;\bm r_1,w_1,\widehat w_1^o)}{\norm{(\bm r_1,w_1,\widehat w_1^o)} }\nonumber\\
	%	%
	%	&=\sup_{(\bm r_1,w_1,\widehat w_1^o)\in \bm{V}_h\times W_h\times  M_h^o}\frac{\langle u_h-\Pi^{\partial}_ku,\tau_2w_1+\bm r_1\cdot\bm{n} \rangle_{\mathcal{E}_h^\partial}}{\norm{(\bm r_1,w_1,\widehat w_1^o)} }\nonumber\\
	%%
	%	&\le C\max_{T\in\mathcal{T}_h,\partial T\bigcap \mathcal{E}_h^{\partial}\neq\emptyset}h_T^{-1/2}\|u_h-\Pi^{\partial}_ku\|_{\mathcal{E}_h^{\partial}}\nonumber
	%	\end{align} 
	%	We use the fact $\varepsilon<\min_{T\in\mathcal{T}_h} h_T$,
	%	 it holds
	%	\begin{align}
	%	\|\bm q_h(u)-\bm q_h\|_{\mathcal{T}_h}&\le
	%	\varepsilon^{1/2}\|(\bm{q}_h(u)-\bm{q}_h,y_h(u)-y_h,\widehat{y}_h^o(u)-\widehat{y}_h^o)\| \nonumber\\
	%	&\le  C\|u_h-\Pi^{\partial}_ku\|_{\Gamma}\nonumber\\
	%	&\le C
	%	\left(
	%	h^{s_y-1/2}\|y\|_{s_y}+h^{s_z-1/2}\|z\|_{s_z}+\delta(s_y)\varepsilon^{1/2}h\|\Delta y\|_{\mathcal{T}_h}
	%	\right)  .\nonumber
	%	\end{align}
	By \Cref{LBB} and \eqref{Error_2_02}, one has
	\begin{align}
	\hspace{1em}&\hspace{-1em}\|(\bm{p}_h(u)-\bm{p}_h,z_h(u)-z_h,\widehat{z}_h^o(u)-\widehat{z}_h^o)\| \nonumber\\
	&\le C
	\sup_{(\bm{r}_2,w_2,\widehat w_2^o)\in \bm{V}_h\times W_h\times M_h^o}\frac{\mathcal B_2(\bm{p}_h(u)-\bm{p}_h,z_h(u)-z_h,\widehat{z}_h^o(u)-\widehat{z}_h^o;\bm{r}_2,w_2,\widehat w_2^o)}{\norm{(\bm{r}_2,w_2,\widehat w_2^o)} }\nonumber\\
	&\le C
	\sup_{(\bm{r}_2,w_2,\widehat w_2^o)\in \bm{V}_h\times W_h\times M_h^o}\frac{(y_h-y_h(u),w_2)_{\mathcal{T}_h}
	}{\norm{(\bm{r}_2,w_2,\widehat w_2^o)} }\nonumber\\
	&\le C \| y_h-y_h(u)\|_{\mathcal{T}_h} \nonumber\\
	&\le  C
	\left(h^{s_y-1/2}\|y\|_{s_y}+h^{s_z-1/2}\|z\|_{s_z}+\delta(s_y)\varepsilon^{1/2}h\|\Delta y\|_{\mathcal{T}_h}\right).\nonumber
	\end{align}
	Therefore,
	\begin{align*}
	\|z_h(u)-z_h\|_{\mathcal{T}_h}
	\le C\varepsilon^{1/2}\left(
	h^{s_y-1/2}\|y\|_{s_y}+h^{s_z-1/2}\|z\|_{s_z}+\delta(s_y)\varepsilon^{1/2}h\|\Delta y\|_{\mathcal{T}_h}
	\right) .
	\end{align*}
\end{proof}

\section{Numerical Experiments}
\label{sec:numerics}
In this  section, we report numerical experiments to illustrate our theoretical results.  For all experiments, we take $\Omega = [0,1]\times [0,1] \subset \mathbb{R}^2$,  $\gamma = 1$,  and the stabilization functions are chosen as in \eqref{def_tau1}-\eqref{def_tau2}.

\subsection{Smooth test}
In our first test, the state, dual state, and convection coefficient are chosen as
\begin{gather*}
y=-\varepsilon\pi(\sin(\pi x_1)+\sin(\pi x_2)), \  z =  \sin(\pi x_1)\sin(\pi x_2), \\
\bm \beta =-[x_1^2\sin(x_2),\cos(x_1) e^{x_2}],
\end{gather*}
and the source term $f$ and the desired state $y_d$ are generated using the optimality system \eqref{eq_adeq} with the above data. We show the numerical results for $ k = 1 $ and $\varepsilon=10^{-7}$ in \Cref{table_1}.

\begin{table}%[!hbp]
	\begin{center}
		\begin{tabular}{cccccc}
			\hline
			$h/\sqrt{2}$ &$2^{-1}$& $2^{-2}$&$2^{-3}$&$2^{-4}$ & $2^{-5}$ \\
			\hline
			$\norm{{y}-{y}_h}_{0,\Omega}$&6.0299E-02   &1.3188E-02   &2.1788E-03   &4.8975E-04   &1.1863E-04 \\
			\hline
			order&-& 2.1929   &2.5976   &2.1534   &2.0456 \\
			\hline
			$\norm{{z}-{z}_h}_{0,\Omega}$&   1.0572E-01  & 2.6724E-02   &6.2451E-03   &1.5091E-03   &3.7092E-04\\
			\hline
			order&-&  1.9841   &2.0973   &2.0491   &2.0245\\
			\hline
			$\norm{{u}-{u}_h}_{0,\Gamma}$&2.5537E-01  & 5.6029E-02   &1.2108E-02   &2.8176E-03   &6.7424E-04 \\
			\hline
			order&-&  2.1883   &2.2102   &2.1034   &2.0631\\
			\hline
		\end{tabular}
	\end{center}
	\caption{Smooth test with $k=1$ and $\varepsilon = 10^{-7}$: Errors for the control $u$, state $y$ and the  adjoint state $z$.}\label{table_1}
\end{table}

\subsection{Non-smooth test}
Next, we choose the data as 
\begin{align*}
y_d = x(1-x)y(1-y), \ \ f=0, \  \mbox{and} \  \ \   \bm \beta =-[x_1^2\sin(x_2),\cos(x_1) e^{x_2}].
\end{align*}
We tested $5$ cases with different values for $\varepsilon$ and we do not have exact solutions for these problems;  we solved the problems numerically for a triangulation with approximately  $1.5$ million elements and compared these reference solutions against other solutions computed on meshes with larger $h$. The numerical results are shown in \Cref{table_2}; the computed convergence rates are erratic and do not follow a clear pattern.  The same phenomenon has been observed in another work on a convection dominated Dirichlet boundary control problem \cite{PeterBenner}. Also, we plot the state, dual state and boundary control in \Cref{fig:u1,fig:u2,fig:u3,fig:u4,fig:u5}. Furthermore, many works on convection dominated PDEs observe well-behaved convergence if they remove a small portion of the domain containing  the layer; see \cite[Section 6]{MR2595051} for a convection dominated distributed optimal control problem and \cite[Table 4 in Section 5.4]{MR3342199} for a convection dominated PDEs. We did not  to compute the rates by removing the layer since the layer is always on the boundary; see   
\Cref{fig:u1,fig:u2,fig:u3,fig:u4,fig:u5}.

\begin{table}
	%%%%%%%%%%%%%%%%%%%%%%%%%%%%%%%55
	%\subtable [Errors for $\bm{q}_1$ and $u_1$]{
	\begin{tabular}{c|c|c|c|c|c|c|c}
		\Xhline{0.1pt}
		
		\multirow{2}{*}{$\varepsilon$}
		&\multirow{2}{*}{$\frac{h}{\sqrt{2}}$}	
		&\multicolumn{2}{c|}{$\| y - y_h\|_{\mathcal T_h}$}	
		&\multicolumn{2}{c|}{$\| z - z_h\|_{\mathcal T_h}$}	
		&\multicolumn{2}{c}{$\| u- u_h\|_{\varepsilon_h^\partial}$}	\\
		\cline{3-8}
		& &Error &Rate
		&Error &Rate
		&Error &Rate
		\\
		\cline{1-8}
		\multirow{5}{*}{ $1/10$}
		&$2^{-1}$	&4.8530E-04 	    &	       &1.0783E-03	    &	         &2.0808E-03&\\
		&$2^{-2}$	&1.6439E-04     &1.56	&5.3340E-04     &1.01	   &7.2793E-04   	&1.51\\
		&$2^{-3}$	&5.5062E-05	&1.58	   & 2.0652E-04 	 &1.37   &2.6827E-04&1.44\\
		&$2^{-4}$	&1.5885E-05 	&1.79 	&  6.4955E-05	 &1.67	   &9.1834E-05 	&1.54\\
		&$2^{-5}$	&4.1123E-06     &1.95	&   1.7687E-05     &1.88 	   &2.7451E-05  	&1.74\\

		\cline{1-8}
		\multirow{5}{*}{$1/50$}
		&$2^{-1}$	& 1.3406E-03 & 	       & 2.2114E-03 & 	      & 4.1984E-03        &\\
		&$2^{-2}$	&4.9404E-04&1.44	&1.3080E-03 &0.75	&1.8165E-03	&1.21\\
		&$2^{-3}$	& 2.4238E-04	&1.02	&7.3661E-04&0.83	& 8.9345E-04 &1.02\\
		&$2^{-4}$	& 1.1777E-04	&1.04	& 3.5878E-04	&1.04	& 5.2980E-04 &0.75\\
		&$2^{-5}$	& 4.1387E-05   &1.50	&1.4389E-04   	&1.32	& 2.6837E-04	&0.98\\

		\cline{1-8}
		\multirow{5}{*}{$1/100$}
		&$2^{-1}$	&1.6216E-03  & 	       & 2.5742E-03& 	      & 4.7465E-03       &\\
		&$2^{-2}$	&6.6643E-04&1.28	&1.5857E-03&0.70	&2.1124E-03	&1.17\\
		&$2^{-3}$	& 2.8129E-04	&1.24	&9.2685E-04 &0.77	&1.0435E-03  &1.02\\
		&$2^{-4}$	&  1.7254E-04	&0.71	&5.2512E-04	&0.82	&7.3691E-04&0.50\\
		&$2^{-5}$	& 8.5868E-05   &1.00	&2.5477E-04  &1.04	&  5.0212E-04   	&0.53\\

		\cline{1-8}
		\multirow{5}{*}{$1/1000$}
		&$2^{-1}$	&1.8500E-03 & 	       &  2.9440E-03& 	      &  5.3059E-03     &\\
		&$2^{-2}$	&8.3498E-04 &1.14	&2.0237E-03&0.54	&2.4554E-03	&1.11\\
		&$2^{-3}$	& 3.9689E-04	&1.07	&1.3617E-03&0.57	&1.0865E-03  &1.17\\
		&$2^{-4}$	&2.7140E-04	&0.55	& 9.1122E-04 	&0.58	& 5.2906E-04 &1.03\\
		&$2^{-5}$	& 1.5180E-04&0.84	&5.8115E-04&0.65	&  4.5760E-04	&0.21\\

		\cline{1-8}
		\multirow{5}{*}{$1/10000$}
		&$2^{-1}$	&1.9431E-03 & 	       &  3.0087E-03& 	      & 5.3675E-03   &\\
		&$2^{-2}$	&8.4727E-04&1.20	&2.0778E-03&0.53	&2.5082E-03	&1.10\\
		&$2^{-3}$	& 3.9498E-04 	&1.10	&1.4261E-03	&0.54	&1.1393E-03    &1.14\\
		&$2^{-4}$	& 2.3736E-04	&0.73	& 9.9958E-04 	&0.51	&5.6195E-04&1.01\\
		&$2^{-5}$	& 1.5931E-04   &0.58	&6.9841E-04 	&0.52	& 3.2155E-04	&0.81\\

		\Xhline {0.1 pt}
	\end{tabular}
	\caption{Non-smooth test with different $\varepsilon$ and $k=1$: Errors for the control $u$, state $y$ and the  adjoint state $z$.}\label{table_2}
\end{table}

\begin{figure}
	\centerline{
		\hbox{\includegraphics[width=2.5in]{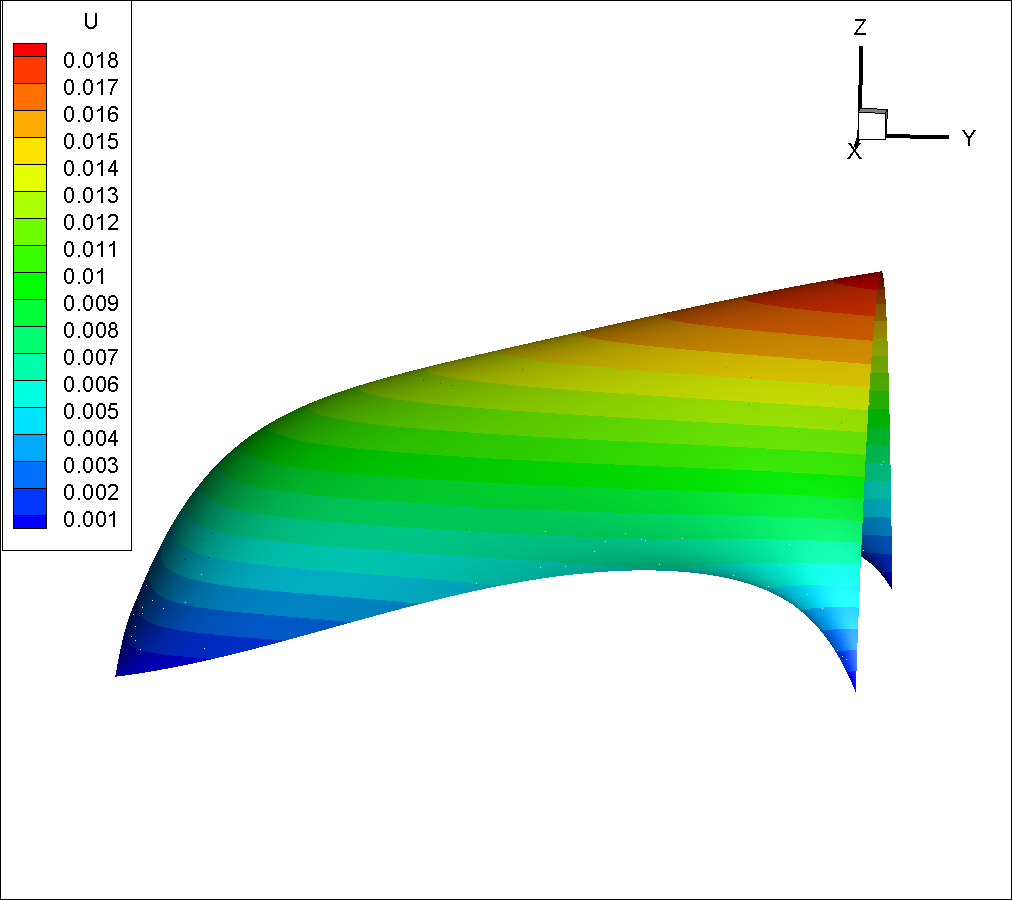}}
		\hbox{\includegraphics[width=2.5in]{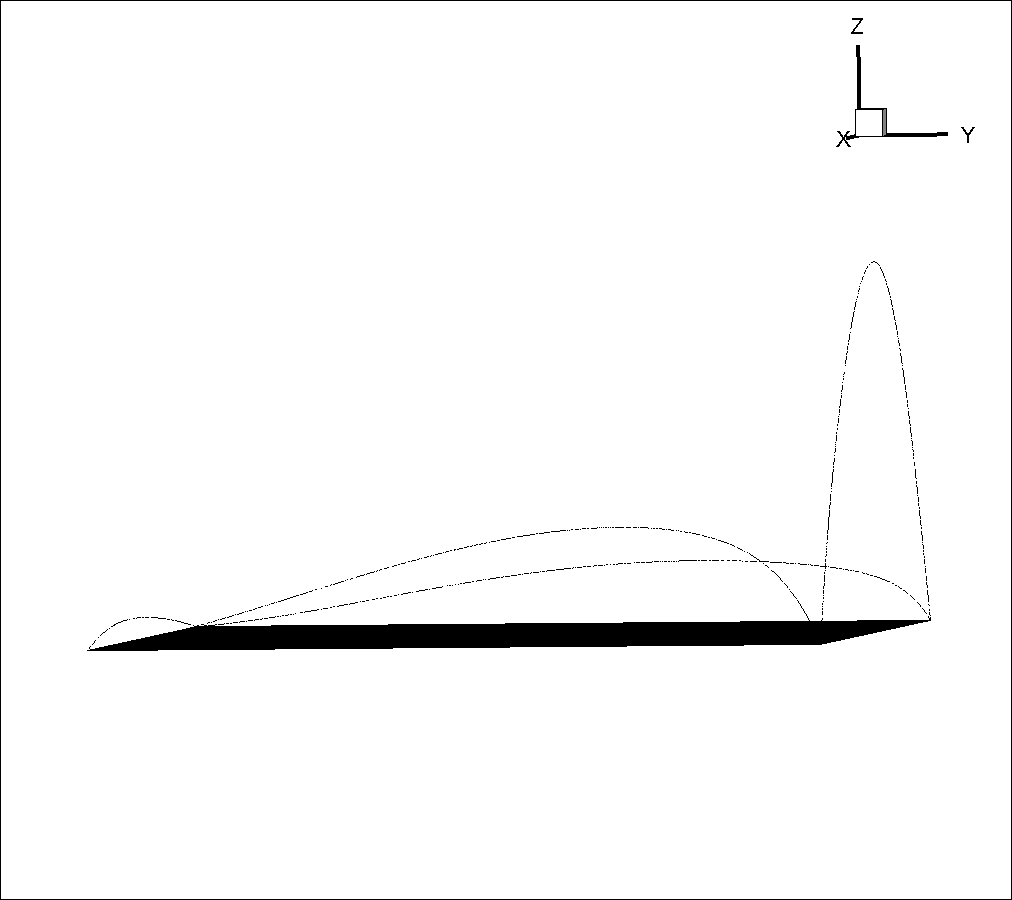}}
	}
	\caption{\label{fig:u1} Left is the state  $y_h$ and right is the control  $u_h$ for $\varepsilon=1/10$.}
\end{figure}
\begin{figure}
	\centerline{
		\hbox{\includegraphics[width=2.5in]{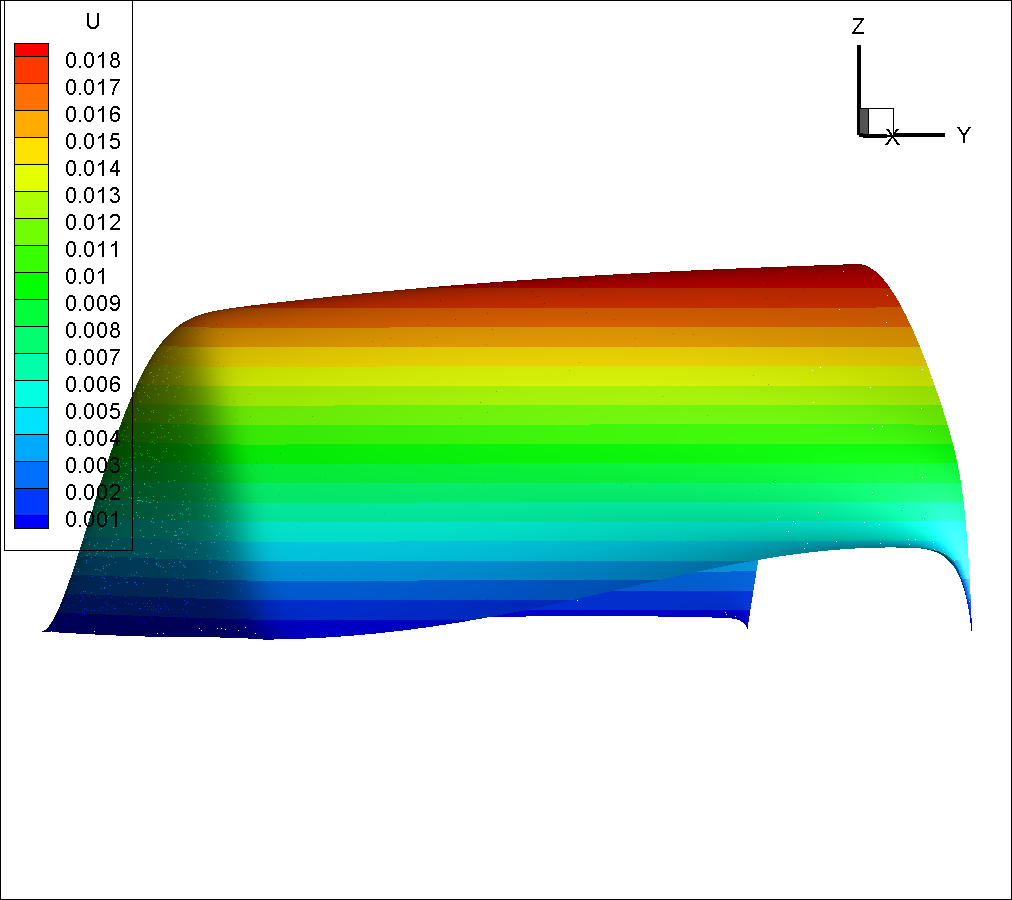}}
		\hbox{\includegraphics[width=2.5in]{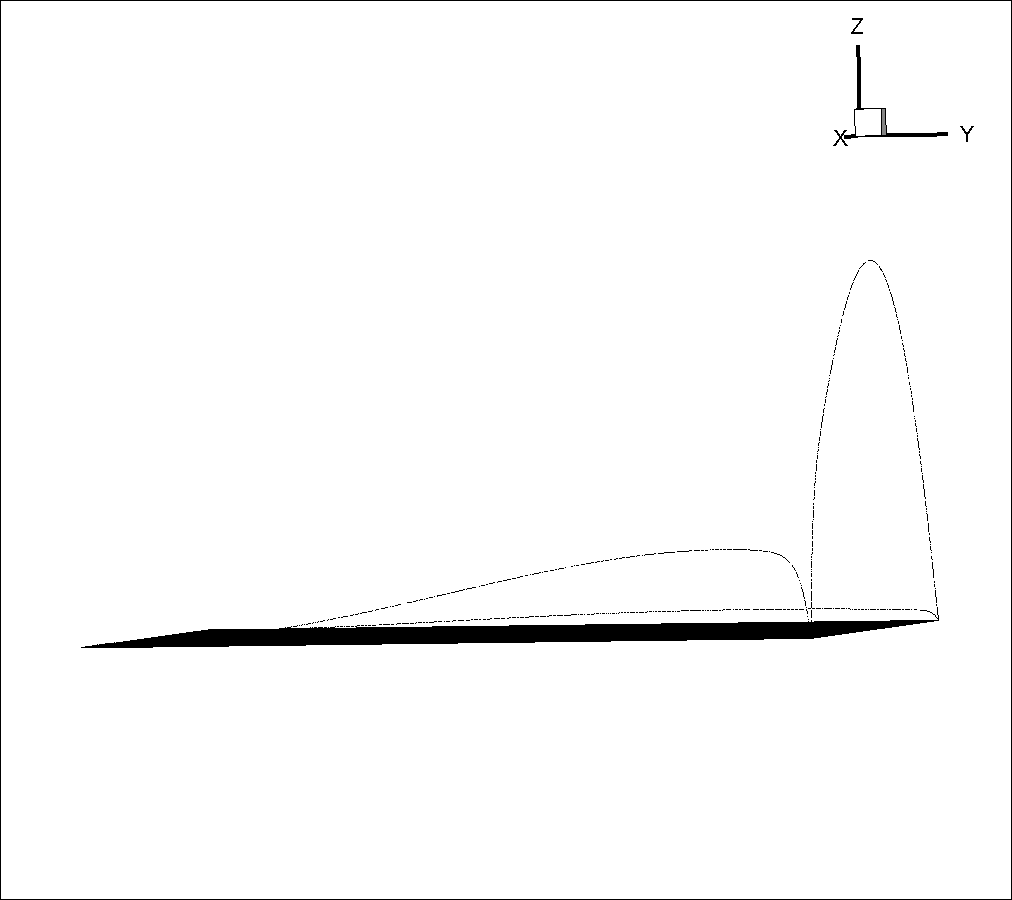}}
	}
	
	\caption{\label{fig:u2} Left is the state  $y_h$ and right is the control  $u_h$ for $\varepsilon=1/50$.}
\end{figure}
\begin{figure}
	\centerline{
		\hbox{\includegraphics[width=2.5in]{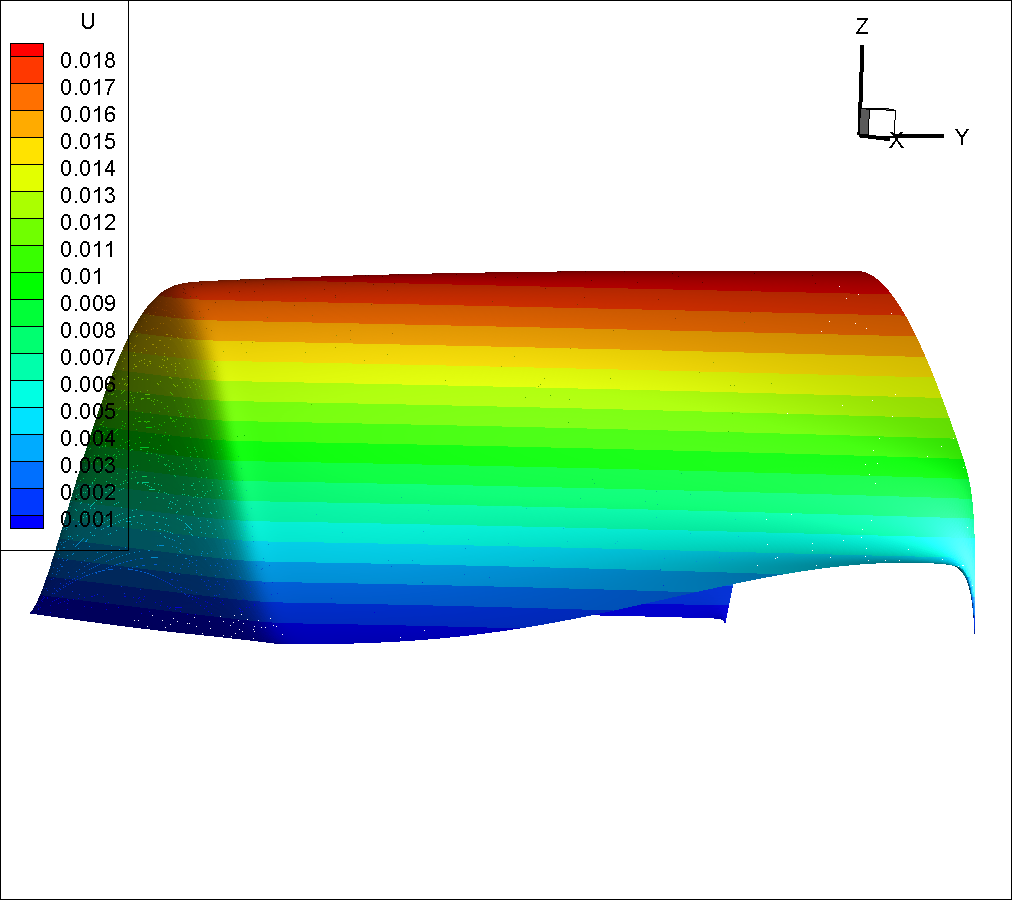}}
		\hbox{\includegraphics[width=2.5in]{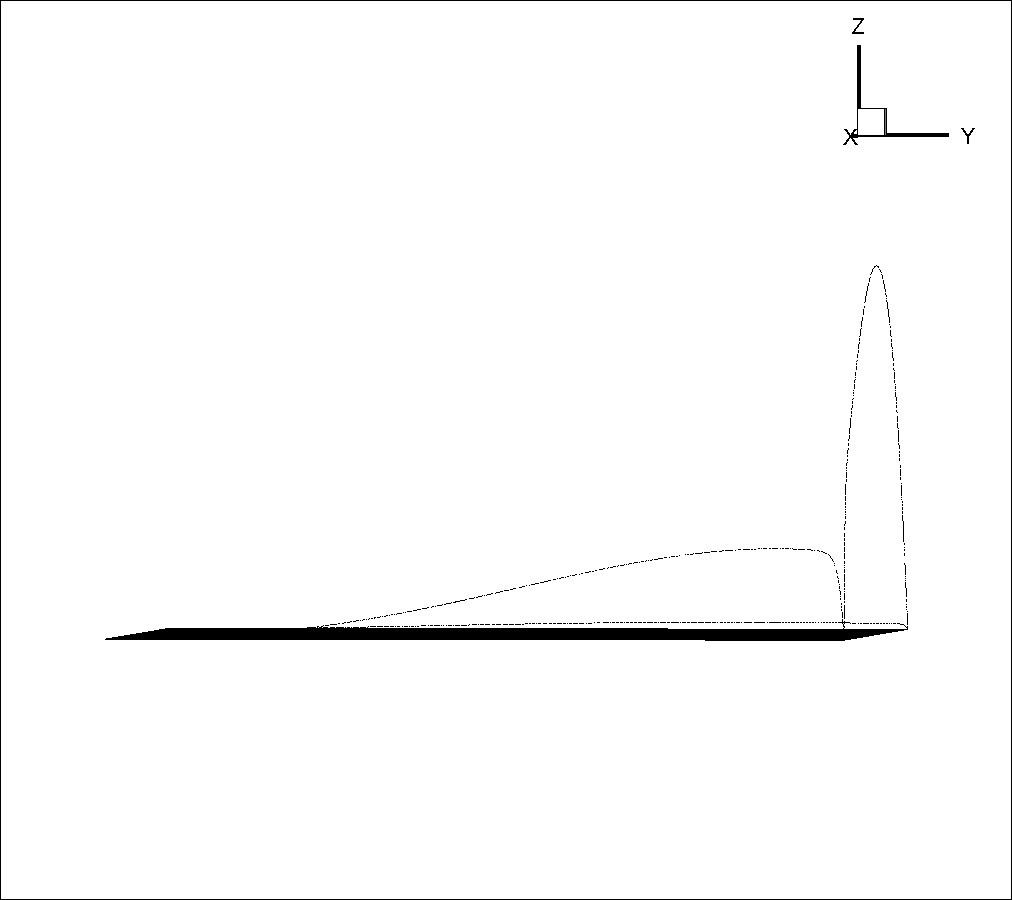}}
	}
	\caption{\label{fig:u3} Left is the state  $y_h$ and right is the control  $u_h$ for $\varepsilon=1/100$.}
\end{figure}

\begin{figure}
	\centerline{
		\hbox{\includegraphics[width=2.5in]{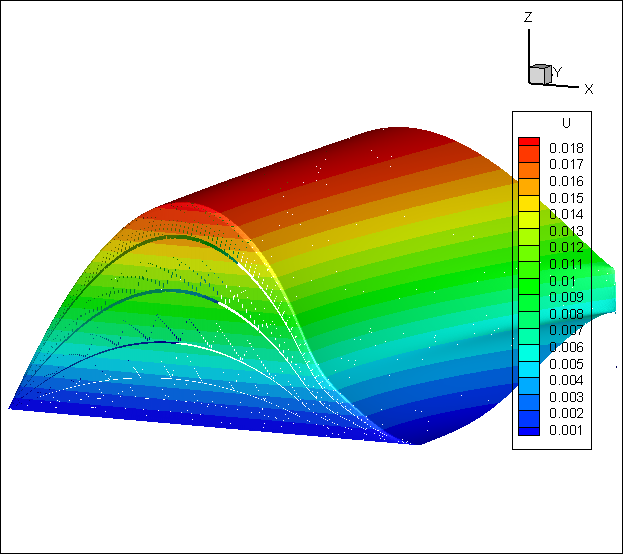}}
		\hbox{\includegraphics[width=2.5in]{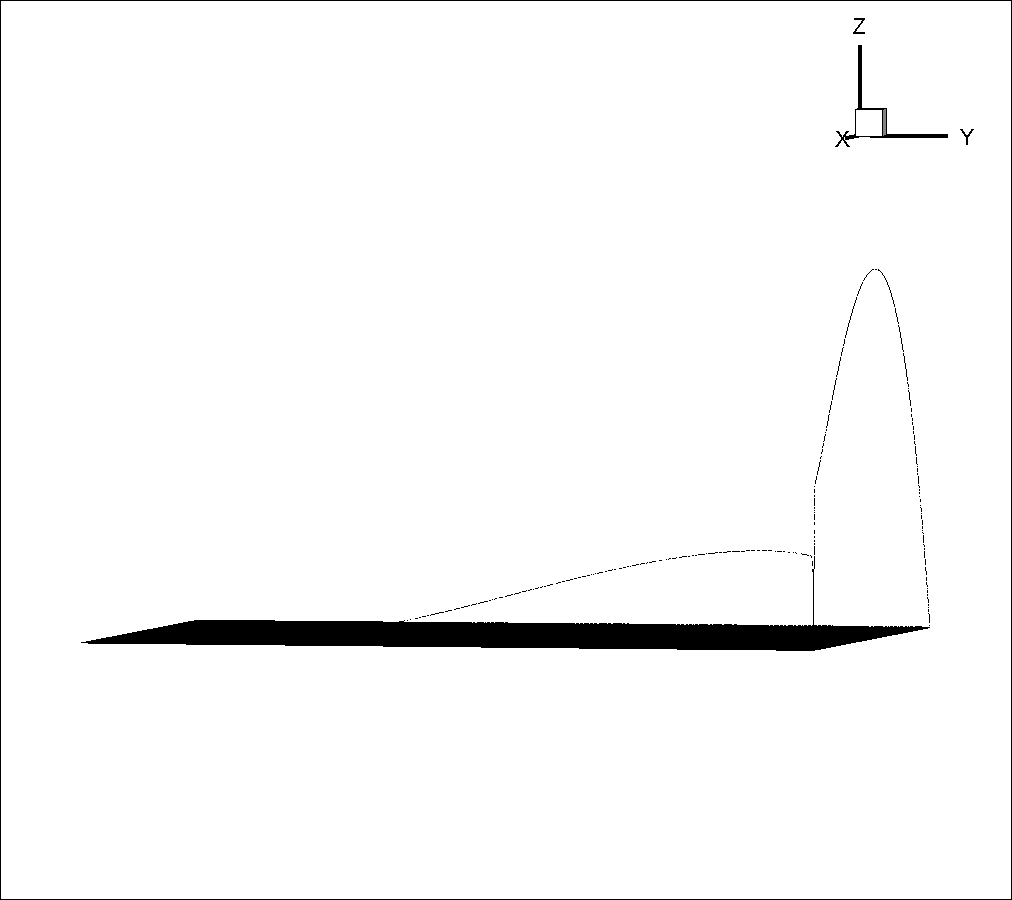}}
	}
	\caption{\label{fig:u4} Left is the state  $y_h$ and right is the control  $u_h$ for $\varepsilon=1/1000$.}
\end{figure}

\begin{figure}
	\centerline{
		\hbox{\includegraphics[width=2.5in]{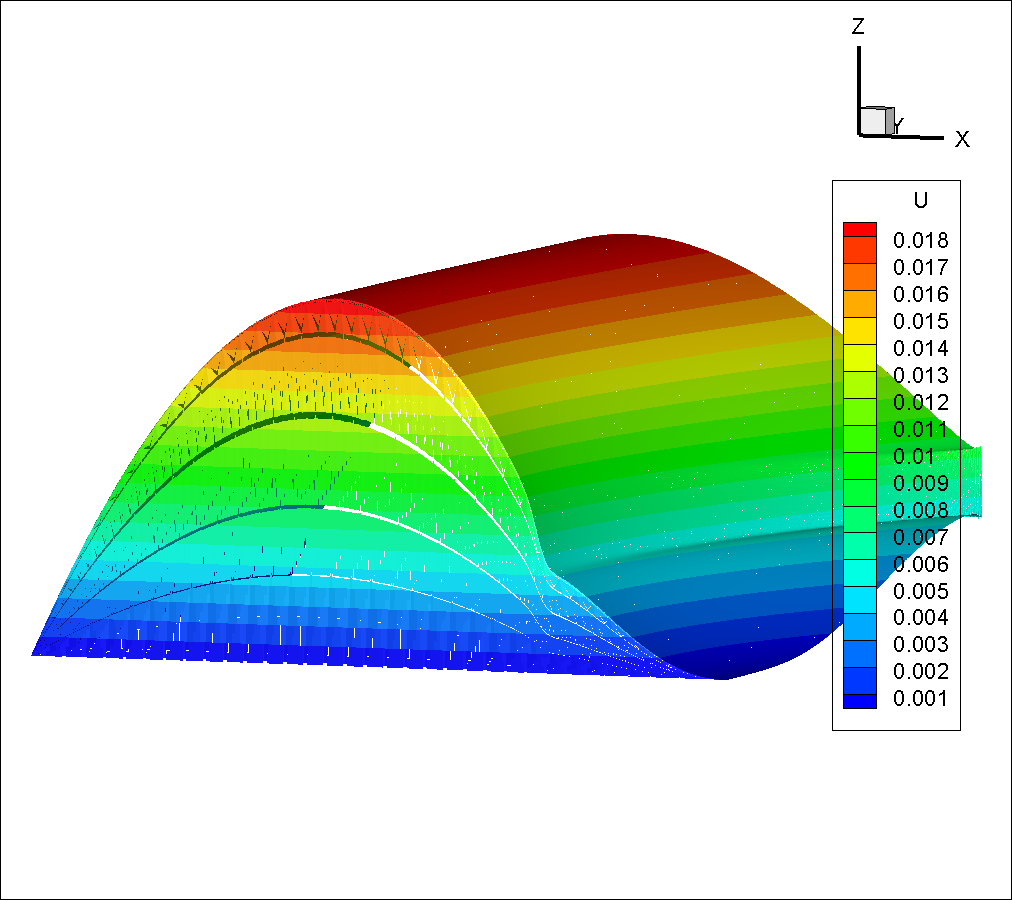}}
		\hbox{\includegraphics[width=2.5in]{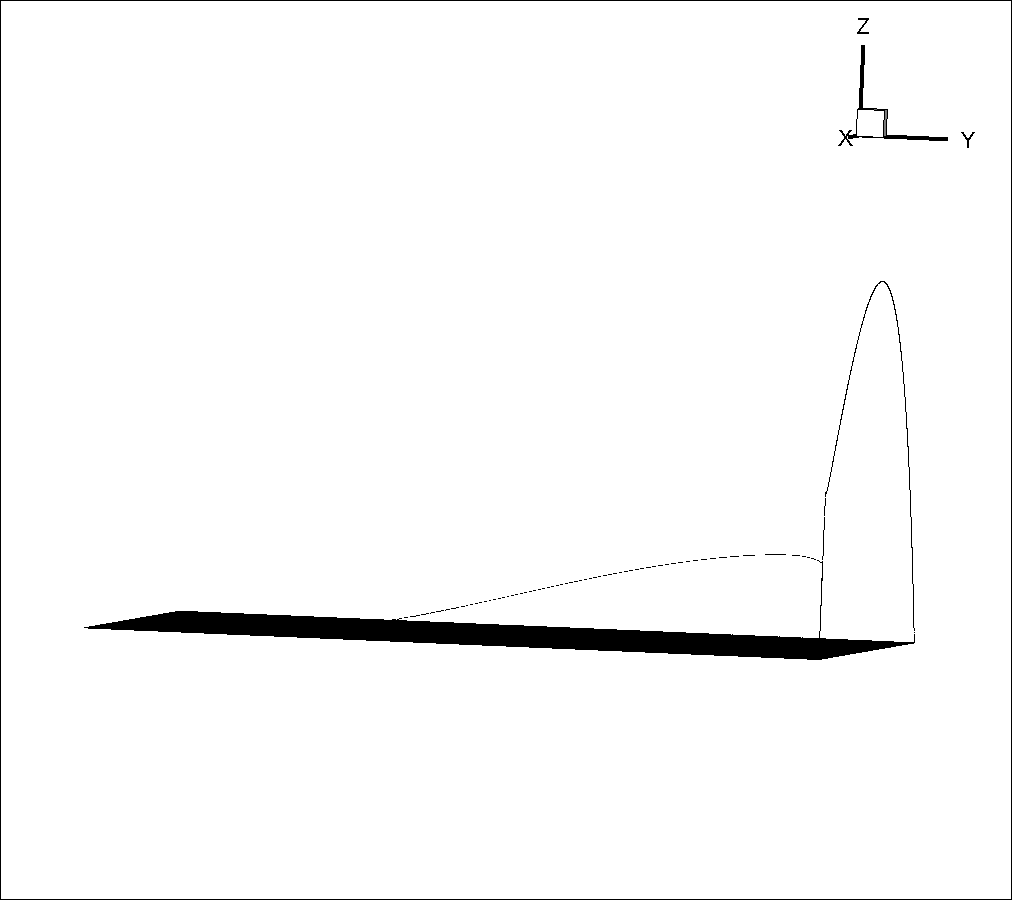}}
	}
	\caption{\label{fig:u5} Left is the state  $y_h$ and right is the control  $u_h$ for $\varepsilon=1/10000$.}
\end{figure}

\section{Conclusion}
In \cite{HuMateosSinglerZhangZhang1,HuMateosSinglerZhangZhang2}, we studied an HDG method for a  \emph{diffuison dominated} convection diffusion  Dirichlet boundary control problem. We obtained optimal convergence rates for the control under a high regularity assumption in \cite{HuMateosSinglerZhangZhang1} and a low regularity assumption in \cite{HuMateosSinglerZhangZhang2}. In this work, we considered a different HDG method with a lower computational cost for a {convection dominated} convection diffusion boundary control problem under high and low regularity conditions and again proved optimal convergence rates for the control. All existing numerical analysis work on Dirichlet boundary control problems have assumed the mesh is quasi-uniform; however, we do not need to have this assumption here.

To the best of our knowledge, this work is the only existing numerical analysis exploration of this convection dominated diffusion Dirichlet control problem.

\bibliographystyle{plain}
\bibliography{yangwen_ref_papers}

\end{document}